\documentclass[11pt]{article}

\usepackage[a4paper,margin=1in]{geometry}
\usepackage{amsmath,amssymb,amsfonts,amsthm}
\usepackage{mathtools}
\usepackage{bm}
\usepackage{booktabs}
\usepackage{graphicx}
\usepackage{xcolor}
\usepackage{listings}
\usepackage{hyperref}

\numberwithin{equation}{section}

\newtheorem{assumption}{Assumption}[section]
\newtheorem{lemma}{Lemma}[section]
\newtheorem{theorem}{Theorem}[section]
\newtheorem{remark}{Remark}[section]

\title{A Projected Drift-Randomized Milstein Method for SDEs with Non-differentiable and Super-linear Drift Coefficients}
\author{
	Shuai Wang\\[1mm]
	\small Financial Technology Thrust, Society Hub\\
	\small The Hong Kong University of Science and Technology (Guangzhou)\\[1mm]
	\small \href{mailto:swang767@connect.hkust-gz.edu.cn}
	{swang767@connect.hkust-gz.edu.cn}
}
\date{\today}

\begin{document}
	
	\maketitle
	
\begin{abstract}
	We propose a projected drift-randomized Milstein (PRM) method for stochastic
	differential equations with non-differentiable and super-linearly growing drift
	coefficients. The method extends the randomized Milstein approach beyond the globally Lipschitz setting by incorporating a drift projection into the randomized
	quadrature approximation. Moreover, unlike existing first-order Milstein-type
	methods for SDEs with super-linearly growing drift coefficients, the proposed
	method does not require spatial differentiability of the drift coefficient. Under
	suitable polynomial Lipschitz and one-sided Lipschitz conditions on the drift,
	together with standard regularity assumptions on the diffusion, we establish a
	one-step mean-square stability estimate and derive the required local residual
	bounds. These estimates yield first-order strong convergence of the PRM method
 in the \(L^2\)-sense. Numerical experiments confirm
	the theoretical convergence rate and demonstrate the applicability of the
	method to SDEs with non-differentiable and super-linearly growing drifts.
	
	\textbf{Keywords:}
	stochastic differential equations; projected drift-randomized Milstein method;
	non-differentiable drift; super-linearly growing drift;
	strong convergence.
\end{abstract}

	\section{Introduction}
	
Stochastic differential equations (SDEs) provide a fundamental framework for
modeling dynamical systems subject to random fluctuations, with applications
in finance, physics, biology, engineering, and many other fields
\cite{van2007stochastic,Grigoriu}. Since explicit solutions are rarely
available, the construction and analysis of numerical methods for SDEs have
received considerable attention; classical references include
\cite{kloeden1992numerical,milsbook}. For SDEs with globally Lipschitz coefficients, the Euler--Maruyama method is one
of the most basic and widely used numerical schemes. However, many models of
practical interest involve drift or diffusion coefficients with super-linear
growth. In this setting, the classical Euler--Maruyama method may fail to
converge and can even diverge in both the strong and weak senses
\cite{hutzenthaler2011strong}. This phenomenon has motivated the development of
various stabilized Euler-type methods for SDEs with non-globally Lipschitz
coefficients, including tamed Euler methods, truncated Euler methods, projected
Euler methods, and backward Euler methods; see, for example,
\cite{sabanis2013note,mao2015truncated,BeynKru,mao2013strong}. To achieve higher-order convergence, numerical schemes can be constructed from
the It\^o--Taylor expansion. Among them, the Milstein method is one of the most
widely studied and attains strong convergence of order one under suitable
regularity conditions. Stabilization techniques developed for Euler-type schemes have also been extended to Milstein-type methods with locally Lipschitz and super-linearly growing coefficients. Representative examples include the tamed Milstein method \cite{wang2013tamed}, the projected Milstein method
\cite{beyn2017stochastic}, and other related variants.

Randomized methods form a special class of numerical methods inspired by
Monte Carlo simulation. By introducing additional uniformly distributed random
variables into the numerical scheme, such methods may exhibit better
approximation properties than their deterministic counterparts in certain
situations. For instance, randomized Euler methods can be used to handle SDEs
with time-irregular or discontinuous coefficients, for which the classical
Euler algorithm may fail to converge to the exact solution
\cite{przybylowicz2014strong,przybylowicz2014optimality}. Moreover,
randomization techniques have also shown advantages in sampling and machine
learning algorithms; see, for example,
\cite{shen2019randomized,yu2024langevin,wang2025langevin}. Beyond randomized
Euler methods, a higher-order randomized Milstein method was proposed in
\cite{KruWu2019} for approximating SDEs with non-differentiable drift
coefficients. The key ingredient is a randomized quadrature rule for
H\"older-continuous stochastic processes. It was shown that this randomized
Milstein method can achieve a higher convergence rate than the classical
Milstein method in the \(L^p\)-sense. More precisely, when the drift coefficient is globally Lipschitz with respect
to the state variable and only \(1/2\)-H\"older continuous with respect to time,
corresponding to the case \(\kappa=0\) in Assumption~\ref{ass:drift}, the randomized
Milstein method in \cite{KruWu2019} achieves strong convergence of order one,
whereas the classical Milstein method generally attains only order \(1/2\). Besides applications to classical SDEs, \cite{biswas2024explicit}
extends this approach to McKean--Vlasov SDEs with non-differentiable drift
coefficients, \cite{przybylowicz2024randomized} studies randomized Milstein
algorithms for jump--diffusion SDEs, and \cite{morkisz2021randomized}
investigates randomized methods for SDEs under noisy information. However,
the above works are mainly restricted to SDEs with globally Lipschitz drift
coefficients. More recently, \cite{biswas2026randomized} extended the
randomized Milstein method to SDEs with super-linearly growing drift
coefficients by combining taming and randomization techniques. Although this
method can achieve order one convergence for SDEs with drift coefficients
that are \(1/2\)-H\"older continuous in time, it does not preserve another
important advantage of the randomized Milstein method in \cite{KruWu2019},
namely its applicability to non-differentiable drift coefficients, since
additional differentiability assumptions on the drift are required (see Remark~\ref{Remark:assu-compare} for more details).

In this paper, we consider SDEs whose drift coefficient is not necessarily
differentiable and may grow super-linearly in the state variable. Our aim is to
extend the randomized Milstein method of \cite{KruWu2019} to the non-globally
Lipschitz setting while retaining its applicability to non-differentiable drift
coefficients. To this end, we propose a novel projected drift-randomized Milstein (PRM) method. The projection is applied only to the drift coefficient in
order to control its super-linear growth, while the drift-randomized quadrature
rule is used to approximate the drift integral and accommodate its limited
regularity. The super-linear growth of the drift makes the convergence analysis
in \cite{KruWu2019} no longer directly applicable. We therefore develop a new
argument based on uniform moment estimates, one-step mean-square stability, and
local residual estimates. Under suitable polynomial Lipschitz and one-sided
Lipschitz conditions on the drift coefficient, we prove that the PRM method converges strongly with order one at the temporal grid points in the
\(L^2\)-sense. Compared with existing first-order Milstein-type methods for SDEs
with super-linearly growing drift coefficients, the PRM method does not
require spatial differentiability of the drift coefficient. To the best of our
knowledge, this is the first randomized Milstein-type method that achieves
first-order strong convergence for SDEs with drift coefficients that are both
non-differentiable and super-linearly growing. Numerical simulations are
provided to support the theoretical result.
	
The rest of this paper is organized as follows. 
Section~\ref{sec:pre} introduces the basic notation, standing assumptions, and
the projected drift-randomized Milstein scheme. 
Section~\ref{sec:moments} establishes moment bounds for the exact and numerical
solutions, together with several auxiliary estimates needed for the convergence
analysis. 
Section~\ref{sec:convergence} proves the main strong convergence result of the
proposed method. 
Finally, Section~\ref{sec:numerical-experiment} presents numerical experiments
to illustrate the theoretical findings.
\section{Preliminaries}\label{sec:pre}
\subsection{Notation}

Throughout this paper, \(T>0\) is fixed. Let
$
(\Omega_W,\mathcal F^W,(\mathcal F_t^W)_{t\in[0,T]},\mathbb P_W)
$
be a filtered probability space satisfying the usual conditions, on which an
\(m\)-dimensional Brownian motion
$
W=(W^1,\ldots,W^m)
$
is defined.
For \(x,y\in\mathbb R^d\), we denote by
$
\langle x,y\rangle
=
\sum_{i=1}^d x_i y_i
$
the Euclidean inner product and by
$
|x|
=
\sqrt{\langle x,x\rangle}
$
the corresponding Euclidean norm. For matrices, \(|\cdot|\) denotes the
Frobenius norm.

In order to introduce the drift randomization, let
$
(\Omega_\tau,\mathcal F^\tau,\mathbb P_\tau)
$
be another probability space on which a sequence
$
(\tau_j)_{j\ge1}
$
of independent random variables uniformly distributed on \((0,1)\) is defined.
We assume that \((\tau_j)_{j\ge1}\) is independent of the
Brownian motion \(W\). We work on the product probability space
$
(\Omega,\mathcal F,\mathbb P)
:=
(\Omega_W\times\Omega_\tau,
\mathcal F^W\otimes\mathcal F^\tau,
\mathbb P_W\otimes\mathbb P_\tau).
$
All random variables defined on \(\Omega_W\) or \(\Omega_\tau\) are naturally
identified with their extensions to the product space.

We denote by \(\mathbb E\), \(\mathbb E_W\), and \(\mathbb E_\tau\) the
expectations with respect to \(\mathbb P\), \(\mathbb P_W\), and
\(\mathbb P_\tau\), respectively. Thus, for every integrable random variable
\(Y\) on \(\Omega\), Fubini's theorem yields
\[
\mathbb E[Y]
=
\mathbb E_\tau\bigl[\mathbb E_W[Y]\bigr]
=
\mathbb E_W\bigl[\mathbb E_\tau[Y]\bigr].
\]
For \(p\ge1\), the space \(L^p(\Omega)\) consists of all
\(\mathbb R^d\)-valued random variables \(Y\) satisfying $
\mathbb E[|Y|^p]<\infty$, we write
\[
\|Y\|_{L^p(\Omega)}
:=
\left(\mathbb E\left[|Y|^p\right]\right)^{1/p}.
\]
When a random variable depends only on the Brownian motion, we may write
\[
\|Y\|_{L^p(\Omega_W)}
:=
\left(\mathbb E_W\left[|Y|^p\right]\right)^{1/p}.
\]
%All random variables defined on \(\Omega_W\) or \(\Omega_\tau\) are naturally
%identified with their extensions to the product space. For a random variable \(Y:\Omega\to\mathbb R^d\), we denote by
%\[
%\mathbb E[Y]:
%=
%\int_{\Omega}Y(\omega_W,\omega_\tau)\,
%d(\mathbb P_W\otimes\mathbb P_\tau)(\omega_W,\omega_\tau)
%\]
%the expectation on the product space. We also use the partial expectations
%\[
%\mathbb E_W[Y](\omega_\tau)
%:=
%\int_{\Omega_W}
%Y(\omega_W,\omega_\tau)\,d\mathbb P_W(\omega_W),
%\]
%and
%\[
%\mathbb E_\tau[Y](\omega_W)
%:=
%\int_{\Omega_\tau}
%Y(\omega_W,\omega_\tau)\,d\mathbb P_\tau(\omega_\tau).
%\]
%Thus, whenever the integrals are well-defined, Fubini's theorem gives
%\[
%\mathbb E[Y]
%=
%\mathbb E_\tau\bigl[\mathbb E_W[Y]\bigr]
%=
%\mathbb E_W\bigl[\mathbb E_\tau[Y]\bigr].
%\]

Let
$
\mathcal F_j^\tau:=\sigma(\tau_1,\ldots,\tau_j),
$
$
\mathcal F_0^\tau:=\{\emptyset,\Omega_\tau\}.
$
For the temporal grid
$
0=t_0<t_1<\cdots<t_N=T,
$
we define
$
\mathcal F_j^h
:=
\mathcal F_{t_j}^W\otimes\mathcal F_j^\tau,
 j=0,1,\ldots,N.
$
In particular, \(\mathcal F_j^h\) contains the Brownian information up to time
\(t_j\) and the randomization variables up to \(\tau_j\). We shall also use the
notation
\[
\mathbb E_j[\cdot]
:=
\mathbb E[\cdot\mid \mathcal F_j^h],
\qquad j=0,1,\ldots,N,
\]
whenever conditional expectations with respect to the discrete filtration are
needed. For \(0\le s<t\le T\), we use the notation
\[
I_{s,t}^{(r)}
:=
\int_s^t dW^r(u)
=
W^r(t)-W^r(s),
\qquad r=1,\ldots,m,
\]
and
\[
I_{s,t}^{(r_1,r_2)}
:=
\int_s^t\int_s^{u_1}
dW^{r_1}(u_2)\,dW^{r_2}(u_1),
\qquad r_1,r_2=1,\ldots,m.
\]
The generic constant \(C>0\) may change from line to line, but is always
independent of the step size and of the time index. Constants depending on
\(p\) or other parameters are denoted by \(C_p\), \(C_q\), and so on.
	
	In this paper, we consider the SDE
	\begin{equation}
		\label{eq:sde}
		dX(t)
		=
		f(t,X(t))\,dt
		+
		\sum_{r=1}^m g^r(t,X(t))\,dW^r(t),
		\qquad t\in[0,T],
	\end{equation}
	with initial condition
	\begin{equation}
		X(0)=X_0.
	\end{equation}
Here
$
f:[0,T]\times\mathbb R^d\to\mathbb R^d
$
is the drift coefficient, and
$
g^r:[0,T]\times\mathbb R^d\to\mathbb R^d,
r=1,\ldots,m,
$
are the diffusion coefficients. The solution of \eqref{eq:sde}, denoted by
$
X:[0,T]\times\Omega_W\to\mathbb R^d,
$
is adapted to the filtration \((\mathcal F_t^W)_{t\in[0,T]}\).

	For the Milstein coefficient, define,
for \(r_1,r_2=1,\ldots,m\), 
\begin{equation}\label{partial-priduct-gr}
g^{r_1,r_2}(t,x)
=
\partial_x g^{r_1}(t,x)\,g^{r_2}(t,x),	
\end{equation}
where \(\partial_x g^{r_1}(t,x)\) denotes the Jacobian matrix of
\(g^{r_1}\) with respect to the state variable \(x\). Thus
\(g^{r_1,r_2}(t,x)\in\mathbb R^d\). Equivalently, for \(i=1,\ldots,d\),
\[
\bigl(g^{r_1,r_2}(t,x)\bigr)^i
=
\sum_{\ell=1}^d
\frac{\partial g^{r_1,i}}{\partial x_\ell}(t,x)
g^{r_2,\ell}(t,x).
\]
		\subsection{Assumptions}
To complete our convergence analysis, we consider the following assumptions.
	\begin{assumption}[Initial value]
		\label{ass:X0}
		The initial value \(X_0\) is \(\mathcal F_0^W\)-measurable, independent of the
		pair \(\bigl(W,(\tau_j)_{j\ge1}\bigr)\), and satisfies
		\[
		X_0\in L^q(\Omega)
		\qquad\text{for every }q\ge1.
		\]
	\end{assumption}
	
	\begin{assumption}[Drift coefficient]
		\label{ass:drift}
		There exist constants \(K>0\) and \(\kappa\geq0\) such that, for all \(t\in[0,T]\) and \(x,y\in\mathbb R^d\)
		\begin{equation}
			\label{eq:drift-poly-lip}
			|f(t,x)-f(t,y)|
			\le
			K(1+|x|^\kappa+|y|^\kappa)|x-y|.
		\end{equation}
	 Moreover,
\begin{equation}\label{eq:drift-0}
		\sup_{t\in[0,T]}|f(t,0)|<\infty.
\end{equation}
		Furthermore, \(f\) is \(1/2\)-H\"older continuous in time:
		\begin{equation}
			\label{eq:drift-time-holder}
			|f(t,x)-f(s,x)|
			\le
			K(1+|x|^{\kappa+1})|t-s|^{1/2}.
		\end{equation}
		Finally, \(f\) satisfies the one-sided Lipschitz condition
		\begin{equation}
			\label{eq:drift-one-sided}
			\langle x-y,f(t,x)-f(t,y)\rangle
			\le
			K|x-y|^2.
		\end{equation}
	\end{assumption}
	\begin{assumption}[Diffusion coefficient]
		\label{ass:g}
		For each \(r=1,\dots,m\) and every fixed
		\(t\in[0,T]\), the mapping \(x\mapsto g^r(t,x)\) is continuously differentiable.
		Moreover, there exists a constant \(K>0\) such that, for all \(t\in[0,T]\) and \(x,y\in\mathbb R^d\)
\begin{equation}\label{eq:g0}
		\sup_{t\in[0,T]}|g^r(t,0)|<\infty.
\end{equation}
		\begin{equation}
			\label{eq:g-lip}
			|g^r(t,x)-g^r(t,y)|
			\le
			K|x-y|,
		\end{equation}
		\begin{equation}
			\label{eq:gx-bounded}
			|\partial_x g^r(t,x)|\le K,
		\end{equation}
		and
		\begin{equation}
			\label{eq:gx-lip}
			|\partial_x g^r(t,x)-\partial_x g^r(t,y)|
			\le
			K|x-y|
		\end{equation}
Furthermore, we assume
		\begin{equation}
			\label{eq:g12-lip}
			|g^{r_1,r_2}(t,x)-g^{r_1,r_2}(t,y)|
			\le
			K|x-y|.
		\end{equation}
 Finally, \(g^r\) is Lipschitz continuous in time:
		\begin{equation}
			\label{eq:g-time-lip}
			|g^r(t,x)-g^r(s,x)|
			\le
			K(1+|x|)|t-s|,
		\end{equation}
	\end{assumption}
%	\begin{remark}
%		The assumptions imposed here are closely related to those in the randomized
%		Milstein literature. Compared with the classical randomized Milstein method
%		\cite{BeynKru}, we replace the global Lipschitz condition on the drift
%		coefficient by a polynomial growth condition and a one-sided Lipschitz
%		condition, which allows for super-linear drift growth. Compared with the recent
%		randomized-tamed Milstein method \cite{biswas2026randomized}, our framework does
%		not require differentiability of the drift coefficient. In particular, no
%		condition of the form
%		\[
%		|\partial_x^2 f(t,x)|\le C(1+|x|^{r-1})
%		\]
%		is imposed. Hence the present method can be applied to SDEs with
%		non-differentiable drift coefficients. The time regularity assumptions are
%		chosen as \(1/2\)-H\"older continuity for the drift coefficient and Lipschitz
%		continuity for the diffusion coefficient, corresponding to the case
%		\(\gamma=1/2\) in \cite{BeynKru}, which is sufficient for the first-order
%		convergence result proved below.
%	\end{remark}
	\begin{remark}\label{Remark:assu-compare}
Compared with the classical randomized Milstein method in \cite{BeynKru}, the
global Lipschitz condition on the drift coefficient is replaced in this paper by
a polynomial growth condition together with a one-sided Lipschitz condition.
This allows us to treat drift coefficients with super-linear growth in the
state variable. On the other hand, the recent work \cite{biswas2026randomized}
requires an additional differentiability assumption on the drift coefficient,
for instance
$
|\partial_x^2 f(t,x)|\le C(1+|x|^{\kappa-1}),
$
which prevents its direct application to SDEs with non-differentiable drift
coefficients. This observation is one of the main motivations of the present
work. The remaining assumptions are in line with those in
\cite{BeynKru,biswas2026randomized}. In particular, we impose
\(1/2\)-H\"older continuity in time on the drift coefficient and Lipschitz
continuity in time on the diffusion coefficient, which corresponds to the case
\(\gamma=1/2\) in \cite{BeynKru}, in order to obtain the first-order convergence
rate.
	\end{remark}
	\begin{remark}
		\label{rem:one-sided-growth}
		The one-sided Lipschitz condition on the drift implies a useful coercivity-type
		estimate. Indeed, taking \(y=0\) in \eqref{eq:drift-one-sided} gives
$$
			\langle x,f(t,x)-f(t,0)\rangle
			\le
			K|x|^2 .
$$
		Hence
		\begin{equation}
			\label{eq:rem-xf-split}
		\langle	x,f(t,x)\rangle
			\le
			K|x|^2+\langle x ,f(t,0)\rangle.
		\end{equation}
 An application of \eqref{eq:drift-0} and Young's inequality gives
		\begin{equation}
			\label{eq:rem-xf-zero-bound}
			\langle x ,f(t,0)\rangle
			\le
			|x|\,|f(t,0)|
			\le
			C|x|
			\le
			C(1+|x|^2).
		\end{equation}
		Combining \eqref{eq:rem-xf-split} and \eqref{eq:rem-xf-zero-bound}, we obtain
		\begin{equation}
			\label{eq:rem-one-sided-drift-growth}
		\langle x ,f(t,x)\rangle
			\le
			C(1+|x|^2),
			\qquad t\in[0,T],\ x\in\mathbb R^d .
		\end{equation}
Moreover, by Assumptions~\ref{ass:drift} and \ref{ass:g}, we have the growth conditions for $f,g^r$
\begin{equation}
	\label{eq:drift-growth}
	|f(t,x)|\le C(1+|x|^{\kappa+1}).
\end{equation}	
\begin{equation}
	\label{eq:diffusion-growth}
	|g^r(t,x)|\le C(1+|x|), \quad r=1,\dots,m.
\end{equation}	
%		Moreover, the usual monotonicity inequality involving the
%		diffusion coefficient follows from \eqref{eq:drift-one-sided} and the global
%		Lipschitz continuity of \(g^r\) \eqref{eq:g-lip}. Indeed, for every fixed \(p\ge1\), we have
%		\begin{align}
%			\label{eq:rem-derived-monotonicity}
%			&\langle x-y,f(t,x)-f(t,y)\rangle
%			+
%			\frac{2p-1}{2}
%			\sum_{r=1}^m
%			|g^r(t,x)-g^r(t,y)|^2
%			\nonumber\\
%			&\quad\le
%			K|x-y|^2
%			+
%			\frac{2p-1}{2}mK^2|x-y|^2
%			\nonumber\\
%			&\quad\le
%			C_p|x-y|^2 .
%		\end{align}
%		Here \(C_p>0\) may depend on \(p\), but it is independent of \(t,x,y\) and
%		the step size. Thus the monotonicity condition does not need
%		to be imposed as a separate assumption.
	\end{remark}
	\begin{remark}
		If the diffusion coefficient is autonomous, namely \(g^r(t,x)=g^r(x)\), then the time regularity assumption \eqref{eq:g-time-lip} is automatically satisfied. This is the case in the numerical experiments in Section~\ref{sec:numerical-experiment}.
	\end{remark}
	\subsection{Projected drift-randomized Milstein method}
	
Recall the temporal grid
$
	0=t_0<t_1<\cdots<t_N=T
$
with step sizes
$
	h_j=t_j-t_{j-1},
	h=\max_{1\le j\le N}h_j.
$
	Throughout the analysis, we consider temporal grids satisfying
	\begin{equation}
		\label{eq:mesh-size-condition}
		0<h\le1.
	\end{equation}
	This entails \(0<h_j\le1\) for all \(j=1,\ldots,N\). Define the projection operator
	\begin{equation}
		\label{eq:projection}
		T_h(x)
		=
		x\min\left(1,\frac{h^{-\alpha}}{|x|}\right),
	\end{equation}
	with the convention that \(T_h(0)=0\) and 	
	\begin{equation}
		\label{eq:alpha}
		0<\alpha<\frac{1}{2(\kappa+1)}.
	\end{equation}
	Thus,
	\[
	T_h(x)=x
	\quad\text{if } |x|\le h^{-\alpha},
	\]
	and
	\[
	|T_h(x)|\le h^{-\alpha}.
	\]
	We define the projected drift
	\begin{equation}
		\label{eq:fh}
		f_h(t,x)=f(t,T_h(x)).
	\end{equation}
The proposed PRM method is defined as follows.
	Given \(X_h^{j-1}\), first compute the randomized internal stage
	\begin{equation}
		\label{eq:Xtau}
		X_h^{j,\tau}
		=
		X_h^{j-1}
		+
		\tau_jh_j f_{h_j}(t_{j-1},X_h^{j-1})
		+
		\sum_{r=1}^m
		g^r(t_{j-1},X_h^{j-1})
		I_{t_{j-1},\,t_{j-1}+\tau_jh_j}^{(r)}.
	\end{equation}
	Then define
	\begin{align}
		\label{eq:scheme}
		X_h^j
		=
		X_h^{j-1}
		&+
		h_j f_{h_j}
		\bigl(t_{j-1}+\tau_jh_j,X_h^{j,\tau}\bigr)+
		\sum_{r=1}^m
		g^r(t_{j-1},X_h^{j-1})
		I_{t_{j-1},t_j}^{(r)}
		\nonumber\\
		&+
		\sum_{r_1,r_2=1}^m
		g^{r_1,r_2}(t_{j-1},X_h^{j-1})
		I_{t_{j-1},t_j}^{(r_2,r_1)}.
	\end{align}
	The initial value is
$
	X_h^0=X_0.
$
	
%	\begin{remark}
%		The projection is used only inside the drift coefficient \(f_h=f\circ T_h\). The base point of the method remains \(X_h^{j-1}\). The diffusion and Milstein correction are evaluated at the unprojected point \(X_h^{j-1}\). This is essential for preserving the It\^o--Taylor structure of the Milstein method.
%	\end{remark}
	
	\section{Moment Bounds and Auxiliary Estimates}\label{sec:moments}
	
	\begin{lemma}[Moment bounds and temporal regularity of the exact solution]
		\label{lem:exact-moment}
		Let Assumptions~\ref{ass:X0}--\ref{ass:g} hold. Then the SDE
		\eqref{eq:sde} admits a unique solution \(X\). Moreover, for every \(q\ge2\),
		there exists a constant \(C_q>0\) such that
		\begin{equation}
			\label{eq:exact-moment-bound}
			\sup_{0\le t\le T}\mathbb E\left[|X(t)|^q\right]\le C_q .
		\end{equation}
		Furthermore, for every \(q\ge2\),
		\begin{equation}
			\label{eq:exact-holder-bound}
			\|X(t)-X(s)\|_{L^q(\Omega_W)}
			\le
			C_q |t-s|^{1/2},
			\qquad s,t\in[0,T].
		\end{equation}
	\end{lemma}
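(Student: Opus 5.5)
Existence and uniqueness follow from the classical theory for SDEs with locally Lipschitz and monotone coefficients (e.g.\ the results in \cite{mao2013strong} or Krylov's criterion). By \eqref{eq:drift-poly-lip} and \eqref{eq:g-lip}, both $f$ and $g^r$ are locally Lipschitz in $x$ uniformly in $t$, and they are measurable and locally bounded in $t$ by \eqref{eq:drift-growth} and \eqref{eq:diffusion-growth}. The one-sided condition \eqref{eq:drift-one-sided} combined with \eqref{eq:g-lip} gives the monotonicity inequality
\[
\langle x-y,f(t,x)-f(t,y)\rangle+\tfrac12\sum_{r}|g^r(t,x)-g^r(t,y)|^2\le C|x-y|^2 .
\]
Together with the coercivity estimate \eqref{eq:rem-one-sided-drift-growth} and \eqref{eq:diffusion-growth}, this yields
\[
\langle x,f(t,x)\rangle+\tfrac{q-1}{2}\sum_r|g^r(t,x)|^2\le C_q(1+|x|^2).
\]
Hence a unique global strong solution exists, by the standard localization argument: stopping times $\theta_R=\inf\{t:|X(t)|\ge R\}$ and a Lyapunov bound that rules out explosion.

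For the moment bound \eqref{eq:exact-moment-bound}, apply It\^o's formula to $|X(t\wedge\theta_R)|^q$ for $q\ge2$. The drift term is $q|X|^{q-2}\langle X,f\rangle$. The second-order terms are bounded by $\tfrac{q(q-1)}{2}|X|^{q-2}\sum_r|g^r|^2$. By the coercivity inequality above both are at most $C_q(1+|X|^q)$, and the stochastic integral has zero expectation up to $\theta_R$. Gronwall's lemma then gives $\mathbb E|X(t\wedge\theta_R)|^q\le C_q(1+\mathbb E|X_0|^q)$ uniformly in $R$, and Fatou's lemma with $R\to\infty$ yields \eqref{eq:exact-moment-bound}, using Assumption~\ref{ass:X0}. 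This step is routine. The only point to check is that the constant is independent of $R$, which holds because the bound uses only the one-sided growth and not the super-linear size of $f$.

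For the H\"older estimate \eqref{eq:exact-holder-bound}, take $s<t$ and write
\[
X(t)-X(s)=\int_s^t f(u,X(u))\,du+\sum_r\int_s^t g^r(u,X(u))\,dW^r(u).
\]
For the drift integral, use H\"older's inequality together with the polynomial growth \eqref{eq:drift-growth}:
\[
\Bigl\|\int_s^t f\,du\Bigr\|_{L^q}\le \int_s^t\|f(u,X(u))\|_{L^q}\,du\le C\,|t-s|\sup_u\bigl(1+\|X(u)\|_{L^{q(\kappa+1)}}^{\kappa+1}\bigr).
\]
Here the moment bound is used with the larger exponent $q(\kappa+1)$; this is the one place where having moments of all orders matters, since $f$ grows super-linearly. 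For the stochastic integral, apply the Burkholder--Davis--Gundy inequality and then \eqref{eq:diffusion-growth}, which give a bound $C_q|t-s|^{1/2}\sup_u(1+\|X(u)\|_{L^q})$. Since $|t-s|\le T$, we have $|t-s|\le T^{1/2}|t-s|^{1/2}$, and combining the two bounds gives \eqref{eq:exact-holder-bound}.

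The main obstacle, to the extent there is one, is the non-explosion and moment step under merely one-sided Lipschitz drift. It is handled by the stopping-time argument with Fatou's lemma, so I expect no genuine difficulty.
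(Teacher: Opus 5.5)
Your proposal is correct and follows essentially the same route as the paper. You use It\^o's formula for $|X|^q$ with the coercivity bound $\langle x,f(t,x)\rangle\le C(1+|x|^2)$ and the linear growth of $g^r$, then stopping-time localization and Gronwall for the moments. For the temporal regularity you split into drift and diffusion parts, using H\"older's inequality with moments of order $q(\kappa+1)$ for the drift and the Burkholder--Davis--Gundy inequality for the stochastic integral. The only difference is that you spell out the monotonicity condition for well-posedness, the localization, and the Fatou step in more detail than the paper does.
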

	
	\begin{proof}
		Existence and uniqueness follow from the
		polynomial growth condition \eqref{eq:drift-poly-lip}, the one-sided Lipschitz condition
		\eqref{eq:drift-one-sided}, and the global Lipschitz of the
		diffusion coefficient \eqref{eq:g-lip}.
		
		We prove the moment estimate. Fix \(q\ge2\). By It\^o's formula,
		\begin{align}
			\label{eq:exact-ito-q}
	|X(t)|^q
	&=
	|X_0|^q
	+
	q\int_0^t
	|X(s)|^{q-2}
	\langle X(s),f(s,X(s))\rangle\,ds
	\nonumber\\
	&\quad+
	\frac q2
	\sum_{r=1}^m
	\int_0^t
	|X(s)|^{q-2}|g^r(s,X(s))|^2\,ds
	\nonumber\\
	&\quad+
	\frac{q(q-2)}2
	\sum_{r=1}^m
	\int_0^t
	|X(s)|^{q-4}
	\left|
	\left\langle X(s),g^r(s,X(s))\right\rangle
	\right|^2\,ds
	\nonumber\\
	&\quad+
	q\sum_{r=1}^m
	\int_0^t
	|X(s)|^{q-2}
	\left\langle X(s),g^r(s,X(s))\right\rangle
	\,dW^r(s).
\end{align}
		By \eqref{eq:rem-one-sided-drift-growth},
		\[
		\langle X(s),f(s,X(s))\rangle\le C(1+|X(s)|^2).
		\]
		Moreover, by \eqref{eq:diffusion-growth}, we have
		\[
		|g^r(s,x)|^2\le C(1+|x|^2).
		\]
		Therefore, after taking expectations in \eqref{eq:exact-ito-q}, the martingale
		term vanishes and we obtain
		\[
		\mathbb E\left[|X(t)|^q\right]
		\le
		\mathbb E\left[|X_0|^q\right]
		+
		C_q\int_0^t
		\bigl(1+\mathbb E\left[|X(s)|^q\right]\bigr)\,ds .
		\]
	A standard stopping time truncation and	Gronwall's inequality gives \eqref{eq:exact-moment-bound}.
		
		For temporal regularity, let \(0\le s<t\le T\). From the integral form of
		\eqref{eq:sde},
		\[
		X(t)-X(s)
		=
		\int_s^t f(u,X(u))\,du
		+
		\sum_{r=1}^m
		\int_s^t g^r(u,X(u))\,dW^r(u).
		\]
		Using the growth bound \eqref{eq:drift-growth}, H\"older's inequality, and
		\eqref{eq:exact-moment-bound} with moment order \(q(\kappa+1)\), we obtain
		\[
		\left\|
		\int_s^t f(u,X(u))\,du
		\right\|_{L^q(\Omega_W)}
		\le
		\int_s^t
		\|f(u,X(u))\|_{L^q(\Omega_W)}\,du
		\le
		C_q|t-s|.
		\]
		By the Burkholder--Davis--Gundy inequality and the linear growth of \(g^r\),
		\[
		\left\|
		\sum_{r=1}^m
		\int_s^t g^r(u,X(u))\,dW^r(u)
		\right\|_{L^q(\Omega_W)}
		\le
		C_q
		\left(
		\int_s^t
		\bigl(1+\|X(u)\|_{L^q(\Omega_W)}^2\bigr)\,du
		\right)^{1/2}
		\le
		C_q|t-s|^{1/2}.
		\]
		Combining the last two estimates yields \eqref{eq:exact-holder-bound}.
	\end{proof}
	
	\begin{lemma}[Projection properties and errors]
		\label{lem:projection-error}
 For the projection operator
		\(T_h\) defined by \eqref{eq:projection}, the following properties hold for all
		\(x,y\in\mathbb R^d\):
		\[
		|T_h(x)|\le |x|,
		\qquad
		|T_h(x)|\le h^{-\alpha},
		\]
		and the non-expansiveness property
		\begin{equation}\label{eq:Th-nonexpansive}
			|T_h(x)-T_h(y)|\le |x-y|.
		\end{equation}
		
		Moreover, let \(\ell>2(\kappa+1)\) and let
		\(Z\in L^\ell(\Omega)\). Then there exists a constant
		\(C_\ell>0\), depending on \(\ell\), \(\|Z\|_{L^\ell(\Omega)}\),
		and the coefficients, but independent of \(h\) and \(t\), such that
		\begin{equation}
			\label{eq:projection-state-error-new}
			\|Z-T_h(Z)\|_{L^2(\Omega)}
			\le
			C_\ell h^{\frac{\alpha}{2}(\ell-2)}.
		\end{equation}
		Furthermore, for all \(t\in[0,T]\),
		\begin{equation}
			\label{eq:projection-drift-error-new}
			\|f(t,Z)-f_h(t,Z)\|_{L^2(\Omega)}
			\le
			C_\ell h^{\frac{\alpha}{2}(\ell-2(\kappa+1))}.
		\end{equation}
		In particular, if
		\begin{equation}
			\label{eq:ell-large-enough-new}
			\ell\ge 2(\kappa+1)+\frac{2}{\alpha},
		\end{equation}
		then
		\begin{equation}
			\label{eq:projection-drift-error-order-one-new}
			\|f(t,Z)-f_h(t,Z)\|_{L^2(\Omega)}
			\le
			C_\ell h.
		\end{equation}
	\end{lemma}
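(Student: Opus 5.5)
The three deterministic properties follow from the definition \eqref{eq:projection}. The vector $T_h(x)$ is $x$ multiplied by the scalar $\min(1,h^{-\alpha}/|x|)\le 1$, which gives $|T_h(x)|\le|x|$. It also gives $|T_h(x)|=\min(|x|,h^{-\alpha})\le h^{-\alpha}$. For non-expansiveness \eqref{eq:Th-nonexpansive}, note that $T_h$ is the metric projection onto the closed convex ball $B:=\{z:|z|\le h^{-\alpha}\}$: for $|x|>h^{-\alpha}$ the closest point of $B$ to $x$ is $h^{-\alpha}x/|x|$. The Euclidean projection onto a closed convex set is $1$-Lipschitz. To keep the proof self-contained, I would recall the variational inequality $\langle x-T_h(x),z-T_h(x)\rangle\le0$ for all $z\in B$. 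Applying it at $x$ with $z=T_h(y)$, and at $y$ with $z=T_h(x)$, and adding the two gives $|T_h(x)-T_h(y)|^2\le\langle x-y,T_h(x)-T_h(y)\rangle$. Cauchy--Schwarz then yields the claim.

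For \eqref{eq:projection-state-error-new}, the key observation is that $Z-T_h(Z)=0$ on $\{|Z|\le h^{-\alpha}\}$, and $|Z-T_h(Z)|\le|Z|$ everywhere. Hence
\[
\mathbb E\bigl[|Z-T_h(Z)|^2\bigr]\le \mathbb E\bigl[|Z|^2\mathbf 1_{\{|Z|>h^{-\alpha}\}}\bigr]\le \mathbb E\bigl[|Z|^\ell\bigr]\,h^{\alpha(\ell-2)},
\]
using $|Z|^2\le |Z|^\ell h^{\alpha(\ell-2)}$ on $\{|Z|>h^{-\alpha}\}$ (a Markov-type bound). Taking square roots gives the rate $h^{\frac\alpha2(\ell-2)}$ with $C_\ell=\|Z\|_{L^\ell}^{\ell/2}$.

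For the drift error, again $f(t,Z)-f_h(t,Z)$ vanishes on $\{|Z|\le h^{-\alpha}\}$. On the complement, \eqref{eq:drift-poly-lip} together with $|T_h(Z)|\le|Z|$ gives
\[
|f(t,Z)-f(t,T_h(Z))|\le K(1+2|Z|^\kappa)|Z-T_h(Z)|\le C(1+|Z|^{\kappa+1}).
\]
On $\{|Z|>h^{-\alpha}\}$ we have $|Z|\ge1$, since $h\le1$, so this is at most $C|Z|^{\kappa+1}$. Therefore
\[
\mathbb E\bigl[|f-f_h|^2\bigr]\le C\,\mathbb E\bigl[|Z|^{2(\kappa+1)}\mathbf 1_{\{|Z|>h^{-\alpha}\}}\bigr]\le C\,\mathbb E\bigl[|Z|^\ell\bigr]\,h^{\alpha(\ell-2(\kappa+1))}.
\]
This uses $\ell>2(\kappa+1)$, and the constant is uniform in $t$ because $K$ is. Taking square roots gives \eqref{eq:projection-drift-error-new}. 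Under \eqref{eq:ell-large-enough-new} the exponent satisfies $\frac\alpha2(\ell-2(\kappa+1))\ge1$, and $h\le1$ gives \eqref{eq:projection-drift-error-order-one-new}.

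The only genuinely nontrivial step is the non-expansiveness. If one prefers to avoid convex-analysis facts, the fallback is a direct case analysis. The case where both points are inside $B$ is trivial. When one point is inside and one outside, or both are outside, one expands $|ax-by|^2$ with scalars $a,b\le1$ and uses $\langle x,y\rangle\le|x||y|$. The moment estimates are routine once the indicator trick is in place.
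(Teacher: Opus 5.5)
Your proof is correct. The two moment estimates follow the paper exactly: the indicator bound $|Z-T_h(Z)|\le|Z|\mathbf 1_{\{|Z|>h^{-\alpha}\}}$, then $|Z|^{2}\le h^{\alpha(\ell-2)}|Z|^{\ell}$ (resp.\ $|Z|^{2(\kappa+1)}\le h^{\alpha(\ell-2(\kappa+1))}|Z|^\ell$) on that event.

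The genuine difference is in the non-expansiveness. The paper proves it by a direct three-case computation, with $R_h:=h^{-\alpha}$:
\begin{itemize}
\item Both points inside the ball: trivial.
\item One point inside and one outside: it factors $|x-y|^2-|x-R_hy/|y||^2=(|y|-R_h)(|y|+R_h-2\langle x,y/|y|\rangle)\ge0$.
\item Both points outside: it writes the difference as $(|x|-|y|)^2+2(|x||y|-R_h^2)(1-\langle x/|x|,y/|y|\rangle)\ge0$.
\end{itemize}
You instead identify $T_h$ as the metric projection onto the closed ball and add the two variational inequalities.

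Your route is shorter and works verbatim for projection onto any closed convex set. It also yields the stronger firm non-expansiveness $|T_h(x)-T_h(y)|^2\le\langle x-y,T_h(x)-T_h(y)\rangle$. The paper's route is fully elementary and needs no convex-analysis input.

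To make your version equally self-contained, the variational inequality for the ball is a one-line check. For $|x|\le R_h$ the left side is zero. For $|x|>R_h$ and $|z|\le R_h$,
\[
\langle x-T_h(x),z-T_h(x)\rangle=(|x|-R_h)\bigl(\langle x/|x|,z\rangle-R_h\bigr)\le(|x|-R_h)(|z|-R_h)\le0 .
\]
With that line added, your case-analysis fallback becomes unnecessary.
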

	
	\begin{proof}
		Let
		\[
		R_h:=h^{-\alpha}.
		\]
		By the definition of \(T_h\), we have
		\[
		T_h(x)=x, \qquad \text{if } |x|\le R_h,
		\]
		and
		\[
		T_h(x)=R_h\frac{x}{|x|}, \qquad \text{if } |x|>R_h.
		\]
		Therefore,
		\[
		|T_h(x)|\le |x|,
		\qquad
		|T_h(x)|\le R_h=h^{-\alpha}.
		\]
		
We next prove the non-expansiveness of \(T_h\). We distinguish three cases.

If \(|x|\le R_h\) and \(|y|\le R_h\), then \(T_h(x)=x\) and \(T_h(y)=y\).
Hence
\[
|T_h(x)-T_h(y)|=|x-y|.
\]

If \(|x|\le R_h<|y|\), then \(T_h(x)=x\) and
\[
T_h(y)=R_h\frac{y}{|y|}.
\]
Therefore,
\[
\begin{aligned}
	&|x-y|^2-
	\left|
	x-R_h\frac{y}{|y|}
	\right|^2
	\\
	& =
	|y|^2-R_h^2
	-
	2(|y|-R_h)
	\left\langle x,\frac{y}{|y|}\right\rangle
	\\
	& =
	(|y|-R_h)
	\left(
	|y|+R_h
	-
	2\left\langle x,\frac{y}{|y|}\right\rangle
	\right)
	\\
	& \ge
	(|y|-R_h)
	\left(
	|y|+R_h-2|x|
	\right)
	\\
	& \ge 0.
\end{aligned}
\]
Thus
\[
|T_h(x)-T_h(y)|
=
\left|
x-R_h\frac{y}{|y|}
\right|
\le
|x-y|.
\]
The case \(|y|\le R_h<|x|\) is identical.

Finally, suppose that \(|x|>R_h\) and \(|y|>R_h\). Then
\[
T_h(x)=R_h\frac{x}{|x|},
\qquad
T_h(y)=R_h\frac{y}{|y|}.
\]
Hence
\[
\begin{aligned}
	&|x-y|^2-
	\left|
	R_h\frac{x}{|x|}
	-
	R_h\frac{y}{|y|}
	\right|^2
	\\
	& =
	|x|^2+|y|^2-2\langle x,y\rangle
	-
	R_h^2
	\left(
	2-2\left\langle\frac{x}{|x|},\frac{y}{|y|}\right\rangle
	\right)
	\\
	& =
	\bigl(|x|-|y|\bigr)^2
	+
	2\left(|x||y|-R_h^2\right)
	\left(
	1-\left\langle\frac{x}{|x|},\frac{y}{|y|}\right\rangle
	\right)
	\\
	& \ge 0.
\end{aligned}
\]
Therefore,
\[
|T_h(x)-T_h(y)|\le |x-y|.
\]
Combining the above cases gives
\[
|T_h(x)-T_h(y)|\le |x-y|,
\]
which proves \eqref{eq:Th-nonexpansive}.
		
		We now prove the projection error estimates. Since \(T_h(Z)=Z\) on
		\(\{|Z|\le h^{-\alpha}\}\), we have
		\begin{equation}
			\label{eq:proj-diff-pointwise}
			|Z-T_h(Z)|
			\le
			|Z|\mathbf 1_{\{|Z|>h^{-\alpha}\}}.
		\end{equation}
		Indeed, on the event \(\{|Z|>h^{-\alpha}\}\),
		\[
		T_h(Z)=h^{-\alpha}\frac{Z}{|Z|},
		\qquad
		|Z-T_h(Z)|=|Z|-h^{-\alpha}\le |Z|.
		\]
		Therefore,
		\begin{align}
			\mathbb E\left[|Z-T_h(Z)|^2\right]
			&\le
			\mathbb E\left[
			|Z|^2\mathbf 1_{\{|Z|>h^{-\alpha}\}}
			\right]
			\nonumber\\
			&\le
			h^{\alpha(\ell-2)}\mathbb E\left[|Z|^\ell \right].
			\nonumber
		\end{align}
		Taking square roots gives \eqref{eq:projection-state-error-new}.
		
		Next, by \eqref{eq:drift-poly-lip},
		\[
		|f(t,Z)-f(t,T_h(Z))|
		\le
		C(1+|Z|^\kappa+|T_h(Z)|^\kappa)|Z-T_h(Z)|.
		\]
		Since \(|T_h(Z)|\le |Z|\), by \eqref{eq:proj-diff-pointwise},
		\[
		|f(t,Z)-f_h(t,Z)|
		\le
		C(1+|Z|^\kappa)|Z|\mathbf 1_{\{|Z|>h^{-\alpha}\}}.
		\]
		Since \(0<h\le1\), we have \(h^{-\alpha}\ge1\). Hence, on
		\(\{|Z|>h^{-\alpha}\}\),
		\[
		(1+|Z|^\kappa)|Z|
		\le
		C|Z|^{\kappa+1}.
		\]
		Thus
		\[
		|f(t,Z)-f_h(t,Z)|
		\le
		C|Z|^{\kappa+1}\mathbf 1_{\{|Z|>h^{-\alpha}\}}.
		\]
		Consequently,
		\begin{align}
			\|f(t,Z)-f_h(t,Z)\|_{L^2(\Omega)}^2
			&\le
			C\mathbb E\left[
			|Z|^{2(\kappa+1)}
			\mathbf 1_{\{|Z|>h^{-\alpha}\}}
			\right]
			\nonumber\\
			&\le
			C h^{\alpha(\ell-2(\kappa+1))}
			\mathbb E|Z|^\ell .
			\nonumber
		\end{align}
		Here we used \(\ell>2(\kappa+1)\). Taking square roots proves
		\eqref{eq:projection-drift-error-new}.
		
		Finally, if \eqref{eq:ell-large-enough-new} holds, then
		\[
		\frac{\alpha}{2}\bigl(\ell-2(\kappa+1)\bigr)\ge1.
		\]
		Therefore, since \(0<h\le1\),
		\[
		h^{\frac{\alpha}{2}(\ell-2(\kappa+1))}
		\le h,
		\]
		and hence \eqref{eq:projection-drift-error-order-one-new} follows.
	\end{proof}
	
	\begin{lemma}[Moment bounds of the numerical solution]
		\label{lem:numerical-moment}
		Let \(X_h^j\) be generated by the projected drift-randomized Milstein method. Then, for every \(p\ge1\), there exists a constant \(C_p>0\), independent of
		\(h\) and \(j\), such that
		\begin{equation}
			\label{eq:lem43-moment-bound}
			\sup_{0\le j\le N}\mathbb E\left[|X_h^j|^{2p}\right]\le C_p.
		\end{equation}
	\end{lemma}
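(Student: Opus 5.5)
We will prove the bound by a one-step conditional moment recursion for $|X_h^j|^{2p}$, in the spirit of the projected Euler/Milstein analysis of \cite{BeynKru,beyn2017stochastic}. The projection makes every drift evaluation small relative to the step: by Lemma~\ref{lem:projection-error}, $|T_h(x)|\le h^{-\alpha}$, and by \eqref{eq:drift-growth},
\[
|f_{h_j}(t,x)|\le C(1+|T_{h_j}(x)|^{\kappa+1})\le C\bigl(1+\min(|x|,h_j^{-\alpha})^{\kappa+1}\bigr).
\]
Since $\alpha(\kappa+1)<1/2$ by \eqref{eq:alpha}, this gives $h_j|f_{h_j}(t,x)|\le Ch_j^{1/2}$ uniformly in $x$. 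We also have the linear-growth bound $h_j|f_{h_j}(t,x)|\le C h_j^{1-\alpha\kappa}(1+|x|)$. These two bounds replace the global Lipschitz property used in \cite{KruWu2019}.

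\textbf{Step 1 (decomposition).} Write
\[
X_h^j=X_h^{j-1}+h_jf_{h_j}(t_{j-1},X_h^{j-1})+\Delta M_j+\rho_j .
\]
Here $\Delta M_j$ collects the diffusion and Milstein increments $\sum_r g^r I^{(r)}+\sum g^{r_1,r_2}I^{(r_2,r_1)}$ evaluated at $X_h^{j-1}$. These are conditionally mean zero given $\mathcal F_{j-1}^h$, and their conditional $L^q$ norms are bounded by $C_q h_j^{1/2}(1+|X_h^{j-1}|)$, using \eqref{eq:diffusion-growth} and the linear growth of $g^{r_1,r_2}$ implied by \eqref{eq:gx-bounded}. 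The remaining term is
\[
\rho_j:=h_j\bigl(f_{h_j}(t_{j-1}+\tau_jh_j,X_h^{j,\tau})-f_{h_j}(t_{j-1},X_h^{j-1})\bigr).
\]
We control $\rho_j$ with \eqref{eq:drift-poly-lip} together with the non-expansiveness \eqref{eq:Th-nonexpansive}. The polynomial factor is evaluated at projected points, so it is at most $C h_j^{-\alpha\kappa}$. The time part uses \eqref{eq:drift-time-holder}, again at a projected point, and contributes $Ch_j^{3/2}h_j^{-\alpha(\kappa+1)}$. The stage difference satisfies
\[
|X_h^{j,\tau}-X_h^{j-1}|\le \tau_jh_j|f_{h_j}|+\Bigl|\sum_r g^r I^{(r)}_{t_{j-1},t_{j-1}+\tau_jh_j}\Bigr| .
\]
Altogether,
\[
|\rho_j|\le C h_j^{1-\alpha\kappa}\bigl(h_j^{1/2}+|\textstyle\sum_r g^rI^{(r)}_{\tau}|\bigr)+Ch_j^{3/2-\alpha(\kappa+1)} ,
\]
so $\|\rho_j\|_{L^q}$ is of order $h_j^{3/2-\alpha(\kappa+1)}(1+\|X_h^{j-1}\|_{L^q})$. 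Since $3/2-\alpha(\kappa+1)>1$, this is $o(h_j)$ times a linear factor.

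\textbf{Step 2 (expanding the $2p$-th power).} We first compute $|X_h^j|^2=|X_h^{j-1}|^2+2\langle X_h^{j-1},h_jf_{h_j}(t_{j-1},X_h^{j-1})\rangle+\dots$. The crucial term is $\langle x,f(t,T_h(x))\rangle$. Write $x=\lambda T_h(x)$ with $\lambda=|x|/|T_h(x)|\ge1$ when $|x|>h^{-\alpha}$. Then
\[
\langle x,f(t,T_hx)\rangle=\lambda\langle T_hx,f(t,T_hx)\rangle\le \lambda C(1+|T_hx|^2)\le C(1+|x|^2)
\]
by \eqref{eq:rem-one-sided-drift-growth}, using $\lambda|T_hx|^2\le|x|^2$ and $\lambda\le|x|$ (recall $h\le1$). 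The square $h_j^2|f_{h_j}|^2\le Ch_j^{2-2\alpha(\kappa+1)}(1+|x|^2)\le Ch_j(1+|x|^2)$ is harmless. The remaining cross terms $\langle X,\Delta M_j\rangle$ and $\langle h_jf,\Delta M_j\rangle$ either have zero conditional mean or are bounded by $Ch_j(1+|X|^2)$ times conditional moments of the Brownian increments. Thus
\[
|X_h^j|^2=|X_h^{j-1}|^2(1+\xi_j)+\eta_j ,
\]
with conditional moments of $\xi_j,\eta_j$ of the form $\mathbb E_{j-1}[\xi_j]\le Ch_j$ and $\mathbb E_{j-1}|\xi_j|^k\le C_kh_j^{\min(k/2,\,\cdot)}$. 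We then raise this identity to the power $p$ and use the binomial expansion with conditional expectations $\mathbb E_{j-1}$ (the $\tau_j$ and Brownian increments on $(t_{j-1},t_j]$ being independent of $\mathcal F^h_{j-1}$). The first-order terms contribute $Ch_j(1+|X_h^{j-1}|^{2p})$. Higher-order terms involve $k\ge2$ factors of size $h_j^{1/2}$ each, hence are $O(h_j)$, but they may carry extra powers of $h_j^{-\alpha\kappa}$ from the drift. Here \eqref{eq:alpha} is used again to absorb these powers. This yields
\[
\mathbb E|X_h^j|^{2p}\le(1+Ch_j)\mathbb E|X_h^{j-1}|^{2p}+Ch_j .
\]

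\textbf{Step 3.} The discrete Gronwall inequality together with Assumption~\ref{ass:X0} gives \eqref{eq:lem43-moment-bound}.

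\textbf{Main obstacle.} The hardest part is Step 2, the rigorous bookkeeping of the cross terms involving $\rho_j$ in the $p$-th power expansion. The term $\langle X_h^{j-1},\rho_j\rangle$ has no conditional mean-zero structure, because $\tau_j$ enters nonlinearly; so it must be bounded directly by $|X|\,|\rho_j|$. Doing so yields $h_j^{1-\alpha\kappa}|X|\,|\sum_r g^r I_\tau^{(r)}|$, whose conditional expectation is only $O(h_j^{3/2-\alpha\kappa})(1+|X|^2)$. This is acceptable precisely because $\alpha\kappa<1/2$. I would try this direct bound first. If it fails for large $p$, the fallback is the stopping-set argument of \cite{BeynKru}: consider separately the events $\{|X_h^{j-1}|\le h^{-\alpha}\}$ and its complement, and on the latter use the bounded drift $h_j|f_{h_j}|\le Ch_j^{1/2}$.
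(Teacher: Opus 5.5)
Your argument is correct, but it handles the stage-evaluated drift by a different mechanism than the paper.

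\textbf{Your route.} You write $h_jf_{h_j}(t_{j,\tau},X_h^{j,\tau})$ as the left-point drift plus a perturbation $\rho_j$. You apply the projected one-sided growth bound at the $\mathcal F_{j-1}^h$-measurable point $X_h^{j-1}$, and you control $\rho_j$ through \eqref{eq:drift-poly-lip} at projected points, \eqref{eq:Th-nonexpansive} and \eqref{eq:drift-time-holder}. Since $3/2-\alpha\kappa$ and $3/2-\alpha(\kappa+1)$ both exceed one by \eqref{eq:alpha}, the direct bound $|\langle X_h^{j-1},\rho_j\rangle|\le|X_h^{j-1}|\,|\rho_j|$ works for every $p$. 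The stopping-set fallback is therefore never needed.

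\textbf{The paper's route.} The paper never compares the drift at two points. It applies the one-sided growth bound at the stage itself, $\langle V_j,F_j\rangle\le C(1+|V_j|^2)$ with $V_j=X_h^{j,\tau}$ and $F_j=f_{h_j}(t_{j,\tau},V_j)$, and uses $\mathbb E_{j-1}|V_j|^2\le C(1+|X_h^{j-1}|^2)$. It pays for the change of base point only through $\langle V_j-X_h^{j-1},F_j\rangle$. This term is estimated with the crude bound $|F_j|\le Ch_j^{-\alpha(\kappa+1)}$ together with $\mathbb E_{j-1}|V_j-X_h^{j-1}|\le Ch_j^{1/2}(1+|X_h^{j-1}|)$.

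\textbf{Comparison.} On the drift side the paper needs only \eqref{eq:drift-growth} and \eqref{eq:rem-one-sided-drift-growth}. Your version uses more of Assumption~\ref{ass:drift}, in exchange for isolating the randomization as a conditionally small perturbation of a projected Euler--Milstein step. The paper also avoids your two-stage ``square, then binomial'' bookkeeping by using the Taylor-type inequality $|x+y|^{2p}\le|x|^{2p}+2p|x|^{2p-2}\langle x,y\rangle+C_p(|x|^{2p-2}|y|^2+|y|^{2p})$, which is valid for all real $p\ge1$.

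\textbf{If you keep your route.} Make Step~2 explicit. For example, with $A_j:=|X_h^j|^2-|X_h^{j-1}|^2$, show $\mathbb E_{j-1}[A_j]\le Ch_j(1+|X_h^{j-1}|^2)$ and $\mathbb E_{j-1}|A_j|^k\le C_kh_j(1+|X_h^{j-1}|^{2k})$ for $k\ge2$. Do this for integer $p$ first and use Lyapunov's inequality for the remaining $p$. Also note that the loss $h_j^{-\alpha\kappa}$ or $h_j^{-\alpha(\kappa+1)}$ always comes with a full factor $h_j$. A factor of size $h_j^{1/2-\alpha\kappa}$, as your wording in Step~2 suggests, could not be absorbed in products of two.
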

	
	\begin{proof}
Fix \(p\ge1\). We first estimate the internal stage \eqref{eq:Xtau}.
Recall that
\[
\mathcal F_j^h
:=
\mathcal F_{t_j}^W\otimes\mathcal F_j^\tau,
\qquad
\mathbb E_j[\cdot]
:=
\mathbb E[\cdot\mid \mathcal F_j^h],
\qquad j=0,1,\ldots,N .
\]
At the \(j\)-th step, set
\[
Y_j:=X_h^{j-1},
\qquad
V_j:=X_h^{j,\tau}.
\]
Then \(Y_j\) is \(\mathcal F_{j-1}^h\)-measurable and is independent of
\(\tau_j\). Let
\[
a:=\alpha(\kappa+1).
\]
Since \(0<\alpha<1/(2(\kappa+1))\), we have
\[
0<a<\frac12.
\]
We also use the following bound for the projected drift. Since
\(|T_{h_j}(x)|\le h_j^{-\alpha}\), the polynomial growth condition
\eqref{eq:drift-growth} gives
\begin{align}
	\label{eq:lem43-fh-bound-raw}
	|f_{h_j}(t,x)|
	&=
	|f(t,T_{h_j}(x))|
\le
	C\left(1+|T_{h_j}(x)|^{\kappa+1}\right)
\le
	C\left(1+h_j^{-\alpha(\kappa+1)}\right)
	=
	C(1+h_j^{-a}).
\end{align}
Since \(0<h_j\le1\), we have \(h_j^{-a}\ge1\). Hence
\begin{equation}
	\label{eq:lem43-fh-bound}
	|f_{h_j}(t,x)|
	\le
	C h_j^{-a}.
\end{equation}
From the definition of the internal stage \eqref{eq:Xtau}, we have
\begin{equation}
	\label{eq:lem43-stage-decomp}
	V_j-Y_j=\mathcal D_j+\mathcal B_j,
\end{equation}
where
\[
\mathcal D_j:=\tau_jh_j f_{h_j}(t_{j-1},Y_j),
\]
and
\[
\mathcal B_j:=
\sum_{k=1}^m
g^k(t_{j-1},Y_j)
I_{t_{j-1},t_{j-1}+\tau_jh_j}^{(k)}.
\]
By \eqref{eq:lem43-fh-bound} and \(0\le\tau_j\le1\),
\begin{equation}
	\label{eq:lem43-Dj-first}
	|\mathcal D_j|
	\le
	C h_j^{1-a}.
\end{equation}
Consequently,
\begin{equation}
	\label{eq:lem43-Dj-first-cond}
	\mathbb E_{j-1}\left[|\mathcal D_j|\right]
	\le
	C h_j^{1-a},
	\qquad
	\mathbb E_{j-1}\left[|\mathcal D_j|^2\right]
	\le
	C h_j^{2-2a}.
\end{equation}
Next we estimate \(\mathcal B_j\). Define
\[
\mathcal H_j
:=
\mathcal F_{j-1}^h\vee\sigma(\tau_j).
\]
Since \(Y_j\) is \(\mathcal F_{j-1}^h\)-measurable, both \(Y_j\) and
\(\tau_j\) are known under the conditioning with respect to \(\mathcal H_j\).
Moreover, conditionally on \(\mathcal H_j\), the Brownian increments
\[
I_{t_{j-1},t_{j-1}+\tau_jh_j}^{(k)}
=
W^k(t_{j-1}+\tau_jh_j)-W^k(t_{j-1})
\]
are centered and satisfy
\[
\mathbb E\left[
I_{t_{j-1},t_{j-1}+\tau_jh_j}^{(k)}
I_{t_{j-1},t_{j-1}+\tau_jh_j}^{(\ell)}
\,\middle|\,
\mathcal H_j
\right]
=
\delta_{k\ell}\tau_jh_j .
\]
where \(\delta_{rk}\) denotes the Kronecker delta. Therefore, using the independence of the Brownian components and the linear
growth of \(g^k\), we obtain
\begin{align}
	\mathbb E\left[
	|\mathcal B_j|^2
	\,\middle|\,
	\mathcal H_j
	\right]
	&=
	\mathbb E\left[
	\left|
	\sum_{k=1}^m
	g^k(t_{j-1},Y_j)
	I_{t_{j-1},t_{j-1}+\tau_jh_j}^{(k)}
	\right|^2
	\,\middle|\,
	\mathcal H_j
	\right]
	\nonumber\\
	&=
	\tau_jh_j
	\sum_{k=1}^m
	|g^k(t_{j-1},Y_j)|^2
	\nonumber\\
	&\le
	C\tau_jh_j(1+|Y_j|^2)
	\nonumber\\
	&\le
	Ch_j(1+|Y_j|^2)\nonumber.
\end{align}
By Jensen's inequality,
\begin{align}
	\mathbb E\left[
	|\mathcal B_j|
	\,\middle|\,
	\mathcal H_j
	\right]
	&\le
	\left(
	\mathbb E\left[
	|\mathcal B_j|^2
	\,\middle|\,
	\mathcal H_j
	\right]
	\right)^{1/2}\le
	C h_j^{1/2}(1+|Y_j|)\nonumber.
\end{align}
Using the tower property of conditional expectation and the inclusion
\(\mathcal F_{j-1}^h\subset\mathcal H_j\), we get
\begin{align}
	\label{eq:lem43-Bj-first-product}
	\mathbb E_{j-1}\left[|\mathcal B_j|\right]
	&=
	\mathbb E\left[
	\mathbb E\left[
	|\mathcal B_j|
	\,\middle|\,
	\mathcal H_j
	\right]
	\,\middle|\,
	\mathcal F_{j-1}^h
	\right]\le
	C h_j^{1/2}(1+|Y_j|),
\end{align}
and similarly,
\begin{align}
	\label{eq:lem43-Bj-second-product}
	\mathbb E_{j-1}\left[|\mathcal B_j|^2\right]
	&=
	\mathbb E\left[
	\mathbb E\left[
	|\mathcal B_j|^2
	\,\middle|\,
	\mathcal H_j
	\right]
	\,\middle|\,
	\mathcal F_{j-1}^h
	\right]\le
	Ch_j(1+|Y_j|^2).
\end{align}
Combining \eqref{eq:lem43-stage-decomp},
\eqref{eq:lem43-Dj-first-cond}, and
\eqref{eq:lem43-Bj-first-product}, we obtain
\begin{align}
	\label{eq:lem43-stage-increment-first}
	\mathbb E_{j-1}\left[|V_j-Y_j|\right]
	&\le
	\mathbb E_{j-1}\left[|\mathcal D_j|\right]
	+
	\mathbb E_{j-1}\left[|\mathcal B_j|\right]
	\nonumber\\
	&\le
	C h_j^{1-a}
	+
	C h_j^{1/2}(1+|Y_j|).
\end{align}
We next prove the second-moment estimate for the internal stage. From
\eqref{eq:lem43-stage-decomp}, we have
\[
|V_j|^2
\le
3|Y_j|^2+3|\mathcal D_j|^2+3|\mathcal B_j|^2.
\]
Therefore, by \eqref{eq:lem43-Dj-first-cond} and
\eqref{eq:lem43-Bj-second-product},
\begin{align}
	\mathbb E_{j-1}\left[|V_j|^2\right]
	&\le
	C|Y_j|^2
	+
	C h_j^{2-2a}
	+
	C h_j(1+|Y_j|^2)\nonumber.
\end{align}
Since \(a<1/2\) and \(h_j\le1\), we have
\[
h_j^{2-2a}\le C.
\]
Hence
\begin{equation}
	\label{eq:lem43-stage-second-final}
	\mathbb E_{j-1}\left[|V_j|^2\right]
	\le
	C(1+|Y_j|^2).
\end{equation}
We claim that
\begin{equation}
	\label{eq:lem43-one-sided-fh}
	\langle x,f_{h_j}(t,x)\rangle\le C(1+|x|^2),
	\qquad x\in\mathbb R^d.
\end{equation}
Indeed, if \(|x|\le h_j^{-\alpha}\), then \(T_{h_j}(x)=x\), and the claim
follows directly from \eqref{eq:rem-one-sided-drift-growth}. If
\(|x|>h_j^{-\alpha}\), set \(z=T_{h_j}(x)\). Then
\[
z=h_j^{-\alpha}\frac{x}{|x|},
\qquad
x=\frac{|x|}{h_j^{-\alpha}}z.
\]
Hence, by \eqref{eq:rem-one-sided-drift-growth},
\[
\langle x,f(t,T_{h_j}(x))\rangle
=
\frac{|x|}{h_j^{-\alpha}}
\langle z,f(t,z)\rangle
\le
C\frac{|x|}{h_j^{-\alpha}}(1+|z|^2).
\]
Since \(|z|=h_j^{-\alpha}\), we have
\[
\frac{|x|}{h_j^{-\alpha}}(1+|z|^2)
=
\frac{|x|}{h_j^{-\alpha}}
+
|x|h_j^{-\alpha}.
\]
As \(h_j\le1\), \(h_j^{-\alpha}\ge1\), and therefore
\[
\frac{|x|}{h_j^{-\alpha}}
\le
|x|
\le
1+|x|^2.
\]
Moreover, in the present case \(|x|>h_j^{-\alpha}\), which implies
\[
|x|h_j^{-\alpha}
\le
|x|^2.
\]
Consequently,
\[
\langle x,f_{h_j}(t,x)\rangle
\le
C(1+|x|^2).
\]
This proves \eqref{eq:lem43-one-sided-fh}. We now estimate the full step. Since \(Y_j=X_h^{j-1}\) is
\(\mathcal F_{j-1}^h\)-measurable, we have
\begin{equation}
	\label{eq:lem43-Y-cond-exp}
	\mathbb E_{j-1}\left[|Y_j|^{2p}\right]
	=
	|Y_j|^{2p}.
\end{equation}
Write the full step as
$$
	X_h^j=Y_j+\Delta_j,
$$
where
\begin{equation}
	\label{eq:lem43-delta-def}
	\Delta_j
	=
	h_jF_j+G_j+M_j,
\end{equation}
with
\[
F_j:=f_{h_j}(t_{j,\tau},V_j),
\qquad
t_{j,\tau}:=t_{j-1}+\tau_jh_j,
\]
\[
G_j
:=
\sum_{r=1}^m
g^r(t_{j-1},Y_j)
I_{t_{j-1},t_j}^{(r)},
\]
and
\[
M_j
:=
\sum_{r_1,r_2=1}^m
g^{r_1,r_2}(t_{j-1},Y_j)
I_{t_{j-1},t_j}^{(r_2,r_1)}.
\]
We use the elementary Taylor-type inequality
\begin{equation}
	\label{eq:lem43-taylor-ineq}
	|x+y|^{2p}
	\le
	|x|^{2p}
	+
	2p|x|^{2p-2}\langle x,y\rangle
	+
	C_p\left(
	|x|^{2p-2}|y|^2+|y|^{2p}
	\right),
	\qquad x,y\in\mathbb R^d.
\end{equation}
Applying \eqref{eq:lem43-taylor-ineq} with \(x=Y_j\) and \(y=\Delta_j\), we get
\begin{align}
	|X_h^j|^{2p}
	&\le
	|Y_j|^{2p}
	+
	2p|Y_j|^{2p-2}\langle Y_j,\Delta_j\rangle
	+
	C_p|Y_j|^{2p-2}|\Delta_j|^2
	+
	C_p|\Delta_j|^{2p}\nonumber.
\end{align}
Taking the conditional expectation \(\mathbb E_{j-1}[\cdot]\) and using
\eqref{eq:lem43-Y-cond-exp}, we obtain
\begin{align}
	\label{eq:lem43-after-taylor}
	\mathbb E_{j-1}\left[
	|X_h^j|^{2p}
	\right]
	&\le
	|Y_j|^{2p}
	+
	2p\,
	\mathbb E_{j-1}\left[
	|Y_j|^{2p-2}\langle Y_j,\Delta_j\rangle
	\right]
	\nonumber\\
	&\quad
	+
	C_p\,
	\mathbb E_{j-1}\left[
	|Y_j|^{2p-2}|\Delta_j|^2
	\right]
	\nonumber\\
	&\quad
	+
	C_p\,
	\mathbb E_{j-1}\left[
	|\Delta_j|^{2p}
	\right].
\end{align}
We estimate the three expectation terms in \eqref{eq:lem43-after-taylor}
separately. Since \(Y_j\) is \(\mathcal F_{j-1}^h\)-measurable and the Brownian
increments and iterated It\^o integrals in \(G_j\) and \(M_j\) have zero
conditional mean with respect to \(\mathcal F_{j-1}^h\), we have
$$
	\mathbb E_{j-1}[G_j]=0,
	\qquad
	\mathbb E_{j-1}[M_j]=0.
$$
Therefore, by \eqref{eq:lem43-delta-def},
\begin{equation}
	\label{eq:lem43-linear-term-start}
	\mathbb E_{j-1}\left[
	|Y_j|^{2p-2}\langle Y_j,\Delta_j\rangle
	\right]
	=
	h_j\,
	\mathbb E_{j-1}\left[
	|Y_j|^{2p-2}\langle Y_j,F_j\rangle
	\right].
\end{equation}
We decompose
\begin{equation}
	\label{eq:lem43-YF-decomp}
	\langle Y_j,F_j\rangle
	=
	\langle V_j,F_j\rangle
	-
	\langle V_j-Y_j,F_j\rangle .
\end{equation}
By the one-sided estimate for the projected drift \eqref{eq:lem43-one-sided-fh},
$$
	\langle V_j,F_j\rangle
	=
	\langle V_j,f_{h_j}(t_{j,\tau},V_j)\rangle
	\le
	C(1+|V_j|^2).
$$
Using \eqref{eq:lem43-stage-second-final}, we obtain
\begin{align}
	\label{eq:lem43-linear-main}
	h_j\,
	\mathbb E_{j-1}\left[
	|Y_j|^{2p-2}\langle V_j,F_j\rangle
	\right]
	&\le
	C h_j |Y_j|^{2p-2}
	\mathbb E_{j-1}\left[
	1+|V_j|^2
	\right]
	\nonumber\\
	&\le
	C h_j |Y_j|^{2p-2}(1+|Y_j|^2)
	\nonumber\\
	&\le
	C h_j(1+|Y_j|^{2p}).
\end{align}
For the remaining part, by \eqref{eq:lem43-fh-bound}, we have
\begin{equation}
	\label{eq:lem43-F-bound-ref}
	|F_j|\le C h_j^{-a}.
\end{equation}
Hence, by \eqref{eq:lem43-stage-increment-first},
\begin{align}
	h_j\,
	\mathbb E_{j-1}\left[
	|Y_j|^{2p-2}|\langle V_j-Y_j,F_j\rangle|
	\right]&\le
	C h_j |Y_j|^{2p-2}h_j^{-a}
	\mathbb E_{j-1}\left[
	|V_j-Y_j|
	\right]
	\nonumber\\
	&\le
	C |Y_j|^{2p-2}
	\left(
	h_j^{2-2a}
	+
	h_j^{3/2-a}(1+|Y_j|)
	\right)\nonumber.
\end{align}
Since \(a<1/2\), we have \(2-2a>1\) and \(3/2-a>1\). Therefore, for
\(h_j\le1\),
$$
	h_j^{2-2a}\le h_j,
	\qquad
	h_j^{3/2-a}\le h_j.
$$
Using also
$$
	|Y_j|^{2p-2}+|Y_j|^{2p-1}
	\le
	C(1+|Y_j|^{2p}),
$$
we get
\begin{equation}
	\label{eq:lem43-linear-rem-final}
	h_j\,
	\mathbb E_{j-1}\left[
	|Y_j|^{2p-2}|\langle V_j-Y_j,F_j\rangle|
	\right]
	\le
	C h_j(1+|Y_j|^{2p}).
\end{equation}
Combining \eqref{eq:lem43-linear-term-start},
\eqref{eq:lem43-YF-decomp}, \eqref{eq:lem43-linear-main}, and
\eqref{eq:lem43-linear-rem-final}, we obtain
\begin{equation}
	\label{eq:lem43-linear-final}
	\mathbb E_{j-1}\left[
	|Y_j|^{2p-2}\langle Y_j,\Delta_j\rangle
	\right]
	\le
	C h_j(1+|Y_j|^{2p}).
\end{equation}
Next we estimate the quadratic remainder. From \eqref{eq:lem43-delta-def},
\begin{equation}
	\label{eq:lem43-delta-square}
	|\Delta_j|^2
	\le
	C\left(
	h_j^2|F_j|^2+|G_j|^2+|M_j|^2
	\right).
\end{equation}
By \eqref{eq:lem43-F-bound-ref},
\begin{equation}
	\label{eq:lem43-drift-square}
	h_j^2|F_j|^2
	\le
	C h_j^{2-2a}
	\le
	C h_j.
\end{equation}
Moreover, since \(Y_j\) is \(\mathcal F_{j-1}^h\)-measurable and Brownian
increments after \(t_{j-1}\) are independent of \(\mathcal F_{j-1}^h\), we have
$$
	\mathbb E_{j-1}\left[
	\left|I_{t_{j-1},t_j}^{(r)}\right|^2
	\right]
	=
	h_j,
	\qquad r=1,\ldots,m.
$$
Thus, using the linear growth of \(g^r\) \eqref{eq:diffusion-growth}, one can obtain
\begin{align}
	\label{eq:lem43-G-second}
	\mathbb E_{j-1}\left[
	|G_j|^2
	\right]
	&\le
	C\sum_{r=1}^m
	|g^r(t_{j-1},Y_j)|^2
	\mathbb E_{j-1}\left[
	\left|I_{t_{j-1},t_j}^{(r)}\right|^2
	\right]
\le
	C h_j(1+|Y_j|^2).
\end{align}
Next we estimate \(M_j\). Recall the definition of $g^{r_1,r_2}(t,x)$ in \eqref{partial-priduct-gr}
\[
		g^{r_1,r_2}(t,x)
=
\partial_x g^{r_1}(t,x)\,g^{r_2}(t,x).
\]
By \eqref{eq:gx-bounded} in Assumption~\ref{ass:g} and \eqref{eq:diffusion-growth} in Remark~\ref{rem:one-sided-growth}, we have
\begin{equation}
	\label{eq:lem43-g12-linear-growth}
	|g^{r_1,r_2}(t,x)|
	\le
	C(1+|x|),
	\qquad t\in[0,T],\ x\in\mathbb R^d.
\end{equation}
For the iterated It\^o integrals, the standard conditional second-moment
estimate gives
$$
	\mathbb E_{j-1}\left[
	\left|
	I_{t_{j-1},t_j}^{(r_2,r_1)}
	\right|^2
	\right]
	\le
	C h_j^2,
	\qquad r_1,r_2=1,\ldots,m.
$$
Indeed, by the It\^o isometry and the independence of Brownian increments
after \(t_{j-1}\) from \(\mathcal F_{j-1}^h\),
\[
\mathbb E_{j-1}\left[
\left|
I_{t_{j-1},t_j}^{(r_2,r_1)}
\right|^2
\right]
\le
C
\mathbb E_{j-1}\left[
\int_{t_{j-1}}^{t_j}
\left|
I_{t_{j-1},s}^{(r_2)}
\right|^2 ds
\right]
=
C\int_{t_{j-1}}^{t_j}
(s-t_{j-1})\,ds
\le
C h_j^2.
\]
Consequently,
\begin{align}
	\label{eq:lem43-M-second}
	\mathbb E_{j-1}\left[
	|M_j|^2
	\right]
	&\le
	C
	\sum_{r_1,r_2=1}^m
	|g^{r_1,r_2}(t_{j-1},Y_j)|^2
	\mathbb E_{j-1}\left[
	\left|
	I_{t_{j-1},t_j}^{(r_2,r_1)}
	\right|^2
	\right]
	\nonumber\\
	&\le
	C h_j^2(1+|Y_j|^2).
\end{align}
Using \eqref{eq:lem43-delta-square}, \eqref{eq:lem43-drift-square},
\eqref{eq:lem43-G-second}, and \eqref{eq:lem43-M-second}, we get
\begin{align}
	\label{eq:lem43-quadratic-final}
	\mathbb E_{j-1}\left[
	|Y_j|^{2p-2}|\Delta_j|^2
	\right]
	&\le
	C h_j |Y_j|^{2p-2}
	+
	C h_j |Y_j|^{2p-2}(1+|Y_j|^2)
+
	C h_j^2 |Y_j|^{2p-2}(1+|Y_j|^2)
	\nonumber\\
	&\le
	C h_j(1+|Y_j|^{2p}).
\end{align}
Finally, we estimate the higher-order remainder. By \eqref{eq:lem43-delta-def},
\begin{equation}
	\label{eq:lem43-delta-high-start}
	|\Delta_j|^{2p}
	\le
	C_p\left(
	h_j^{2p}|F_j|^{2p}
	+
	|G_j|^{2p}
	+
	|M_j|^{2p}
	\right).
\end{equation}
Using \eqref{eq:lem43-F-bound-ref},
\begin{equation}
	\label{eq:lem43-drift-high}
	h_j^{2p}|F_j|^{2p}
	\le
	C h_j^{2p(1-a)}.
\end{equation}
Since \(p\ge1\) and \(a<1/2\), we have \(2p(1-a)>1\). Hence, for
\(h_j\le1\),
\begin{equation}
	\label{eq:lem43-drift-high-final}
	h_j^{2p(1-a)}
	\le
	C h_j.
\end{equation}
We now estimate the high-order moments of \(G_j\) and \(M_j\). Since \(Y_j\) is
\(\mathcal F_{j-1}^h\)-measurable, the coefficients
\(g^r(t_{j-1},Y_j)\) are fixed under \(\mathbb E_{j-1}[\cdot]\). Using
\[
G_j
=
\sum_{r=1}^m
g^r(t_{j-1},Y_j)I_{t_{j-1},t_j}^{(r)}
\]
and the elementary inequality
\[
\left|\sum_{r=1}^m a_r\right|^{2p}
\le
C_p\sum_{r=1}^m |a_r|^{2p},
\]
we obtain
\begin{align}
	\mathbb E_{j-1}\left[|G_j|^{2p}\right]
	&\le
	C_p
	\sum_{r=1}^m
	|g^r(t_{j-1},Y_j)|^{2p}
	\mathbb E_{j-1}\left[
	\left|I_{t_{j-1},t_j}^{(r)}\right|^{2p}
	\right]\nonumber.
\end{align}
For a Brownian increment,
$$
	\mathbb E_{j-1}\left[
	\left|I_{t_{j-1},t_j}^{(r)}\right|^{2p}
	\right]
	\le
	C_p h_j^p.
$$
Moreover, by the linear growth of \(g^r\)~\eqref{eq:diffusion-growth},
$$
	|g^r(t_{j-1},Y_j)|^{2p}
	\le
	C_p(1+|Y_j|^{2p}).
$$
Combining the last three estimates, we get
\begin{equation}
	\label{eq:lem43-G-high}
	\mathbb E_{j-1}\left[
	|G_j|^{2p}
	\right]
	\le
	C_p h_j^p(1+|Y_j|^{2p})
	\le
	C_p h_j(1+|Y_j|^{2p}).
\end{equation}
For \(M_j\), using
\[
M_j
=
\sum_{r_1,r_2=1}^m
g^{r_1,r_2}(t_{j-1},Y_j)
I_{t_{j-1},t_j}^{(r_2,r_1)}
\]
and the same elementary inequality, we obtain
\begin{align}
	\label{eq:lem43-M-high-detail}
	\mathbb E_{j-1}\left[|M_j|^{2p}\right]
	&\le
	C_p
	\sum_{r_1,r_2=1}^m
	|g^{r_1,r_2}(t_{j-1},Y_j)|^{2p}
	\mathbb E_{j-1}\left[
	\left|
	I_{t_{j-1},t_j}^{(r_2,r_1)}
	\right|^{2p}
	\right].
\end{align}
By \eqref{eq:lem43-g12-linear-growth},
\begin{equation}
	\label{eq:lem43-g12-high-linear-growth}
	|g^{r_1,r_2}(t_{j-1},Y_j)|^{2p}
	\le
	C_p(1+|Y_j|^{2p}).
\end{equation}
For the iterated It\^o integral, since
\[
I_{t_{j-1},t_j}^{(r_2,r_1)}
=
\int_{t_{j-1}}^{t_j}
I_{t_{j-1},s}^{(r_2)}\,dW^{r_1}(s)
\]
depends only on the Brownian increments over \([t_{j-1},t_j]\), it is
independent of \(\mathcal F_{j-1}^h\). Hence
\[
\mathbb E_{j-1}
\left[
\left|
I_{t_{j-1},t_j}^{(r_2,r_1)}
\right|^{2p}
\right]
=
\mathbb E
\left[
\left|
I_{t_{j-1},t_j}^{(r_2,r_1)}
\right|^{2p}
\right].
\]
By the Burkholder--Davis--Gundy inequality and Jensen's inequality,
\begin{align}
	\label{eq:lem43-iterated-high-moment}
	\mathbb E
	\left[
	\left|
	I_{t_{j-1},t_j}^{(r_2,r_1)}
	\right|^{2p}
	\right]\nonumber
	&\le
	C_p
	\mathbb E
	\left[
	\left(
	\int_{t_{j-1}}^{t_j}
	\left|
	I_{t_{j-1},s}^{(r_2)}
	\right|^2 ds
	\right)^p
	\right]
	\\\nonumber
	&\le
	C_p h_j^{p-1}
	\int_{t_{j-1}}^{t_j}
	\mathbb E
	\left[
	\left|
	I_{t_{j-1},s}^{(r_2)}
	\right|^{2p}
	\right]ds
	\\\nonumber
	&\le
	C_p h_j^{p-1}
	\int_{t_{j-1}}^{t_j}
	(s-t_{j-1})^p\,ds
	\\
	&\le
	C_p h_j^{2p}.
\end{align}
Combining \eqref{eq:lem43-M-high-detail},
\eqref{eq:lem43-g12-high-linear-growth}, and
\eqref{eq:lem43-iterated-high-moment}, we obtain
\begin{equation}
	\label{eq:lem43-M-high}
	\mathbb E_{j-1}\left[
	|M_j|^{2p}
	\right]
	\le
	C_p h_j^{2p}(1+|Y_j|^{2p})
	\le
	C_p h_j(1+|Y_j|^{2p}).
\end{equation}
Combining \eqref{eq:lem43-delta-high-start}--\eqref{eq:lem43-G-high}, and \eqref{eq:lem43-M-high}, we have
\begin{equation}
	\label{eq:lem43-high-final}
	\mathbb E_{j-1}\left[
	|\Delta_j|^{2p}
	\right]
	\le
	C_p h_j(1+|Y_j|^{2p}).
\end{equation}
Substituting \eqref{eq:lem43-linear-final}, \eqref{eq:lem43-quadratic-final},
and \eqref{eq:lem43-high-final} into \eqref{eq:lem43-after-taylor}, we conclude
that
\begin{equation}
	\label{eq:lem43-one-step-moment}
	\mathbb E_{j-1}\left[
	|X_h^j|^{2p}
	\right]
	\le
	(1+Ch_j)|Y_j|^{2p}+Ch_j.
\end{equation}
Since \(Y_j=X_h^{j-1}\), taking the full expectation gives
$$
	\mathbb E|X_h^j|^{2p}
	\le
	(1+Ch_j)\mathbb E|X_h^{j-1}|^{2p}
	+
	Ch_j.
$$
By the discrete Gronwall inequality and Assumption~\ref{ass:X0}, we obtain
$$
	\sup_{0\le j\le N}\mathbb E|X_h^j|^{2p}\le C_p.
$$
This proves \eqref{eq:lem43-moment-bound}.
	\end{proof}
%The randomized quadrature rule for stochastic processes, developed in
%\cite[Theorem 4.1]{BeynKru}, is a key tool in the analysis of randomized
%Milstein methods. The following lemma provides a local version of this
%quadrature error estimate. It is in the spirit of \cite[Theorem 4.1]{BeynKru}
%and will play an important role in our convergence analysis.
The randomized quadrature rule for stochastic processes, developed in
\cite[Theorem 4.1]{BeynKru}, is one of the main tools in the analysis of
randomized Milstein methods. In the present work, we only need a local form of
this estimate. The following lemma gives the corresponding local randomized
quadrature error bound, which will be used later in the local residual analysis.
\begin{lemma}[Exact drift temporal regularity and local randomized quadrature error]
	\label{lem:quadrature}
	Let Assumptions~\ref{ass:X0}--\ref{ass:g} hold, and let \(X\) be the exact
	solution of \eqref{eq:sde}. Define
	\[
	\mathcal Y(t):=f(t,X(t)),\qquad t\in[0,T].
	\]
	Then, for every \(q\ge2\), there exists a constant \(C_q>0\) such that
	\begin{equation}
		\label{eq:Y-holder-Lq}
		\|\mathcal Y(t)-\mathcal Y(s)\|_{L^q(\Omega_W)}
		\le
		C_q|t-s|^{1/2},
		\qquad s,t\in[0,T].
	\end{equation}
	Moreover, with \(t_{j,\tau}:=t_{j-1}+\tau_jh_j\) and
	\[
	A_j^\tau
	:=
	\int_{t_{j-1}}^{t_j}\mathcal Y(s)\,ds
	-
	h_j \mathcal Y(t_{j,\tau}),
	\]
	we have
	\begin{equation}
		\label{eq:local-quadrature-error-Lq}
		\|A_j^\tau\|_{L^q(\Omega)}
		\le
		C_qh_j^{3/2}.
	\end{equation}
	In addition,
	\begin{equation}
		\label{eq:local-quadrature-error-mean}
		\mathbb E\left[A_j^\tau\mid\mathcal F_{j-1}^h\right]=0.
	\end{equation}
\end{lemma}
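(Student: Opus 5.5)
The plan has three parts: the Hölder bound \eqref{eq:Y-holder-Lq} by splitting the drift difference into a time part and a space part, then the $L^q$ bound on $A_j^\tau$ by pulling the integral out, then the zero conditional mean by integrating out $\tau_j$ first.

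\textbf{Step 1: the Hölder bound \eqref{eq:Y-holder-Lq}.} For $s,t\in[0,T]$ we split
\[
\mathcal Y(t)-\mathcal Y(s)=\bigl(f(t,X(t))-f(s,X(t))\bigr)+\bigl(f(s,X(t))-f(s,X(s))\bigr).
\]
The first term is controlled by \eqref{eq:drift-time-holder}. Its pointwise bound is $K(1+|X(t)|^{\kappa+1})|t-s|^{1/2}$. Taking the $L^q(\Omega_W)$ norm and using \eqref{eq:exact-moment-bound} with moment order $q(\kappa+1)$ gives $C_q|t-s|^{1/2}$.

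The second term is controlled by \eqref{eq:drift-poly-lip}, which bounds it by $K(1+|X(t)|^\kappa+|X(s)|^\kappa)|X(t)-X(s)|$. We apply Hölder's inequality with exponents $2q$ and $2q$. The factor $\|1+|X(t)|^\kappa+|X(s)|^\kappa\|_{L^{2q}}$ is bounded uniformly by \eqref{eq:exact-moment-bound}. The factor $\|X(t)-X(s)\|_{L^{2q}(\Omega_W)}\le C_{2q}|t-s|^{1/2}$ follows from \eqref{eq:exact-holder-bound}. Combining the two terms yields \eqref{eq:Y-holder-Lq}.

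\textbf{Step 2: the bound \eqref{eq:local-quadrature-error-Lq} on $A_j^\tau$.} We write
\[
A_j^\tau=\int_{t_{j-1}}^{t_j}\bigl(\mathcal Y(s)-\mathcal Y(t_{j,\tau})\bigr)\,ds.
\]
By Minkowski's integral inequality,
\[
\|A_j^\tau\|_{L^q(\Omega)}\le\int_{t_{j-1}}^{t_j}\|\mathcal Y(s)-\mathcal Y(t_{j,\tau})\|_{L^q(\Omega)}\,ds.
\]
To handle the random time $t_{j,\tau}$, we use Fubini on the product space. Since $\tau_j$ is independent of $W$,
\[
\mathbb E|\mathcal Y(s)-\mathcal Y(t_{j,\tau})|^q=\int_0^1\mathbb E_W|\mathcal Y(s)-\mathcal Y(t_{j-1}+uh_j)|^q\,du.
\]
By Step 1, each integrand is at most $C_q|s-t_{j-1}-uh_j|^{q/2}\le C_qh_j^{q/2}$. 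Integrating over $s$ then gives $C_qh_j\cdot h_j^{1/2}=C_qh_j^{3/2}$. This only needs $\mathcal Y$ to be jointly measurable, which holds because $X$ has continuous paths and $f$ is continuous.

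\textbf{Step 3: the zero conditional mean \eqref{eq:local-quadrature-error-mean}.} Let $\mathcal G:=\mathcal F^W_T\otimes\mathcal F^\tau_{j-1}$, so that $\mathcal F_{j-1}^h\subset\mathcal G$. Since $\tau_j$ is independent of $\mathcal G$ and uniform on $(0,1)$, integrating out $u=\tau_j$ gives
\[
\mathbb E\bigl[h_j\mathcal Y(t_{j-1}+\tau_jh_j)\mid\mathcal G\bigr]=h_j\int_0^1\mathcal Y(t_{j-1}+uh_j)\,du=\int_{t_{j-1}}^{t_j}\mathcal Y(s)\,ds.
\]
The integral term in $A_j^\tau$ is $\mathcal G$-measurable, so $\mathbb E[A_j^\tau\mid\mathcal G]=0$. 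The tower property with $\mathcal F_{j-1}^h\subset\mathcal G$ then gives \eqref{eq:local-quadrature-error-mean}; integrability is guaranteed by Step 2.

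\textbf{Main obstacle.} The only delicate point is justifying the "freezing" of the random time. This is the Fubini/substitution-lemma argument for conditional expectations, which requires joint measurability of $(u,\omega)\mapsto\mathcal Y(t_{j-1}+uh_j)$ and the independence of $\tau_j$ from $\mathcal G$. Both are available from the product-space construction.
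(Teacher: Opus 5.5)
Your proposal is correct and follows essentially the same route as the paper. Both use the same time/space splitting of $\mathcal Y(t)-\mathcal Y(s)$; you merely evaluate the two pieces in the opposite order. Both obtain the $h_j^{3/2}$ bound by Minkowski's inequality plus integrating out the uniform $\tau_j$, and both get the zero conditional mean by conditioning on $\mathcal F_T^W\otimes\mathcal F_{j-1}^\tau$ and applying the tower property. Your explicit remark on the joint measurability of $(u,\omega)\mapsto\mathcal Y(t_{j-1}+uh_j)$ is a small but welcome addition that the paper leaves implicit.
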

\begin{proof}
	We first prove \eqref{eq:Y-holder-Lq}. For \(s,t\in[0,T]\), we write
	\[
	\mathcal Y(t)-\mathcal Y(s)
	=
	f(t,X(t))-f(t,X(s))
	+
	f(t,X(s))-f(s,X(s)).
	\]
	By the polynomial Lipschitz condition on \(f\)~\eqref{eq:drift-poly-lip}, H\"older's inequality, and
	Lemma~\ref{lem:exact-moment},
	\[
	\begin{aligned}
		&\|f(t,X(t))-f(t,X(s))\|_{L^q(\Omega_W)}
		\\
		&\le
		C
		\left\|
		\bigl(1+|X(t)|^\kappa+|X(s)|^\kappa\bigr)
		|X(t)-X(s)|
		\right\|_{L^q(\Omega_W)}
		\\
		&\le
		C_q\|X(t)-X(s)\|_{L^{2q}(\Omega_W)}
		\\
		&\le
		C_q|t-s|^{1/2}.
	\end{aligned}
	\]
	Moreover, by the time regularity assumption on \(f\)~\eqref{eq:drift-time-holder} and
	Lemma~\ref{lem:exact-moment},
	\[
	\begin{aligned}
		&\|f(t,X(s))-f(s,X(s))\|_{L^q(\Omega_W)}
		\\
		&\le
		C
		\left\|
		1+|X(s)|^{\kappa+1}
		\right\|_{L^q(\Omega_W)}
		|t-s|^{1/2}
		\\
		&\le
		C_q|t-s|^{1/2}.
	\end{aligned}
	\]
	Combining the above two estimates gives \eqref{eq:Y-holder-Lq}.
	
	Next, we estimate the local randomized quadrature error. Since
	\[
	A_j^\tau
	=
	\int_{t_{j-1}}^{t_j}
	\bigl[
	\mathcal Y(s)-\mathcal Y(t_{j,\tau})
	\bigr]\,ds,
	\]
	Minkowski's inequality gives
	\[
	\|A_j^\tau\|_{L^q(\Omega)}
	\le
	\int_{t_{j-1}}^{t_j}
	\|\mathcal Y(s)-\mathcal Y(t_{j,\tau})\|_{L^q(\Omega)}
	\,ds.
	\]
	For fixed \(s\in[t_{j-1},t_j]\), by the uniform distribution of \(\tau_j\) and
	\eqref{eq:Y-holder-Lq},
	\[
	\begin{aligned}
		&
		\|\mathcal Y(s)-\mathcal Y(t_{j,\tau})\|_{L^q(\Omega)}^q
		\\
		&=
		\int_0^1
		\mathbb E_W
		\left|
		\mathcal Y(s)-\mathcal Y(t_{j-1}+\theta h_j)
		\right|^q
		\,d\theta
		\\
		&\le
		C_q
		\int_0^1
		|s-(t_{j-1}+\theta h_j)|^{q/2}
		\,d\theta
		\\
		&\le
		C_q h_j^{q/2}.
	\end{aligned}
	\]
	Hence
	\[
	\|\mathcal Y(s)-\mathcal Y(t_{j,\tau})\|_{L^q(\Omega)}
	\le
	C_q h_j^{1/2}.
	\]
	Therefore,
	\[
	\|A_j^\tau\|_{L^q(\Omega)}
	\le
	C_q\int_{t_{j-1}}^{t_j}h_j^{1/2}\,ds
	=
	C_qh_j^{3/2}.
	\]
	This proves \eqref{eq:local-quadrature-error-Lq}.
	
	Finally, we prove the conditional mean property. Since \(\tau_j\) is independent
	of \(\mathcal F_T^W\otimes\mathcal F_{j-1}^\tau\) and is uniformly distributed
	on \((0,1)\), we have
	\[
	\begin{aligned}
		&
		\mathbb E\left[
		h_j\mathcal Y(t_{j,\tau})
		\,\middle|\,
		\mathcal F_T^W\otimes\mathcal F_{j-1}^\tau
		\right]
		\\
		&=
		h_j\int_0^1
		\mathcal Y(t_{j-1}+\theta h_j)\,d\theta
		=
		\int_{t_{j-1}}^{t_j}
		\mathcal Y(s)\,ds.
	\end{aligned}
	\]
	Thus
	\[
	\mathbb E\left[
	A_j^\tau
	\,\middle|\,
	\mathcal F_T^W\otimes\mathcal F_{j-1}^\tau
	\right]
	=0.
	\]
	Since
	\[
	\mathcal F_{j-1}^h
	=
	\mathcal F_{t_{j-1}}^W\otimes\mathcal F_{j-1}^\tau
	\subset
	\mathcal F_T^W\otimes\mathcal F_{j-1}^\tau,
	\]
	the tower property yields
	\[
	\mathbb E\left[
	A_j^\tau
	\,\middle|\,
	\mathcal F_{j-1}^h
	\right]
	=0.
	\]
	This proves \eqref{eq:local-quadrature-error-mean}.
\end{proof}
\begin{lemma}[Local Milstein diffusion residual]
	\label{lem:milstein-residual}
	Let Assumptions~\ref{ass:X0}--\ref{ass:g} hold, and let \(X\) be the exact
	solution of \eqref{eq:sde}. Define
	\begin{align}
		\label{eq:milstein-residual-def}
		\mathcal M_j
		:=
		&
		\sum_{r=1}^m
		\int_{t_{j-1}}^{t_j}
		g^r(s,X(s))\,dW^r(s)
-
		\sum_{r=1}^m
		g^r(t_{j-1},X(t_{j-1}))
		I_{t_{j-1},t_j}^{(r)}
		\nonumber\\
		&-
		\sum_{r_1,r_2=1}^m
		g^{r_1,r_2}(t_{j-1},X(t_{j-1}))
		I_{t_{j-1},t_j}^{(r_2,r_1)} .
	\end{align}
	Then there exists a constant \(C>0\), independent of \(h\) and \(j\), such that
	\begin{equation}
		\label{eq:milstein-residual-local-L2}
		\|\mathcal M_j\|_{L^2(\Omega_W)}
		\le
		Ch_j^{3/2},
	\end{equation}
	and
	\begin{equation}
		\label{eq:milstein-residual-local-mean}
		\mathbb E\left[\mathcal M_j\mid\mathcal F_{j-1}^h\right]=0.
	\end{equation}
\end{lemma}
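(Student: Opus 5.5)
The mean-zero property \eqref{eq:milstein-residual-local-mean} is the easy part, so I will dispose of it first. Every term in $\mathcal M_j$ is an Itô integral over $[t_{j-1},t_j]$ with an $(\mathcal F_t^W)$-adapted integrand: $g^r(s,X(s))$, the constant-in-$s$ factor $g^r(t_{j-1},X(t_{j-1}))$, and $g^{r_1,r_2}(t_{j-1},X(t_{j-1}))I^{(r_2)}_{t_{j-1},s}$. The randomization $(\tau_j)$ is independent of $W$, and $\mathcal F_{j-1}^h=\mathcal F_{t_{j-1}}^W\otimes\mathcal F_{j-1}^\tau$. Hence the conditional expectation given $\mathcal F_{j-1}^h$ equals the conditional expectation given $\mathcal F_{t_{j-1}}^W$, which vanishes by the martingale property. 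Square integrability of the integrands follows from Lemma~\ref{lem:exact-moment} together with the linear growth \eqref{eq:diffusion-growth} and \eqref{eq:lem43-g12-linear-growth}.

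For the $L^2$ bound \eqref{eq:milstein-residual-local-L2}, I will write $\mathcal M_j=\sum_{r}\int_{t_{j-1}}^{t_j}R^r(s)\,dW^r(s)$, where
\[
R^r(s):=g^r(s,X(s))-g^r(t_{j-1},X(t_{j-1}))-\sum_{r_2}\partial_xg^r(t_{j-1},X(t_{j-1}))\,g^{r_2}(t_{j-1},X(t_{j-1}))\,I^{(r_2)}_{t_{j-1},s}.
\]
This uses $I^{(r_2,r_1)}_{t_{j-1},t_j}=\int_{t_{j-1}}^{t_j}I^{(r_2)}_{t_{j-1},s}dW^{r_1}(s)$ and relabels $r_1=r$. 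By the Itô isometry it suffices to show $\|R^r(s)\|_{L^2(\Omega_W)}\le C h_j$ uniformly for $s\in[t_{j-1},t_j]$, since then $\|\mathcal M_j\|_{L^2}^2\le C\int_{t_{j-1}}^{t_j}h_j^2\,ds=Ch_j^3$.

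To bound $R^r(s)$, I abbreviate $X:=X(t_{j-1})$ and split into three pieces. The first is $g^r(s,X(s))-g^r(t_{j-1},X(s))$, which is $O(h_j(1+|X(s)|))$ by \eqref{eq:g-time-lip} and hence $O(h_j)$ in $L^2$. The second is the Taylor remainder
\[
g^r(t_{j-1},X(s))-g^r(t_{j-1},X)-\partial_xg^r(t_{j-1},X)(X(s)-X).
\]
Using the mean value theorem in integral form and \eqref{eq:gx-lip}, it is bounded by $\frac K2|X(s)-X|^2$. In $L^2$ this is at most $C\|X(s)-X\|_{L^4}^2\le Ch_j$ by \eqref{eq:exact-holder-bound}. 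The third piece is $\partial_xg^r(t_{j-1},X)\bigl[X(s)-X-\sum_{r_2}g^{r_2}(t_{j-1},X)I^{(r_2)}_{t_{j-1},s}\bigr]$. Since $\partial_x g^r$ is bounded by \eqref{eq:gx-bounded}, it suffices to bound the bracket. The bracket equals
\[
\int_{t_{j-1}}^s f(u,X(u))\,du+\sum_{r_2}\int_{t_{j-1}}^s\bigl[g^{r_2}(u,X(u))-g^{r_2}(t_{j-1},X)\bigr]dW^{r_2}(u).
\]
The drift integral is $O(h_j)$ in $L^2$ by \eqref{eq:drift-growth} and the moment bounds for all orders in Lemma~\ref{lem:exact-moment}; this is where the superlinear growth enters, and it is harmless. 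For the stochastic integral, the Itô isometry, \eqref{eq:g-lip}, \eqref{eq:g-time-lip} and \eqref{eq:exact-holder-bound} give an $L^2$ norm of at most $C\bigl(\int_{t_{j-1}}^s (u-t_{j-1})\,du\bigr)^{1/2}\le Ch_j$.

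The main obstacle is the Taylor step. It needs differentiability of $g^r$ in $x$ only, and the integral mean-value form with \eqref{eq:gx-lip} handles it cleanly. One must also be careful that Itô isometry is applied with respect to $\mathbb P_W$ only. The nonsmooth drift never needs differentiating, because it appears only inside the Lebesgue integral, where the crude $O(h_j)$ bound suffices. Collecting the three pieces gives $\|R^r(s)\|_{L^2}\le Ch_j$ and therefore \eqref{eq:milstein-residual-local-L2}.
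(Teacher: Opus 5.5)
Your proposal is correct and follows essentially the same route as the paper. You use the same representation $\mathcal M_j=\sum_r\int_{t_{j-1}}^{t_j}R^r(s)\,dW^r(s)$ and the same three-way split: a time-difference term, a Taylor remainder controlled by the Lipschitz continuity of $\partial_x g^r$, and a term whose bracket expands into a drift integral (the only place the super-linear growth enters) plus a stochastic integral. The conditional mean-zero property likewise comes from the martingale property of these It\^o integrals. The only cosmetic difference is that you bound $\|R^r(s)\|_{L^2(\Omega_W)}\le Ch_j$ uniformly in $s$ instead of using the paper's $s$-dependent bounds, which gives the same $Ch_j^3$ after integration.
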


\begin{proof}
The proof is based on the Milstein diffusion residual decomposition used in
\cite[Lemma 6.2]{KruWu2019}, but we write it directly in the present notations and only in
the \(L^2\)-setting. The projection is applied only to the drift coefficient, and therefore it does
	not enter the diffusion residual. For \(s\in[t_{j-1},t_j]\), define
	\begin{align}
		R_j^r(s)
		:=
		&
		g^r(s,X(s))
		-
		g^r(t_{j-1},X(t_{j-1}))-
		\sum_{r_2=1}^m
		g^{r,r_2}(t_{j-1},X(t_{j-1}))
		I_{t_{j-1},s}^{(r_2)} \nonumber.
	\end{align}
	Then, by the definition
	\[
	g^{r,r_2}(t,x)=\partial_x g^r(t,x)g^{r_2}(t,x),
	\]
	we have
	\begin{equation}
		\label{eq:Mj-Rj}
		\mathcal M_j
		=
		\sum_{r=1}^m
		\int_{t_{j-1}}^{t_j}
		R_j^r(s)\,dW^r(s).
	\end{equation}
To obtain\eqref{eq:milstein-residual-local-L2}, it is enough to prove
	\begin{equation}
		\label{eq:Rjr-target}
		\int_{t_{j-1}}^{t_j}
		\|R_j^r(s)\|_{L^2(\Omega_W)}^2\,ds
		\le
		C h_j^3,
		\qquad r=1,\ldots,m .
	\end{equation}
	We decompose \(R_j^r(s)\) into three parts:
	\begin{align}
		\label{eq:Rjr-decomp}
		R_j^r(s)
		&=
		\Bigl[
		g^r(s,X(s))-g^r(t_{j-1},X(s))
		\Bigr]
		\nonumber\\
		&\quad+
		\Bigl[
		g^r(t_{j-1},X(s))
		-
		g^r(t_{j-1},X(t_{j-1}))
		\nonumber\\
		&\qquad\quad
		-
		\partial_x g^r(t_{j-1},X(t_{j-1}))
		\bigl(X(s)-X(t_{j-1})\bigr)
		\Bigr]
		\nonumber\\
		&\quad+
		\Bigl[
		\partial_x g^r(t_{j-1},X(t_{j-1}))
		\bigl(X(s)-X(t_{j-1})\bigr)
		\nonumber\\
		&\qquad\quad
		-
		\sum_{r_2=1}^m
		g^{r,r_2}(t_{j-1},X(t_{j-1}))
		I_{t_{j-1},s}^{(r_2)}
		\Bigr]
		\nonumber\\
		&=:
		D_{j,1}^r(s)+D_{j,2}^r(s)+D_{j,3}^r(s).
	\end{align}
	We first estimate \(D_{j,1}^r\). By the time Lipschitz continuity of \(g^r\)~\eqref{eq:g-time-lip},
	\begin{align}
		\|D_{j,1}^r(s)\|_{L^2(\Omega_W)}
		&\le
		C
		\left\|
		(1+|X(s)|)(s-t_{j-1})
		\right\|_{L^2(\Omega_W)}
		\nonumber\\
		&\le
		C(s-t_{j-1})\nonumber,
	\end{align}
	where we used Lemma~\ref{lem:exact-moment}. Hence
	\begin{equation}
		\label{eq:D1-estimate}
		\int_{t_{j-1}}^{t_j}
		\|D_{j,1}^r(s)\|_{L^2(\Omega_W)}^2\,ds
		\le
		C h_j^3 .
	\end{equation}
	Next, set \(\Delta X_j(s):=X(s)-X(t_{j-1})\). By the integral form of the
	first-order Taylor expansion,
	\begin{align}
		D_{j,2}^r(s)=
		\int_0^1
		\Big[
		\partial_x g^r
		\bigl(t_{j-1},X(t_{j-1})+\theta\Delta X_j(s)\bigr)
		-
		\partial_x g^r\bigl(t_{j-1},X(t_{j-1})\bigr)
		\Big]
		\Delta X_j(s)\,d\theta \nonumber.
	\end{align}
	Since \(\partial_x g^r\) is Lipschitz continuous in the spatial variable,
	\[
	|D_{j,2}^r(s)|\le C|\Delta X_j(s)|^2.
	\]
	Therefore, by Lemma~\ref{lem:exact-moment},
	\begin{align}
		\|D_{j,2}^r(s)\|_{L^2(\Omega_W)}
		&\le
		C\|X(s)-X(t_{j-1})\|_{L^4(\Omega_W)}^2
		\nonumber\\
		&\le
		C(s-t_{j-1})\nonumber.
	\end{align}
	Consequently,
	\begin{equation}
		\label{eq:D2-estimate}
		\int_{t_{j-1}}^{t_j}
		\|D_{j,2}^r(s)\|_{L^2(\Omega_W)}^2\,ds
		\le
		C h_j^3 .
	\end{equation}
	It remains to estimate \(D_{j,3}^r\). Using the integral equation for the exact
	solution, we obtain
	\begin{align}
		D_{j,3}^r(s)
		&=
		\partial_x g^r(t_{j-1},X(t_{j-1}))
		\int_{t_{j-1}}^s
		f(u,X(u))\,du
		\nonumber\\
		&\quad+
		\sum_{r_2=1}^m
		\partial_x g^r(t_{j-1},X(t_{j-1}))
		\int_{t_{j-1}}^s
		\Bigl[
		g^{r_2}(u,X(u))
		-
		g^{r_2}(t_{j-1},X(t_{j-1}))
		\Bigr]\,dW^{r_2}(u)
		\nonumber\\
		&=:
		D_{j,3}^{r,f}(s)+D_{j,3}^{r,g}(s)\nonumber.
	\end{align}
An application of ~\eqref{eq:gx-bounded}, the polynomial growth of \(f\)~\eqref{eq:drift-growth} and
	Lemma~\ref{lem:exact-moment} yield
	\begin{align}
		\|D_{j,3}^{r,f}(s)\|_{L^2(\Omega_W)}
		&\le
		C
		\int_{t_{j-1}}^s
		\|f(u,X(u))\|_{L^2(\Omega_W)}\,du
		\nonumber\\
		&\le
			C\left(
		1+
		\sup_{t\in[0,T]}
		\|X(t)\|_{L^{2(\kappa+1)}(\Omega_W)}^{\kappa+1}
		\right)(s-t_{j-1})\nonumber\\
		&\leq
		C(s-t_{j-1})\nonumber.
	\end{align}
In fact, this is the only place in this residual estimate where the super-linear drift
	growth is used. Compared with the globally Lipschitz case in \cite[ Lemma 6.2]{KruWu2019}, the linear growth bound
	for \(f\) is replaced here by the polynomial growth bound together with the
	high-moment estimate of the exact solution. 
	Thus
	\begin{equation}
		\label{eq:D3f-estimate}
		\int_{t_{j-1}}^{t_j}
		\|D_{j,3}^{r,f}(s)\|_{L^2(\Omega_W)}^2\,ds
		\le
		C h_j^3 .
	\end{equation}
	For the stochastic part, by the boundedness of \(\partial_xg^r\) \eqref{eq:gx-bounded}, It\^o's
	isometry, and the inequality \(|a_1+\cdots+a_m|^2\le C\sum_{r_2=1}^m|a_{r_2}|^2\), we can obtain
	\begin{align}
		\label{eq:D3g-start}
		\|D_{j,3}^{r,g}(s)\|_{L^2(\Omega_W)}^2
		&\le
		C
		\sum_{r_2=1}^m
		\int_{t_{j-1}}^s
		\left\|
		g^{r_2}(u,X(u))-g^{r_2}(t_{j-1},X(t_{j-1}))
		\right\|_{L^2(\Omega_W)}^2\,du.
	\end{align}
	For \(u\in[t_{j-1},s]\), by the time Lipschitz continuity~\eqref{eq:g-time-lip} and the spatial
	Lipschitz continuity of \(g^{r_2}\)~\eqref{eq:g-lip},
	\begin{align*}
		&\left\|
		g^{r_2}(u,X(u))-g^{r_2}(t_{j-1},X(t_{j-1}))
		\right\|_{L^2(\Omega_W)}
		\\
		&\le
		C|u-t_{j-1}|\|1+|X(u)|\|_{L^2(\Omega_W)}
		+
		C\|X(u)-X(t_{j-1})\|_{L^2(\Omega_W)}
		\\
		&\le
		C|u-t_{j-1}|^{1/2},
	\end{align*}
	where Lemma~\ref{lem:exact-moment} and \eqref{eq:mesh-size-condition} were used
	in the last step. Substituting this estimate into \eqref{eq:D3g-start} gives
	\[
	\|D_{j,3}^{r,g}(s)\|_{L^2(\Omega_W)}^2
	\le
	C\int_{t_{j-1}}^s(u-t_{j-1})\,du
	\le
	C(s-t_{j-1})^2.
	\]
	Therefore,
$$
		\int_{t_{j-1}}^{t_j}
		\|D_{j,3}^{r,g}(s)\|_{L^2(\Omega_W)}^2\,ds
		\le
		C h_j^3 .
$$
	Together with \eqref{eq:D1-estimate}, \eqref{eq:D2-estimate}, and
	\eqref{eq:D3f-estimate}, this proves \eqref{eq:Rjr-target}. 
	By \eqref{eq:Mj-Rj}, It\^o's isometry, and \eqref{eq:Rjr-target},
	\begin{align}
		\mathbb E\left[|\mathcal M_j|^2\right]
		&\le
		C
		\sum_{r=1}^m
		\int_{t_{j-1}}^{t_j}
		\|R_j^r(s)\|_{L^2(\Omega_W)}^2\,ds
		\nonumber\\
		&\le
		C h_j^3 \nonumber.
	\end{align}
	Taking square roots gives \eqref{eq:milstein-residual-local-L2}. Moreover,
	\eqref{eq:Mj-Rj} shows that \(\mathcal M_j\) is a sum of stochastic integrals over
	\([t_{j-1},t_j]\) with adapted square-integrable integrands. Hence
	\[
	\mathbb E\left[\mathcal M_j\mid\mathcal F_{j-1}^h\right]=0,
	\]
	which proves \eqref{eq:milstein-residual-local-mean}.
\end{proof}
\begin{remark}\label{rem:difficulty-in-Lp}
	We close this section with a comment on the scope of the convergence result.
	The strong convergence estimate proved in this paper is formulated in the
	\(L^2\)-sense. Some of the local ingredients, such as the randomized quadrature
	error in Lemma~\ref{lem:quadrature} and the Milstein diffusion residual in
	Lemma~\ref{lem:milstein-residual}, can in principle be extended to \(L^p\)-spaces.
	However, an \(L^p\)-convergence analysis for the full projected
	drift-randomized Milstein method would require a substantially different
	stability argument. In particular, Lemma~\ref{lem:one-step-stability} below relies
	essentially on the mean-square structure and on the one-sided Lipschitz condition
	to control the drift contribution under super-linear growth. Extending this
	one-step stability estimate to \(L^p\) would lead to additional weighted
	cross terms and is beyond the scope of the present work.
\end{remark}

\section{Strong Convergence Analysis}\label{sec:convergence}
As mentioned in Remark~\ref{rem:difficulty-in-Lp}, if one follows the original framework of \cite{BeynKru}, the main difficulty in extending the randomized Milstein method to SDEs with super-linearly growing drift coefficients lies in the stability analysis. In this section, we establish several auxiliary lemmas to overcome this difficulty. These results form the technical core of the paper.

The first lemma establishes a one-step mean-square stability estimate, which is
a key ingredient in our convergence analysis. In contrast to the globally
Lipschitz setting considered in \cite{KruWu2019}, the super-linear growth of the
drift prevents us from applying the bistability estimate
\cite[Lemma 5.4]{KruWu2019} directly. Instead, we use a one-step stability
argument inspired by \cite[Lemma 1.1.3]{milsbook}. This approach is well suited
to the present projected scheme and will be combined later with local residual
estimates to derive the strong convergence result.
\begin{lemma}[One-step mean-square stability]
	\label{lem:one-step-stability}
	Let Assumptions~\ref{ass:X0}--\ref{ass:g} hold. Assume that
	 \(Y\), \(Z\) are \(\mathcal F_{j-1}^h\)-measurable random variables and for every \(q\ge1\), there exists a constant \(C_q>0\), independent
	of \(h_j\), such that
	\[
	\mathbb E\left[|Y|^q\right]+\mathbb E\left[|Z|^q\right]\le C_q .
	\]
	Set
	$
	t_{j,\tau}:=t_{j-1}+\tau_jh_j .
	$
	For \(y\in\mathbb R^d\), define
$$
		\psi_j^\tau(y)
		=
		y+
		\tau_jh_j f_{h_j}(t_{j-1},y)
		+
		\sum_{r=1}^m
		g^r(t_{j-1},y)
		I_{t_{j-1},t_{j,\tau}}^{(r)} .
$$
	Define also
	\begin{align}
		\label{eq:one-step-Phi-def}
		\Phi_j(y)
		=
		&
		h_j f_{h_j}
		\bigl(t_{j,\tau},\psi_j^\tau(y)\bigr)+
		\sum_{r=1}^m
		g^r(t_{j-1},y)
		I_{t_{j-1},t_j}^{(r)}+
		\sum_{r_1,r_2=1}^m
		g^{r_1,r_2}(t_{j-1},y)
		I_{t_{j-1},t_j}^{(r_2,r_1)} .
	\end{align}
	Then there exists a constant \(C>0\), independent of \(h_j\), such that
	\begin{equation}
		\label{eq:one-step-stability}
		\mathbb E\left[
		\left|
		Y-Z+\Phi_j(Y)-\Phi_j(Z)
		\right|^2\right]
		\le
		(1+Ch_j)\mathbb E\left[|Y-Z|^2\right]
		+
		Ch_j^3 .
	\end{equation}
	Moreover,
	\begin{equation}
		\label{eq:increment-difference-stability}
		\mathbb E
		\left[\left|
		\Phi_j(Y)-\Phi_j(Z)
		\right|^2\right]
		\le
		Ch_j\mathbb E\left[|Y-Z|^2\right] .
	\end{equation}
\end{lemma}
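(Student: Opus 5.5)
Set \(E:=Y-Z\) and split \(\Phi_j(Y)-\Phi_j(Z)=h_j\delta F+\delta G+\delta M\). Here
\[
\delta F:=f_{h_j}(t_{j,\tau},\psi_j^\tau(Y))-f_{h_j}(t_{j,\tau},\psi_j^\tau(Z)),
\]
\(\delta G:=\sum_r[g^r(t_{j-1},Y)-g^r(t_{j-1},Z)]I^{(r)}_{t_{j-1},t_j}\), and \(\delta M\) is the analogous difference of the Milstein terms. Since \(Y,Z\) are \(\mathcal F_{j-1}^h\)-measurable, and the Brownian increments on \([t_{j-1},t_j]\) and \(\tau_j\) are independent of \(\mathcal F_{j-1}^h\), we get \(\mathbb E_{j-1}[\delta G]=\mathbb E_{j-1}[\delta M]=0\). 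By \eqref{eq:g-lip}, \eqref{eq:g12-lip} and It\^o isometry, \(\mathbb E_{j-1}|\delta G|^2\le Ch_j|E|^2\) and \(\mathbb E_{j-1}|\delta M|^2\le Ch_j^2|E|^2\). Expanding the square gives
\[
|E+\Phi_j(Y)-\Phi_j(Z)|^2=|E|^2+2h_j\langle E,\delta F\rangle+2\langle E,\delta G+\delta M\rangle+|h_j\delta F+\delta G+\delta M|^2 ,
\]
and the third term vanishes under \(\mathbb E_{j-1}\). The second estimate \eqref{eq:increment-difference-stability} then reduces to showing \(h_j^2\mathbb E|\delta F|^2\le Ch_j\mathbb E|E|^2\). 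Note also that the \(Ch_j^3\) in \eqref{eq:one-step-stability} is slack, which we may use to absorb projection-type errors.

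The drift terms are the heart of the proof, and I would treat them in three steps.

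\textbf{Step 1: the internal stage.} Set \(\delta\psi:=\psi_j^\tau(Y)-\psi_j^\tau(Z)\). Then \(\delta\psi=E+\tau_jh_j[f_{h_j}(t_{j-1},Y)-f_{h_j}(t_{j-1},Z)]+\sum_r[\ldots]I^{(r)}_{t_{j-1},t_{j,\tau}}\). The projected drift satisfies the local Lipschitz bound
\[
|f_{h_j}(t,x)-f_{h_j}(t,y)|\le K(1+2h_j^{-\alpha\kappa})|x-y| ,
\]
by \eqref{eq:drift-poly-lip}, \(|T_h|\le h^{-\alpha}\) and \eqref{eq:Th-nonexpansive}. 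Since \(\alpha\kappa<1/2\), this gives \(|\delta\psi|\le C|E|+C|\sum_r(\cdot)I^{(r)}|\) with \(h_j^{1-\alpha\kappa}\le1\). Hence \(\mathbb E_{j-1}|\delta\psi|^{2k}\le C|E|^{2k}\) for all \(k\), and \(\mathbb E_{j-1}|\delta\psi-E|^2\le Ch_j|E|^2\).

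\textbf{Step 2: the quadratic drift term.} Using the Lipschitz bound again,
\[
h_j^2|\delta F|^2\le Ch_j^{2-2\alpha\kappa}|\delta\psi|^2\le Ch_j|\delta\psi|^2 ,
\]
because \(2\alpha\kappa<1\). Together with Step 1 this yields \eqref{eq:increment-difference-stability}, and it also bounds the last term \(|h_j\delta F+\delta G+\delta M|^2\) in the expansion by \(Ch_j|E|^2\) in conditional mean.

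\textbf{Step 3: the cross term \(2h_j\langle E,\delta F\rangle\).} This is the main obstacle. I would write \(\langle E,\delta F\rangle=\langle\delta\psi,\delta F\rangle-\langle\delta\psi-E,\delta F\rangle\).
\begin{itemize}
\item For the first piece I need a one-sided Lipschitz property of \(f_{h_j}=f\circ T_{h_j}\). This is not automatic, since the projection onto a ball does not in general preserve monotonicity. My first attempt would show \(\langle x-y,f(t,T_hx)-f(t,T_hy)\rangle\le C|x-y|^2\) by splitting into the same three cases used in Lemma~\ref{lem:projection-error}, using \eqref{eq:drift-one-sided} at the projected points \(T_hx,T_hy\) together with the radial structure of \(T_h\).
\item If Step 3 fails, I would instead use the fact that the moments of \(Y,Z\) and of the stages are bounded, as in Lemma~\ref{lem:numerical-moment}. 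With this, \(\mathbb P(|\psi_j^\tau(Y)|>h_j^{-\alpha})\) decays faster than any power of \(h_j\), so on the complement event \(f_{h_j}=f\) and \eqref{eq:drift-one-sided} applies directly. On the exceptional event, H\"older's inequality with high moments gives a contribution \(\le Ch_j^3\); this is exactly where the additive \(Ch_j^3\) is consumed.
\item The second piece is bounded by Cauchy--Schwarz together with Steps 1 and 2: \(h_j|\langle\delta\psi-E,\delta F\rangle|\le \tfrac12|\delta\psi-E|^2+\tfrac12 h_j^2|\delta F|^2\), whose conditional mean is \(\le Ch_j|E|^2\).
\end{itemize}

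Collecting the three steps and taking full expectations gives \eqref{eq:one-step-stability}.
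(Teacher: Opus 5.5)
Your skeleton is correct and matches the paper: the exact expansion of the square, the vanishing of $\langle E,\delta G+\delta M\rangle$ under $\mathbb E_{j-1}$, the stage bounds of Step~1, $h_j^2|\delta F|^2\le Ch_j|\delta\psi|^2$ from $2\alpha\kappa<1$, and the Young-inequality bound for $h_j\langle\delta\psi-E,\delta F\rangle$.

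\textbf{The first bullet of Step~3 fails and should be discarded.} For $d\ge2$, $f\circ T_h$ is \emph{not} one-sided Lipschitz uniformly in $h$, so no case analysis can give it. Take $d=2$, $v=(e_1+e_2)/\sqrt2$ and $f(x)=-(v\cdot x)^3v$. This $f$ satisfies Assumption~\ref{ass:drift} with $\kappa=2$ and is even monotone. Put $R=h^{-\alpha}$, $x=\rho e_1$ with $\rho>R$, and $P=I-e_1e_1^\top$. Then $D(f\circ T_h)(x)=-\frac{3R^3}{2\rho}\,v\,(Pv)^\top$. For $w=(1,-\tfrac12)$ this gives $w^\top D(f\circ T_h)(x)w=\frac{3R^3}{16\rho}$, while $|w|^2=\tfrac54$. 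Take $y=x-\varepsilon w$ with $\varepsilon\downarrow0$, then let $\rho\downarrow R$. Any admissible one-sided constant must then be at least $\frac{3}{20}h^{-2\alpha}$. The reason is that the tangential factor $I-\hat x\hat x^\top$ in $DT_h$ destroys the sign structure of $Df$. So the trivial constant $Ch^{-\alpha\kappa}$ is sharp in order. A constant of that size is useless, because the factor $(1+Ch_j^{1-\alpha\kappa})$ does not survive the discrete Gronwall step.

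\textbf{Your fallback is correct and is essentially the paper's argument.} The paper applies \eqref{eq:drift-one-sided} at the projected stages $P_Y=T_{h_j}(\psi_j^\tau(Y))$ and $P_Z=T_{h_j}(\psi_j^\tau(Z))$. It then uses non-expansiveness to get $\langle P_Y-P_Z,\delta F\rangle\le K|\delta\psi|^2$. The defect $\langle(\psi_j^\tau(Y)-P_Y)-(\psi_j^\tau(Z)-P_Z),\delta F\rangle$ is bounded by the crude estimate $|\delta F|\le Ch_j^{-\alpha(\kappa+1)}$ times $\|\psi-T_{h_j}\psi\|_{L^2}\le C_\ell h_j^{\alpha(\ell-2)/2}$ from Lemma~\ref{lem:projection-error}. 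Your indicator split on $\{|\psi_j^\tau(Y)|,|\psi_j^\tau(Z)|\le h_j^{-\alpha}\}$ encodes the same rare-event estimate in a slightly different way.

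To close it you need two things:
\begin{itemize}
\item Moments of all orders for the stages. These follow directly from $|f_{h_j}(t,x)|\le C(1+|x|^{\kappa+1})$ and the moment hypothesis on $Y,Z$. Lemma~\ref{lem:numerical-moment} only bounds the stage's conditional second moment, so it does not supply them.
\item A choice of $\ell$ with $\tfrac{\alpha\ell}{2}-\alpha(\kappa+1)\ge2$. Then the exceptional event contributes at most $2h_j\cdot Ch_j^2=Ch_j^3$, which is exactly the additive term in \eqref{eq:one-step-stability}.
\end{itemize}
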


\begin{proof}
	Set
	\[
	\delta:=Y-Z,
	\qquad
	\Psi_Y:=\psi_j^\tau(Y),
	\qquad
	\Psi_Z:=\psi_j^\tau(Z).
	\]
	Moreover, define
	\[
	\Theta_j:=
	f_{h_j}(t_{j,\tau},\Psi_Y)
	-
	f_{h_j}(t_{j,\tau},\Psi_Z),
	\]
	\[
	\Xi_j:=
	\sum_{r=1}^m
	\bigl[
	g^r(t_{j-1},Y)-g^r(t_{j-1},Z)
	\bigr]
	I_{t_{j-1},t_j}^{(r)},
	\]
	and
	\[
	\Lambda_j:=
	\sum_{r_1,r_2=1}^m
	\bigl[
	g^{r_1,r_2}(t_{j-1},Y)
	-
	g^{r_1,r_2}(t_{j-1},Z)
	\bigr]
	I_{t_{j-1},t_j}^{(r_2,r_1)}.
	\]
	Then, by \eqref{eq:one-step-Phi-def},
	\[
	Y-Z+\Phi_j(Y)-\Phi_j(Z)
	=
	\delta+h_j\Theta_j+\Xi_j+\Lambda_j.
	\]
	We first collect some auxiliary estimates. Since \(T_{h_j}\) is non-expansive and
	\[
	f_{h_j}(t,x)=f(t,T_{h_j}(x)),
	\]
	the polynomial Lipschitz condition~\eqref{eq:drift-poly-lip} gives
	\begin{equation}
		\label{eq:fh-lip-h}
		|f_{h_j}(t,x)-f_{h_j}(t,y)|
		\le
		C h_j^{-\alpha \kappa}|x-y|.
	\end{equation}
	Indeed, \(|T_{h_j}(x)|,|T_{h_j}(y)|\le h_j^{-\alpha}\), and hence
	\[
	1+|T_{h_j}(x)|^\kappa+|T_{h_j}(y)|^\kappa
	\le
	C h_j^{-\alpha \kappa}.
	\]
	From the definition of \(\psi_j^\tau\), we have
	\begin{align}
		\label{Psi-decon}
		\Psi_Y-\Psi_Z
		&=
		\delta
		+
		\tau_jh_j
		\left[
		f_{h_j}(t_{j-1},Y)-f_{h_j}(t_{j-1},Z)
		\right]\nonumber\\
&\quad+
		\sum_{r=1}^m
		\left[
		g^r(t_{j-1},Y)-g^r(t_{j-1},Z)
		\right]
		I_{t_{j-1},t_{j,\tau}}^{(r)} .
	\end{align}
	By \eqref{eq:fh-lip-h},
\begin{equation}\label{eq:Psi-delta-f-estimate}
		\left\|
	\tau_jh_j
	\left[
	f_{h_j}(t_{j-1},Y)-f_{h_j}(t_{j-1},Z)
	\right]
	\right\|_{L^2(\Omega)}
	\le
	C h_j^{1-\alpha \kappa}\|\delta\|_{L^2(\Omega)}.
\end{equation}
	Moreover, by the global Lipschitz continuity of \(g^r\), we have
	\[
	\sum_{r=1}^m
	\left|
	g^r(t_{j-1},Y)-g^r(t_{j-1},Z)
	\right|^2
	\le
	C|Y-Z|^2
	=
	C|\delta|^2 .
	\]
	Set
	\[
	a_r:=
	g^r(t_{j-1},Y)-g^r(t_{j-1},Z),
	\qquad r=1,\ldots,m.
	\]
	Since \(Y\) and \(Z\) are \(\mathcal F_{j-1}^h\)-measurable, each \(a_r\) is
	\(\mathcal F_{j-1}^h\)-measurable. Put
	\[
	\mathcal H_j
	:=
	\mathcal F_{j-1}^h\vee\sigma(\tau_j).
	\]
	Conditioning on \(\mathcal H_j\), the value of \(\tau_j\) is fixed and the
	Brownian increments over
	\([t_{j-1},t_{j-1}+\tau_jh_j]\) are independent of \(\mathcal H_j\). Hence
	\[
	\mathbb E\left[
	I_{t_{j-1},t_{j,\tau}}^{(r)}
	I_{t_{j-1},t_{j,\tau}}^{(k)}
	\,\middle|\,
	\mathcal H_j
	\right]
	=
	\tau_jh_j\delta_{rk},
	\]
	where \(\delta_{rk}\) denotes the Kronecker delta. Therefore,
	\[
	\begin{aligned}
		&
		\mathbb E\left[
		\left|
		\sum_{r=1}^m
		a_r I_{t_{j-1},t_{j,\tau}}^{(r)}
		\right|^2
		\,\middle|\,
		\mathcal H_j
		\right]
		\\
		& =
		\sum_{r,k=1}^m
		\langle a_r,a_k\rangle
		\mathbb E\left[
		I_{t_{j-1},t_{j,\tau}}^{(r)}
		I_{t_{j-1},t_{j,\tau}}^{(k)}
		\,\middle|\,
		\mathcal H_j
		\right]
		\\
		& =
		\tau_jh_j
		\sum_{r=1}^m |a_r|^2
		\\
		& \le
		C h_j|\delta|^2 .
	\end{aligned}
	\]
	Taking expectations on both sides and using the tower property, we obtain
	\begin{align}
		\label{eq:Psi-gr-setimate}
		&
		\left\|
		\sum_{r=1}^m
		\left[
		g^r(t_{j-1},Y)-g^r(t_{j-1},Z)
		\right]
		I_{t_{j-1},t_{j,\tau}}^{(r)}
		\right\|_{L^2(\Omega)}^2\nonumber
		\\
		& =
		\mathbb E
		\left[
		\left|
		\sum_{r=1}^m
		a_r I_{t_{j-1},t_{j,\tau}}^{(r)}
		\right|^2
		\right]\nonumber
		\\
		& =
		\mathbb E
		\left[
		\mathbb E\left[
		\left|
		\sum_{r=1}^m
		a_r I_{t_{j-1},t_{j,\tau}}^{(r)}
		\right|^2
		\,\middle|\,
		\mathcal H_j
		\right]
		\right]\nonumber
		\\
		& \le
		C h_j\mathbb E|\delta|^2
		=
		C h_j\|\delta\|_{L^2(\Omega)}^2 .
	\end{align}
	Therefore, combining~\eqref{Psi-decon}, \eqref{eq:Psi-delta-f-estimate} and \eqref{eq:Psi-gr-setimate} gives
	\[
	\|\Psi_Y-\Psi_Z\|_{L^2(\Omega)}
	\le
	\left(
	1+Ch_j^{1-\alpha \kappa}+Ch_j^{1/2}
	\right)
	\|\delta\|_{L^2(\Omega)}.
	\]
	Since \(0<h_j\le1\) and $1-\alpha \kappa>0$, we obtain
	\begin{equation}
		\label{eq:psi-diff-estimate}
		\|\Psi_Y-\Psi_Z\|_{L^2(\Omega)}
		\le
		C\|\delta\|_{L^2(\Omega)}.
	\end{equation}
	Similarly, by \eqref{Psi-decon}, we can prove that
	\begin{align}
		\label{eq:psi-diff-perturb}
		\|\delta-(\Psi_Y-\Psi_Z)\|_{L^2(\Omega)}
		&\le
		C h_j^{1-\alpha \kappa}\|\delta\|_{L^2(\Omega)}
		+
		C h_j^{1/2}\|\delta\|_{L^2(\Omega)}
		\nonumber\\
		&\le
		C h_j^{1/2}\|\delta\|_{L^2(\Omega)},
	\end{align}
	where we used
	\[
	\alpha \kappa<\frac12,
	\]
	which follows from \(\alpha<1/(2(\kappa+1))\).
	
	Now set
	\[
	P_Y:=T_{h_j}(\Psi_Y),
	\qquad
	P_Z:=T_{h_j}(\Psi_Z).
	\]
	Then
	\[
	\Theta_j=f(t_{j,\tau},P_Y)-f(t_{j,\tau},P_Z).
	\]
	We estimate the drift cross term. Decompose
	\begin{align}
		\label{eq:drift-cross-decomposition}
		\langle \delta,\Theta_j\rangle
		&=
		\left\langle
		P_Y-P_Z,
		f(t_{j,\tau},P_Y)-f(t_{j,\tau},P_Z)
		\right\rangle
		\nonumber\\
		&\quad+
		\left\langle
		\delta-(P_Y-P_Z),
		f(t_{j,\tau},P_Y)-f(t_{j,\tau},P_Z)
		\right\rangle .
	\end{align}
	By the one-sided Lipschitz condition and \eqref{eq:Th-nonexpansive} in Lemma~\ref{lem:projection-error}, we have
	\[
	\left\langle
	P_Y-P_Z,
	f(t_{j,\tau},P_Y)-f(t_{j,\tau},P_Z)
	\right\rangle
	\le
	C|P_Y-P_Z|^2\le C |\Psi_Y-\Psi_Z|^2.
	\]
	Therefore, by \eqref{eq:psi-diff-estimate},
	\begin{equation}\label{eq:Delta-P-and-f}
		\mathbb E
\left[
\left\langle
P_Y-P_Z,
f(t_{j,\tau},P_Y)-f(t_{j,\tau},P_Z)
\right\rangle
\right]
\le
C\mathbb E|\delta|^2 .
	\end{equation}
	
	Next,
\begin{equation}\label{Delta-P-decomposition}
	\delta-(P_Y-P_Z)
=
\delta-(\Psi_Y-\Psi_Z)
+
(\Psi_Y-P_Y)
-
(\Psi_Z-P_Z).
\end{equation}
	By \eqref{eq:fh-lip-h} and \eqref{eq:psi-diff-estimate},
	\begin{equation}
		\label{eq:Thetaj-L2-bound}
		\|\Theta_j\|_{L^2(\Omega)}
		\le
		C h_j^{-\alpha \kappa}\|\delta\|_{L^2(\Omega)}.
	\end{equation}
	Hence, using H\"{o}lder's inequality \eqref{eq:psi-diff-perturb}, and \eqref{eq:Thetaj-L2-bound}, we obtain
	\begin{align}
		\label{eq:stage-cross-bound}
		&
		\mathbb E
		\left[
		\left|
		\left\langle
		\delta-(\Psi_Y-\Psi_Z),\Theta_j
		\right\rangle
		\right|
		\right]
		\nonumber\\
		&\le
		\|\delta-(\Psi_Y-\Psi_Z)\|_{L^2(\Omega)}\|\Theta_j\|_{L^2(\Omega)}
		\nonumber\\
		&\le
		C h_j^{1/2-\alpha \kappa}\|\delta\|_{L^2(\Omega)}^2
		\nonumber\\
		&\le
		C\mathbb E\left[|\delta|^2\right] .
	\end{align}
	
	It remains to control the projection remainders
	\[
	\Psi_Y-P_Y,
	\qquad
	\Psi_Z-P_Z.
	\]
	We first show that the stage variables have uniformly bounded moments of arbitrary
	finite order. Since \(|T_{h_j}(x)|\le |x|\), the growth condition on \(f\)~\eqref{eq:drift-growth} gives
	\[
	|f_{h_j}(t,x)|
	=
	|f(t,T_{h_j}(x))|
	\le
	C(1+|T_{h_j}(x)|^{\kappa+1})
	\le
	C(1+|x|^{\kappa+1}).
	\]
	By the definition of \(\Psi_Y\) and Minkowski's inequality,
	\[
	\begin{aligned}
		\|\Psi_Y\|_{L^q(\Omega)}
		&\le
		\|Y\|_{L^q(\Omega)}
		+
		h_j\|f_{h_j}(t_{j-1},Y)\|_{L^q(\Omega)}+
		\left\|
		\sum_{r=1}^m
		g^r(t_{j-1},Y)
		I_{t_{j-1},t_{j,\tau}}^{(r)}
		\right\|_{L^q(\Omega)}.
	\end{aligned}
	\]
Note that
	\[
	h_j\|f_{h_j}(t_{j-1},Y)\|_{L^q(\Omega)}
	\le
	Ch_j\left(1+\|Y\|_{L^{q(\kappa+1)}(\Omega)}^{\kappa+1}\right).
	\]
Moreover, by the linear growth of \(g^r\)~\eqref{eq:diffusion-growth}, using the following random-time
Brownian increment estimate, we have
\[
\left\|
\sum_{r=1}^m
g^r(t_{j-1},Y)I_{t_{j-1},t_{j,\tau}}^{(r)}
\right\|_{L^q(\Omega)}
\le
C_qh_j^{1/2}
\left(1+\|Y\|_{L^q(\Omega)}\right).
\]
Indeed, set
\[
\mathcal H_j:=\mathcal F_{j-1}^h\vee\sigma(\tau_j).
\]
Conditioning on \(\mathcal H_j\), the random variable \(\tau_j\) is fixed and
\[
I_{t_{j-1},t_{j,\tau}}^{(r)}
=
W^r(t_{j-1}+\tau_jh_j)-W^r(t_{j-1})
\]
is a Brownian increment over an interval of length \(\tau_jh_j\). Since
\(Y\) is \(\mathcal F_{j-1}^h\)-measurable, we get
\[
\begin{aligned}
	&
	\mathbb E\left[
	\left.
	\left|
	\sum_{r=1}^m
	g^r(t_{j-1},Y)I_{t_{j-1},t_{j,\tau}}^{(r)}
	\right|^q
	\right|
	\mathcal H_j
	\right]
	\\
	&\le
	C_q(\tau_jh_j)^{q/2}
	\left(
	\sum_{r=1}^m |g^r(t_{j-1},Y)|^2
	\right)^{q/2}
	\\
	&\le
	C_qh_j^{q/2}
	\left(
	1+|Y|^q
	\right).
\end{aligned}
\]
Taking expectations gives the desired estimate.
	Therefore, by the assumption $\mathbb E[|Y|^q]\leq C_q\text{, for all } q\geq1$, we have
	\[
	\|\Psi_Y\|_{L^q(\Omega)}
	\le
	\|Y\|_{L^q(\Omega)}
	+
	Ch_j\left(1+\|Y\|_{L^{q(\kappa+1)}(\Omega)}^{\kappa+1}\right)
	+
	Ch_j^{1/2}(1+\|Y\|_{L^q(\Omega)})
	\le C_q.
	\]
	The estimate for \(\Psi_Z\) is identical. Therefore, by \eqref{eq:projection-state-error-new} in
	Lemma~\ref{lem:projection-error}, choosing \(\ell\) sufficiently large,
	\begin{equation}
		\label{eq:projection-stage-tail}
		\|\Psi_Y-P_Y\|_{L^2(\Omega)}
		+
		\|\Psi_Z-P_Z\|_{L^2(\Omega)}
		\le
		C h_j^{\frac{\alpha}{2}(\ell-2)}.
	\end{equation}
	On the other hand, since \(|P_Y|,|P_Z|\le h_j^{-\alpha}\), the growth bound of
	\(f\)~\eqref{eq:drift-growth} gives
	\begin{equation}
		\label{eq:Thetaj-crude-bound}
		|\Theta_j|\leq C|1+h^{-\alpha(\kappa+1)}|\le C h_j^{-\alpha(\kappa+1)},
	\end{equation}
	where the last inequality we used the fact $h^{-\alpha(\kappa+1)}\geq1$. Choose \(\ell\) sufficiently large such that
	\[
	\frac{\alpha}{2}(\ell-2)-\alpha(\kappa+1)\ge 2.
	\]
	Then, by \eqref{eq:projection-stage-tail} and \eqref{eq:Thetaj-crude-bound},
	\begin{align}
		\label{eq:projection-cross-bound}
		&
		\mathbb E
		\left[
		\left|
		\left\langle
		(\Psi_Y-P_Y)-(\Psi_Z-P_Z),
		\Theta_j
		\right\rangle
		\right|
		\right]
		\nonumber\\
		&\le
		C h_j^{-\alpha(\kappa+1)}
		\left(
		\|\Psi_Y-P_Y\|_{L^2(\Omega)}
		+
		\|\Psi_Z-P_Z\|_{L^2(\Omega)}
		\right)
		\nonumber\\
		&\le
		C h_j^2.
	\end{align}
	Combining \eqref{eq:drift-cross-decomposition}--\eqref{Delta-P-decomposition}, \eqref{eq:stage-cross-bound} and \eqref{eq:projection-cross-bound},
	we obtain
	\begin{equation}
		\label{eq:drift-cross-final}
		\mathbb E\left[\langle\delta,\Theta_j\rangle\right]
		\le
		C\mathbb E\left[|\delta|^2\right]
		+
		C h_j^2.
	\end{equation}
	
	We also need a square estimate for the drift difference. By \eqref{eq:fh-lip-h}
	and \eqref{eq:psi-diff-estimate},
	\begin{align}
		\label{eq:drift-square-estimate}
		h_j^2\mathbb E\left[|\Theta_j|^2\right]
		&\le
		C h_j^2 h_j^{-2\alpha \kappa}
		\mathbb E\left[|\Psi_Y-\Psi_Z|^2\right]
		\nonumber\\
		&\le
		C h_j^{2-2\alpha \kappa}
		\mathbb E\left[|\delta|^2\right]
		\nonumber\\
		&\le
		C h_j\mathbb E\left[|\delta|^2\right],
	\end{align}
	where we used \(\alpha \kappa<1/2\).
	
	For the diffusion part, recall our notations:
		\[
	\Xi_j:=
	\sum_{r=1}^m
	\bigl[
	g^r(t_{j-1},Y)-g^r(t_{j-1},Z)
	\bigr]
	I_{t_{j-1},t_j}^{(r)},
	\]
	\[
	\Lambda_j:=
	\sum_{r_1,r_2=1}^m
	\bigl[
	g^{r_1,r_2}(t_{j-1},Y)
	-
	g^{r_1,r_2}(t_{j-1},Z)
	\bigr]
	I_{t_{j-1},t_j}^{(r_2,r_1)}.
	\]
	The global Lipschitz continuity of \(g^r\)~\eqref{eq:g-lip} gives
	\begin{equation}
		\label{eq:Xi-square-estimate}
		\mathbb E\left[|\Xi_j|^2\right]
		\le
		C h_j\mathbb E\left[|\delta|^2\right].
	\end{equation}
	Similarly, by the global Lipschitz continuity of \(g^{r_1,r_2}\)~\eqref{eq:g12-lip} and the
	second-moment estimate of the iterated It\^o integrals,
	\begin{equation}
		\label{eq:Lambda-square-estimate}
		\mathbb E\left[|\Lambda_j|^2\right]
		\le
		C h_j^2\mathbb E\left[|\delta|^2\right]
		\le
		C h_j\mathbb E\left[|\delta|^2\right].
	\end{equation}
	
	Finally, expanding the square and using
	\[
	|h_j\Theta_j+\Xi_j+\Lambda_j|^2
	\le
	C h_j^2|\Theta_j|^2+C|\Xi_j|^2+C|\Lambda_j|^2,
	\]
	we get
	\begin{align}	
		\mathbb E
		\left[\left|
		\delta+h_j\Theta_j+\Xi_j+\Lambda_j
		\right|^2\right]
		&\le
		\mathbb E\left[|\delta|^2\right]
		+
		2h_j\mathbb E\left[\langle\delta,\Theta_j\rangle\right]
		+
		2\mathbb E\left[\langle\delta,\Xi_j\rangle\right]
		+
		2\mathbb E\left[\langle\delta,\Lambda_j\rangle\right]
		\nonumber\\
		&\quad+
		C h_j^2\mathbb E\left[|\Theta_j|^2\right]
		+
		C\mathbb E\left[|\Xi_j|^2\right]
		+
		C\mathbb E\left[|\Lambda_j|^2\right] \nonumber.
	\end{align}
	Since \(\delta\) is \(\mathcal F_{j-1}^h\)-measurable and the Brownian increments
	and iterated It\^o integrals have zero conditional mean, we have
	\[
	\mathbb E\left[\langle\delta,\Xi_j\rangle\right]=0,
	\qquad
	\mathbb E\left[\langle\delta,\Lambda_j\rangle\right]=0.
	\]
	Using \eqref{eq:drift-cross-final}, \eqref{eq:drift-square-estimate},
	\eqref{eq:Xi-square-estimate}, and \eqref{eq:Lambda-square-estimate}, we get
	\[
	\mathbb E
	\left[\left|
	\delta+h_j\Theta_j+\Xi_j+\Lambda_j
	\right|^2\right]
	\le
	(1+Ch_j)\mathbb E\left[|\delta|^2\right]
	+
	Ch_j^3.
	\]
	This proves \eqref{eq:one-step-stability}.
	
	It remains to prove \eqref{eq:increment-difference-stability}. Since
	\[
	\Phi_j(Y)-\Phi_j(Z)=h_j\Theta_j+\Xi_j+\Lambda_j,
	\]
	we have
	\[
	\mathbb E\left[|\Phi_j(Y)-\Phi_j(Z)|^2\right]
	\le
	C h_j^2\mathbb E\left[|\Theta_j|^2\right]
	+
	C\mathbb E\left[|\Xi_j|^2\right]
	+
	C\mathbb E\left[|\Lambda_j|^2\right] .
	\]
	Using \eqref{eq:drift-square-estimate}, \eqref{eq:Xi-square-estimate}, and
	\eqref{eq:Lambda-square-estimate}, we obtain
	\[
	\mathbb E\left[|\Phi_j(Y)-\Phi_j(Z)|^2\right]
	\le
	Ch_j\mathbb E\left[|Y-Z|^2\right] .
	\]
	This proves \eqref{eq:increment-difference-stability}.
\end{proof}
The following lemma provides a local estimate for the internal randomized stage.
It shows that, when the stage is initialized at the exact solution
\(X(t_{j-1})\), it approximates the exact solution at the randomized time
\(t_{j,\tau}\) with order one in \(L^q(\Omega)\). This estimate will be used as a
local consistency ingredient in the convergence analysis.

\begin{lemma}[Local stage estimate]
	\label{lem:local-stage-estimate}
	Let Assumptions~\ref{ass:X0}--\ref{ass:g} hold. Let
$
	t_{j,\tau}:=t_{j-1}+\tau_jh_j .
$
	We use the same internal stage map \(\psi_j^\tau(\cdot)\) as in
	Lemma~\ref{lem:one-step-stability}. In particular,
	\[
	\begin{aligned}
		\psi_j^\tau\bigl(X(t_{j-1})\bigr)
		&=
		X(t_{j-1})
		+
		\tau_jh_j f_{h_j}(t_{j-1},X(t_{j-1}))
		\\
		&\quad+
		\sum_{r=1}^m
		g^r(t_{j-1},X(t_{j-1}))
		I_{t_{j-1},t_{j,\tau}}^{(r)} .
	\end{aligned}
	\]
	Then, for every \(q\ge2\), there exists a constant \(C_q>0\), independent of
	\(h_j\), such that
	\begin{equation}
		\label{eq:local-stage-moment}
		\left\|
		\psi_j^\tau\bigl(X(t_{j-1})\bigr)
		\right\|_{L^q(\Omega)}
		\le C_q,
	\end{equation}
	and
	\begin{equation}
		\label{eq:local-stage-error}
		\left\|
		X(t_{j,\tau})
		-
		\psi_j^\tau\bigl(X(t_{j-1})\bigr)
		\right\|_{L^q(\Omega)}
		\le C_q h_j .
	\end{equation}
\end{lemma}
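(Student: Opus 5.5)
The moment bound \eqref{eq:local-stage-moment} follows exactly as the stage moment estimate in the proof of Lemma~\ref{lem:one-step-stability}, now with $Y=X(t_{j-1})$, whose moments of all orders are bounded by Lemma~\ref{lem:exact-moment}. Apply Minkowski's inequality to the three terms of $\psi_j^\tau(X(t_{j-1}))$. For the drift term use $|f_{h_j}(t,x)|\le C(1+|x|^{\kappa+1})$, which holds since $|T_{h_j}(x)|\le|x|$, together with the $L^{q(\kappa+1)}$ bound on $X(t_{j-1})$. For the diffusion term condition on $\mathcal H_j=\mathcal F_{j-1}^h\vee\sigma(\tau_j)$: the increment $I^{(r)}_{t_{j-1},t_{j,\tau}}$ is then Gaussian with variance $\tau_jh_j\le h_j$, which gives the bound $C_qh_j^{1/2}(1+\|X(t_{j-1})\|_{L^q})$.

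For the error estimate \eqref{eq:local-stage-error}, write the exact solution at the random time as
\[
X(t_{j,\tau})=X(t_{j-1})+\int_{t_{j-1}}^{t_{j,\tau}}f(u,X(u))\,du+\sum_{r=1}^m\int_{t_{j-1}}^{t_{j,\tau}}g^r(u,X(u))\,dW^r(u).
\]
This is legitimate pathwise, because $\tau_j$ is independent of $W$ and $X_0$. Subtracting the stage splits the error into three terms:
\[
E_1=\int_{t_{j-1}}^{t_{j,\tau}}\bigl[f(u,X(u))-f(t_{j-1},X(t_{j-1}))\bigr]du,
\qquad
E_2=\tau_jh_j\bigl[f(t_{j-1},X(t_{j-1}))-f_{h_j}(t_{j-1},X(t_{j-1}))\bigr],
\]
\[
E_3=\sum_{r=1}^m\int_{t_{j-1}}^{t_{j,\tau}}\bigl[g^r(u,X(u))-g^r(t_{j-1},X(t_{j-1}))\bigr]dW^r(u).
\]

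\textbf{Bounding $E_1$ and $E_2$.} For $E_1$ it is simplest to bound each integrand crudely by the growth bound \eqref{eq:drift-growth} and Lemma~\ref{lem:exact-moment}, which gives $\|E_1\|_{L^q}\le C_qh_j$ directly. One could also use \eqref{eq:Y-holder-Lq} to get $h_j^{3/2}$, but order $h_j$ suffices. For $E_2$, apply the pointwise bound $|f(t,Z)-f_{h_j}(t,Z)|\le C|Z|^{\kappa+1}\mathbf 1_{\{|Z|>h_j^{-\alpha}\}}$ from the proof of Lemma~\ref{lem:projection-error}. In $L^q$, Markov's inequality then gives $C_\ell h_j^{\alpha(\ell-q(\kappa+1))/q}$. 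Since $X(t_{j-1})$ has all moments, we can take $\ell$ large, so this is $\le C$. Even the trivial bound $\|E_2\|_{L^q}\le Ch_j(1+\|X\|^{\kappa+1})$ already gives order $h_j$, so the projection needs no delicate handling here.

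\textbf{Bounding $E_3$ (the main obstacle).} The stochastic integral runs up to a random time. I would handle this by conditioning on $\tau_j=\theta$: by independence, the conditional law equals that of the integral up to the deterministic time $t_{j-1}+\theta h_j$. The Burkholder--Davis--Gundy inequality then gives
\[
\mathbb E_W|E_3|^q\le C_q\Bigl(\theta h_j\Bigr)^{q/2-1}\int_{t_{j-1}}^{t_{j-1}+\theta h_j}\|g^r(u,X(u))-g^r(t_{j-1},X(t_{j-1}))\|_{L^q(\Omega_W)}^q\,du.
\]
By \eqref{eq:g-time-lip}, \eqref{eq:g-lip} and \eqref{eq:exact-holder-bound}, the integrand is $\le C_q(u-t_{j-1})^{q/2}$. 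Hence $\mathbb E_W|E_3|^q\le C_qh_j^{q}$, and integrating over $\theta\in(0,1)$ gives $\|E_3\|_{L^q(\Omega)}\le C_qh_j$. Summing the three bounds proves \eqref{eq:local-stage-error}.
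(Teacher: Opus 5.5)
Your proposal is correct and follows essentially the same route as the paper. It uses the same Minkowski-plus-conditioning-on-$\mathcal H_j$ argument for \eqref{eq:local-stage-moment} and the same split of the error into a drift remainder (your $E_1+E_2$, which the paper treats as one term $\mathcal R_j^{\mathrm f}$) and a diffusion remainder $E_3=\mathcal R_j^{\mathrm g}$; the differences are only technical variants. You bound $E_1$ by the growth condition rather than by the H\"older estimate \eqref{eq:Y-holder-Lq}, which would give the sharper but unneeded order $h_j^{3/2}$. You also handle the random upper limit in $E_3$ by conditioning on $\tau_j=\theta$ and applying BDG at a deterministic time, whereas the paper inserts $\mathbf 1_{\{s\le t_{j,\tau}\}}$ and applies BDG in the filtration enlarged by $\sigma(\tau_j)$.
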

\begin{proof}
	For the moment estimate, by the definition of
	\(\psi_j^\tau\bigl(X(t_{j-1})\bigr)\) and Minkowski's inequality,
	\[
	\begin{aligned}
		\left\|
		\psi_j^\tau\bigl(X(t_{j-1})\bigr)
		\right\|_{L^q(\Omega)}
		&\le
		\|X(t_{j-1})\|_{L^q(\Omega)}
		+
		h_j
		\|f_{h_j}(t_{j-1},X(t_{j-1}))\|_{L^q(\Omega)}
		\\
		&\quad+
		\left\|
		\sum_{r=1}^m
		g^r(t_{j-1},X(t_{j-1}))
		I_{t_{j-1},t_{j,\tau}}^{(r)}
		\right\|_{L^q(\Omega)} .
	\end{aligned}
	\]
	For the last term, using the same conditioning argument as in the proof of
	Lemma~\ref{lem:one-step-stability}, with
$
	\mathcal H_j:=\mathcal F_{j-1}^h\vee\sigma(\tau_j),
$
	we have
	\[
	\begin{aligned}
		&\mathbb E\left[
		\left.
		\left|
		\sum_{r=1}^m
		g^r(t_{j-1},X(t_{j-1}))
		I_{t_{j-1},t_{j,\tau}}^{(r)}
		\right|^q
		\right|
		\mathcal H_j
		\right]\le
		C_q h_j^{q/2}
		\left(
		\sum_{r=1}^m
		|g^r(t_{j-1},X(t_{j-1}))|^2
		\right)^{q/2}.
	\end{aligned}
	\]
	Taking expectations and using the linear growth of \(g^r\) \eqref{eq:diffusion-growth} and Lemma~\ref{lem:exact-moment}, we obtain
	\[
	\left\|
	\sum_{r=1}^m
	g^r(t_{j-1},X(t_{j-1}))
	I_{t_{j-1},t_{j,\tau}}^{(r)}
	\right\|_{L^q(\Omega)}
	\le
	C_q h_j^{1/2}
	\left(
	1+\|X(t_{j-1})\|_{L^q(\Omega)}
	\right).
	\]
	Therefore, by the fact $T_{h_j}(x)\leq x$, \eqref{eq:drift-growth} and Lemma~\ref{lem:exact-moment},
\begin{align}
		\|f_{h_j}(t_{j-1},X(t_{j-1}))\|_{L^q(\Omega)}&\leq \|f(t,T_{h_j}(X(t_{j-1})))\|_{L^q(\Omega)}\nonumber\\
	&\le
	C\left(
	1+\|X(t_{j-1})\|_{L^{q(\kappa+1)}(\Omega)}^{\kappa+1}
	\right)\nonumber\\
	&\le C_q\nonumber.
\end{align}
	Combining the above estimates and using Lemma~\ref{lem:exact-moment} again, we
	obtain
	\[
	\left\|
	\psi_j^\tau\bigl(X(t_{j-1})\bigr)
	\right\|_{L^q(\Omega)}
	\le C_q,
	\]
	which proves \eqref{eq:local-stage-moment}.
	
	We now prove \eqref{eq:local-stage-error}. By the integral form of the exact
	solution and the definition of \(\psi_j^\tau\), we have
	\[
	X(t_{j,\tau})
	-
	\psi_j^\tau\bigl(X(t_{j-1})\bigr)
	=
	\mathcal R_j^{\mathrm f}
	+
	\mathcal R_j^{\mathrm g},
	\]
	where
	\[
	\mathcal R_j^{\mathrm f}
	:=
	\int_{t_{j-1}}^{t_{j,\tau}}
	\left[
	f(s,X(s))
	-
	f_{h_j}(t_{j-1},X(t_{j-1}))
	\right]ds,
	\]
	and
	\[
	\mathcal R_j^{\mathrm g}
	:=
	\sum_{r=1}^m
	\int_{t_{j-1}}^{t_{j,\tau}}
	\left[
	g^r(s,X(s))
	-
	g^r(t_{j-1},X(t_{j-1}))
	\right]dW^r(s).
	\]
	
	For the drift part, we write
	\[
	\begin{aligned}
		\mathcal R_j^{\mathrm f}
		&=
		\int_{t_{j-1}}^{t_{j,\tau}}
		\left[
		f(s,X(s))
		-
		f(t_{j-1},X(t_{j-1}))
		\right]ds
		\\
		&\quad+
		(t_{j,\tau}-t_{j-1})
		\left[
		f(t_{j-1},X(t_{j-1}))
		-
		f_{h_j}(t_{j-1},X(t_{j-1}))
		\right].
	\end{aligned}
	\]
	Since \(t_{j,\tau}\le t_j\), by Jensen's inequality,
	\[
	\begin{aligned}
		&
		\left\|
		\int_{t_{j-1}}^{t_{j,\tau}}
		\left[
		f(s,X(s))
		-
		f(t_{j-1},X(t_{j-1}))
		\right]ds
		\right\|_{L^q(\Omega)}\le
		\int_{t_{j-1}}^{t_j}
		\left\|
		f(s,X(s))
		-
		f(t_{j-1},X(t_{j-1}))
		\right\|_{L^q(\Omega)}
		ds.
	\end{aligned}
	\]
	By \eqref{eq:Y-holder-Lq} in Lemma~\ref{lem:quadrature}, for every
	\(q\ge2\), we can derive that
	\[
	\left\|
	f(s,X(s))
	-
	f(t_{j-1},X(t_{j-1}))
	\right\|_{L^q(\Omega)}
	\le
	C_q |s-t_{j-1}|^{1/2}.
	\]
	Therefore,
	\[
	\left\|
	\int_{t_{j-1}}^{t_{j,\tau}}
	\left[
	f(s,X(s))
	-
	f(t_{j-1},X(t_{j-1}))
	\right]ds
	\right\|_{L^q(\Omega)}
	\le
	C_q h_j^{3/2}.
	\]
	Moreover, by the growth condition on \(f\)~\eqref{eq:drift-growth}, the definition of \(f_{h_j}\), and
	\(|T_{h_j}(x)|\le |x|\), we have
	\[
	\begin{aligned}
		&
		|f(t_{j-1},X(t_{j-1}))
		-
		f_{h_j}(t_{j-1},X(t_{j-1}))|
		\\
		&\le
		|f(t_{j-1},X(t_{j-1}))|
		+
		|f(t_{j-1},T_{h_j}(X(t_{j-1})))|
		\\
		&\le
		C\left(1+|X(t_{j-1})|^{\kappa+1}\right).
	\end{aligned}
	\]
	Hence, by Lemma~\ref{lem:exact-moment},
	\[
	\begin{aligned}
		&
		\|f(t_{j-1},X(t_{j-1}))
		-
		f_{h_j}(t_{j-1},X(t_{j-1}))\|_{L^q(\Omega)}
		\\
		&\le
		C\left(
		1+\|X(t_{j-1})\|_{L^{q(\kappa+1)}(\Omega)}^{\kappa+1}
		\right)
		\le C_q .
	\end{aligned}
	\]
	Since \(0\le t_{j,\tau}-t_{j-1}\le h_j\), we obtain
	\[
	\|\mathcal R_j^{\mathrm f}\|_{L^q(\Omega)}
	\le
	C_q h_j^{3/2}
	+
	C_q h_j
	\le
	C_q h_j.
	\]
	
For the diffusion part, we write
\[
\mathcal R_j^{\mathrm g}
=
\sum_{r=1}^m
\int_{t_{j-1}}^{t_j}
\mathbf 1_{\{s\le t_{j,\tau}\}}
\left[
g^r(s,X(s))-g^r(t_{j-1},X(t_{j-1}))
\right]dW^r(s).
\]
Since \(\tau_j\) is independent of the Brownian motion, the Brownian motion
remains Brownian after enlarging the filtration by \(\sigma(\tau_j)\). Hence
the Burkholder--Davis--Gundy inequality can be applied to the above stochastic
integral. Therefore,
\[
\begin{aligned}
	\|\mathcal R_j^{\mathrm g}\|_{L^q(\Omega)}
	&\le
	C_q
	\left\|
	\left(
	\int_{t_{j-1}}^{t_j}
	\sum_{r=1}^m
	\mathbf 1_{\{s\le t_{j,\tau}\}}
	\left|
	g^r(s,X(s))-g^r(t_{j-1},X(t_{j-1}))
	\right|^2 ds
	\right)^{1/2}
	\right\|_{L^q(\Omega)}
	\\
	&\le
	C_q
	\left(
	\int_{t_{j-1}}^{t_j}
	\sum_{r=1}^m
	\left\|
	g^r(s,X(s))-g^r(t_{j-1},X(t_{j-1}))
	\right\|_{L^q(\Omega)}^2 ds
	\right)^{1/2}.
\end{aligned}
\]
Here, in the last step, we used Minkowski's integral inequality and
\(\mathbf 1_{\{s\le t_{j,\tau}\}}\le1\). On the other hand,
	using the time Lipschitz continuity~\eqref{eq:g-time-lip} and the spatial Lipschitz continuity of
	\(g^r\)~\eqref{eq:g-lip}, together with Lemma~\ref{lem:exact-moment}, we get
	\[
	\begin{aligned}
		&
		\|g^r(s,X(s))-g^r(t_{j-1},X(t_{j-1}))\|_{L^q(\Omega)}
		\\
		&\le
		\|g^r(s,X(s))-g^r(t_{j-1},X(s))\|_{L^q(\Omega)}
		\\
		&\quad+
		\|g^r(t_{j-1},X(s))-g^r(t_{j-1},X(t_{j-1}))\|_{L^q(\Omega)}
		\\
		&\le
		C_q|s-t_{j-1}|
		+
		C\|X(s)-X(t_{j-1})\|_{L^q(\Omega)}
		\\
		&\le
		C_q|s-t_{j-1}|^{1/2}.
	\end{aligned}
	\]
	Therefore,
	\[
	\|\mathcal R_j^{\mathrm g}\|_{L^q(\Omega)}
	\le
	C_q
	\left(
	\int_{t_{j-1}}^{t_j}
	(s-t_{j-1})\,ds
	\right)^{1/2}
	\le
	C_q h_j.
	\]
	Combining the estimates for \(\mathcal R_j^{\mathrm f}\) and
	\(\mathcal R_j^{\mathrm g}\), we obtain
	\[
	\left\|
	X(t_{j,\tau})
	-
	\psi_j^\tau\bigl(X(t_{j-1})\bigr)
	\right\|_{L^q(\Omega)}
	\le
	C_q h_j.
	\]
	This proves \eqref{eq:local-stage-error}.
\end{proof}
%The following lemma estimates the one-step local residual obtained by applying
%the numerical one-step map to the exact solution at the previous grid point. It
%measures the discrepancy between the exact increment over one time step and the
%corresponding projected drift-randomized Milstein increment. In particular, the
%lemma provides both an \(L^2\)-estimate for the local residual and a sharper
%estimate for its conditional mean, which are the two local consistency bounds
%needed for the subsequent mean-square convergence analysis.
The following lemma provides the local residual estimates for the projected
drift-randomized Milstein method. It compares the exact one-step increment with
the numerical one-step increment initialized at the exact value
\(X(t_{j-1})\). The resulting \(L^2\)-bound and conditional mean estimate form
the local consistency part of the convergence analysis.
\begin{lemma}[Local residual estimates]
	\label{lem:local-residual}
	Let Assumptions~\ref{ass:X0}--\ref{ass:g} hold.
	For \(j=1,\ldots,N\), define
	\begin{equation}
		\label{eq:local-residual-def}
		\rho_j
		:=
		X(t_j)-X(t_{j-1})-\Phi_j(X(t_{j-1})),
	\end{equation}
	where \(\Phi_j(\cdot)\) is defined in \eqref{eq:one-step-Phi-def} of Lemma~\ref{lem:one-step-stability}. Then
	there exists a constant \(C>0\), independent of \(h_j\), such that
	\begin{equation}
		\label{eq:local-residual-L2}
		\|\rho_j\|_{L^2(\Omega)}
		\le
		Ch_j^{3/2},
	\end{equation}
	and
	\begin{equation}
		\label{eq:local-residual-mean}
		\left\|
		\mathbb E\left[\rho_j\mid\mathcal F_{j-1}^h\right]
		\right\|_{L^2(\Omega)}
		\le
		Ch_j^2 .
	\end{equation}
\end{lemma}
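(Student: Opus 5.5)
We decompose the residual into the pieces already controlled by the earlier lemmas. Writing $X(t_j)-X(t_{j-1})=\int_{t_{j-1}}^{t_j}f(s,X(s))\,ds+\sum_r\int_{t_{j-1}}^{t_j}g^r(s,X(s))\,dW^r(s)$ and inserting $h_j\mathcal Y(t_{j,\tau})=h_jf(t_{j,\tau},X(t_{j,\tau}))$, we obtain
\[
\rho_j = A_j^\tau + \mathcal M_j + h_j\bigl[f(t_{j,\tau},X(t_{j,\tau}))-f_{h_j}(t_{j,\tau},\psi_j^\tau(X(t_{j-1})))\bigr],
\]
where $A_j^\tau$ is the quadrature error of Lemma~\ref{lem:quadrature} and $\mathcal M_j$ is the Milstein diffusion residual of Lemma~\ref{lem:milstein-residual}. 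The last bracket is further split as $E_1+E_2$, where
\[
E_1:=f(t_{j,\tau},X(t_{j,\tau}))-f_{h_j}(t_{j,\tau},X(t_{j,\tau}))
\]
is the projection error, and
\[
E_2:=f_{h_j}(t_{j,\tau},X(t_{j,\tau}))-f_{h_j}(t_{j,\tau},\psi_j^\tau(X(t_{j-1})))
\]
is the stage error.

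For \eqref{eq:local-residual-L2}, Lemmas~\ref{lem:quadrature} and \ref{lem:milstein-residual} give $Ch_j^{3/2}$ for the first two terms. For $E_1$, we apply \eqref{eq:projection-drift-error-order-one-new} with $Z=X(t_{j,\tau})$, whose moments of every order are bounded uniformly by Lemma~\ref{lem:exact-moment}, since $\tau_j$ is independent of $W$. This gives $\|E_1\|_{L^2}\le Ch_j$, so $h_j\|E_1\|\le Ch_j^2$. For $E_2$, we avoid the $h_j^{-\alpha\kappa}$ Lipschitz constant. Instead, we use the polynomial Lipschitz bound \eqref{eq:drift-poly-lip} together with the non-expansiveness and $|T_h(x)|\le|x|$ from Lemma~\ref{lem:projection-error}:
\[
|E_2|\le C\bigl(1+|X(t_{j,\tau})|^\kappa+|\psi_j^\tau(X(t_{j-1}))|^\kappa\bigr)\,\bigl|X(t_{j,\tau})-\psi_j^\tau(X(t_{j-1}))\bigr|.
\]
Hölder's inequality, \eqref{eq:local-stage-moment} and \eqref{eq:local-stage-error} with $q=4$ then give $\|E_2\|_{L^2}\le Ch_j$, so $h_j\|E_2\|\le Ch_j^2\le Ch_j^{3/2}$.

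For the conditional mean estimate \eqref{eq:local-residual-mean}, the mean-zero identities \eqref{eq:local-quadrature-error-mean} and \eqref{eq:milstein-residual-local-mean} show that $\mathbb E[A_j^\tau+\mathcal M_j\mid\mathcal F_{j-1}^h]=0$. Hence $\mathbb E[\rho_j\mid\mathcal F_{j-1}^h]=h_j\mathbb E[E_1+E_2\mid\mathcal F_{j-1}^h]$. Since conditional expectation is an $L^2$-contraction, we get
\[
\bigl\|\mathbb E[\rho_j\mid\mathcal F_{j-1}^h]\bigr\|_{L^2}\le h_j\bigl(\|E_1\|_{L^2}+\|E_2\|_{L^2}\bigr)\le Ch_j^2.
\]
The main point to verify is the order-one bound for $E_2$. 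It relies crucially on the stage estimate being of order $h_j$ in $L^4$ and not merely $h_j^{1/2}$. Equally essential is bounding the polynomial weight using the unprojected moments, since using the crude projected Lipschitz constant $h_j^{-\alpha\kappa}$ would lose a fractional power.

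One technical point must also be checked: the measurability needed for the conditional-mean identities. $\mathcal M_j$ is an Itô integral driven by Brownian increments after $t_{j-1}$, so its conditional mean given $\mathcal F_{j-1}^h$ vanishes. For $A_j^\tau$, the identity is exactly \eqref{eq:local-quadrature-error-mean}. No further cancellation is required, because the drift-type terms are already of order $h_j^2$ in $L^2$.
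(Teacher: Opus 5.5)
Your proposal is correct and follows essentially the same route as the paper. It uses the same split into the randomized quadrature error, the Milstein diffusion residual, a stage error bounded via the polynomial Lipschitz condition, H\"older's inequality and the $L^4$ estimate of Lemma~\ref{lem:local-stage-estimate}, and a projection error bounded via \eqref{eq:projection-drift-error-order-one-new}, with the conditional mean reduced to the two $O(h_j^2)$ drift terms. The only difference is the order of the split: the paper takes the stage error for the unprojected $f$ and the projection error at $\psi_j^\tau(X(t_{j-1}))$, whereas you take the projection error at $X(t_{j,\tau})$ and the stage error for $f_{h_j}$ (hence your use of non-expansiveness and $|T_{h_j}(x)|\le|x|$), and both orderings work equally well.
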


\begin{proof}
	Recall that
	\[
	t_{j,\tau}:=t_{j-1}+\tau_jh_j .
	\]
	By the definition of \(\Phi_j(\cdot)\)~\eqref{eq:one-step-Phi-def}, we decompose \(\rho_j\) as
	\[
	\rho_j
	=
	\mathcal Q_j+\mathcal S_j+\mathcal P_j+\mathcal M_j,
	\]
	where
	\[
	\mathcal Q_j
	:=
	\int_{t_{j-1}}^{t_j} f(s,X(s))\,ds
	-
	h_j f(t_{j,\tau},X(t_{j,\tau})),
	\]
	\[
	\mathcal S_j
	:=
	h_j
	\left[
	f(t_{j,\tau},X(t_{j,\tau}))
	-
	f\left(t_{j,\tau},\psi_j^\tau(X(t_{j-1}))\right)
	\right],
	\]
	\[
	\mathcal P_j
	:=
	h_j
	\left[
	f\left(t_{j,\tau},\psi_j^\tau(X(t_{j-1}))\right)
	-
	f_{h_j}\left(t_{j,\tau},\psi_j^\tau(X(t_{j-1}))\right)
	\right],
	\]
	and
	\begin{align*}
		\mathcal M_j
		:=
		&
		\sum_{r=1}^m
		\int_{t_{j-1}}^{t_j}g^r(s,X(s))\,dW^r(s)
		-
		\sum_{r=1}^m
		g^r(t_{j-1},X(t_{j-1}))
		I_{t_{j-1},t_j}^{(r)}
		\\
		&-
		\sum_{r_1,r_2=1}^m
		g^{r_1,r_2}(t_{j-1},X(t_{j-1}))
		I_{t_{j-1},t_j}^{(r_2,r_1)},
	\end{align*}
is the same as in Lemma~\ref{lem:milstein-residual}.	We estimate these four terms separately.
	
	First, by Lemma~\ref{lem:quadrature}, with
	\[
	\mathcal Y(t):=f(t,X(t)),
	\]
	we have
	\begin{equation}
		\label{eq:Qj-L2-bound}
		\|\mathcal Q_j\|_{L^2(\Omega)}
		\le
		Ch_j^{3/2},
	\end{equation}
	and
	\begin{equation}
		\label{eq:Qj-conditional-zero}
		\mathbb E[\mathcal Q_j\mid\mathcal F_{j-1}^h]=0.
	\end{equation}
	
	Second, we estimate the stage residual \(\mathcal S_j\). By the polynomial
	Lipschitz condition on \(f\)~\eqref{eq:drift-poly-lip} H\"{o}lder's inequality, we can get
	\begin{align*}
		\|\mathcal S_j\|_{L^2(\Omega)}
		&\le
		Ch_j
		\left\|
		\left(
		1+|X(t_{j,\tau})|^\kappa
		+
		\left|\psi_j^\tau(X(t_{j-1}))\right|^\kappa
		\right)
		\right.
\left.
		\left|
		X(t_{j,\tau})
		-
		\psi_j^\tau(X(t_{j-1}))
		\right|
		\right\|_{L^2(\Omega)}
		\\
		&\le
		Ch_j
		\left\|
		1+|X(t_{j,\tau})|^\kappa
		+
		\left|\psi_j^\tau(X(t_{j-1}))\right|^\kappa
		\right\|_{L^4(\Omega)}
		\left\|
		X(t_{j,\tau})
		-
		\psi_j^\tau(X(t_{j-1}))
		\right\|_{L^4(\Omega)} .
	\end{align*}
	By Lemma~\ref{lem:exact-moment} and Lemma~\ref{lem:local-stage-estimate}, the
	first factor is bounded uniformly in \(j\) and \(h_j\). Moreover, by
	Lemma~\ref{lem:local-stage-estimate},
	\[
	\left\|
	X(t_{j,\tau})
	-
	\psi_j^\tau(X(t_{j-1}))
	\right\|_{L^4(\Omega)}
	\le
	Ch_j .
	\]
	Therefore,
	\begin{equation}
		\label{eq:Sj-L2-bound}
		\|\mathcal S_j\|_{L^2(\Omega)}
		\le
		Ch_j^2 .
	\end{equation}
	
	Third, we estimate the projection residual \(\mathcal P_j\). By
	Lemma~\ref{lem:local-stage-estimate}, the random variable
$
	\psi_j^\tau(X(t_{j-1}))
$
	has moments of arbitrary finite order. Hence we may choose
	\[
	\ell\ge 2(\kappa+1)+\frac{2}{\alpha}
	\]
	such that
	\[
	\sup_{1\le j\le N}
	\left\|
	\psi_j^\tau(X(t_{j-1}))
	\right\|_{L^\ell(\Omega)}
	<\infty .
	\]
	Since the estimate \eqref{eq:projection-drift-error-order-one-new} in Lemma~\ref{lem:projection-error} is uniform in the time
	variable, we may apply it with
$
	Z=\psi_j^\tau(X(t_{j-1}))
$
	and \(t=t_{j,\tau}\). This gives
	\[
	\left\|
	f\left(t_{j,\tau},\psi_j^\tau(X(t_{j-1}))\right)
	-
	f_{h_j}\left(t_{j,\tau},\psi_j^\tau(X(t_{j-1}))\right)
	\right\|_{L^2(\Omega)}
	\le
	Ch_j .
	\]
	Consequently,
	\begin{equation}
		\label{eq:Pj-L2-bound}
		\|\mathcal P_j\|_{L^2(\Omega)}
		\le
		Ch_j^2 .
	\end{equation}
	
	Finally, by Lemma~\ref{lem:milstein-residual},
	\begin{equation}
		\label{eq:Mj-L2-bound}
		\|\mathcal M_j\|_{L^2(\Omega)}
		\le
		Ch_j^{3/2},
	\end{equation}
	and
	\begin{equation}
		\label{eq:Mj-conditional-zero}
		\mathbb E[\mathcal M_j\mid\mathcal F_{j-1}^h]=0 .
	\end{equation}
	
	Combining \eqref{eq:Qj-L2-bound}, \eqref{eq:Sj-L2-bound},
	\eqref{eq:Pj-L2-bound}, and \eqref{eq:Mj-L2-bound}, we obtain
	\[
	\|\rho_j\|_{L^2(\Omega)}
	\le
	Ch_j^{3/2}.
	\]
	
	It remains to estimate the conditional mean. By
	\eqref{eq:Qj-conditional-zero} and \eqref{eq:Mj-conditional-zero},
	\[
	\mathbb E[\rho_j\mid\mathcal F_{j-1}^h]
	=
	\mathbb E[\mathcal S_j+\mathcal P_j\mid\mathcal F_{j-1}^h].
	\]
	Therefore, by Jensen's inequality for conditional expectations,
	\[
	\begin{aligned}
		\left\|
		\mathbb E[\rho_j\mid\mathcal F_{j-1}^h]
		\right\|_{L^2(\Omega)}
		&\le
		\left\|
		\mathbb E[\mathcal S_j\mid\mathcal F_{j-1}^h]
		\right\|_{L^2(\Omega)}
		+
		\left\|
		\mathbb E[\mathcal P_j\mid\mathcal F_{j-1}^h]
		\right\|_{L^2(\Omega)}
		\\
		&\le
		\|\mathcal S_j\|_{L^2(\Omega)}
		+
		\|\mathcal P_j\|_{L^2(\Omega)}
		\\
		&\le
		Ch_j^2 .
	\end{aligned}
	\]
	This proves the lemma.
\end{proof}

We are now ready to prove the main convergence result. The proof combines the
one-step mean-square stability estimate with the local residual bounds derived
above. More precisely, the local residual estimate provides the consistency
error of one step initialized at the exact solution, while the one-step
stability estimate allows this local error to be propagated along the temporal
grid. A discrete Gronwall argument then yields the global strong convergence
order.
\begin{theorem}[Strong convergence of order one in \(L^2\)]
	\label{thm:main-convergence}
	Let Assumptions~\ref{ass:X0}--\ref{ass:g} hold
	and \(X\) be the exact solution of \eqref{eq:sde}. Let
	\((X_h^j)_{j=0}^N\) be generated by the projected drift-randomized Milstein
	method \eqref{eq:Xtau}-\eqref{eq:scheme}. Then there exists a constant \(C>0\), independent of \(h\), such that
	\begin{equation}
		\label{eq:main-convergence-final}
		\max_{0\le j\le N}
		\|X(t_j)-X_h^j\|_{L^2(\Omega)}
		\le
		Ch.
	\end{equation}
\end{theorem}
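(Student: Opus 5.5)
The plan is the standard Milstein-type argument: one-step stability plus local consistency, closed by a discrete Gronwall inequality. Set $e_j:=X(t_j)-X_h^j$, so $e_0=0$. By the scheme \eqref{eq:scheme} and the definition \eqref{eq:one-step-Phi-def} of $\Phi_j$, we have $X_h^j=X_h^{j-1}+\Phi_j(X_h^{j-1})$. The definition \eqref{eq:local-residual-def} gives $X(t_j)=X(t_{j-1})+\Phi_j(X(t_{j-1}))+\rho_j$. Hence
\[
e_j=\bigl[e_{j-1}+\Phi_j(X(t_{j-1}))-\Phi_j(X_h^{j-1})\bigr]+\rho_j=:U_j+\rho_j .
\]
We apply Lemma~\ref{lem:one-step-stability} with $Y=X(t_{j-1})$ and $Z=X_h^{j-1}$. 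Both are $\mathcal F_{j-1}^h$-measurable, and their moments of all orders are bounded uniformly in $h$ by Lemma~\ref{lem:exact-moment} and Lemma~\ref{lem:numerical-moment}. This yields
\[
\mathbb E|U_j|^2\le(1+Ch_j)\mathbb E|e_{j-1}|^2+Ch_j^3,
\qquad
\mathbb E|\Phi_j(Y)-\Phi_j(Z)|^2\le Ch_j\,\mathbb E|e_{j-1}|^2 .
\]

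Next I expand $\mathbb E|e_j|^2=\mathbb E|U_j|^2+2\mathbb E\langle U_j,\rho_j\rangle+\mathbb E|\rho_j|^2$. The last term is at most $Ch_j^3$ by \eqref{eq:local-residual-L2}. For the cross term I split $U_j=e_{j-1}+(\Phi_j(Y)-\Phi_j(Z))$. Because $e_{j-1}$ is $\mathcal F_{j-1}^h$-measurable,
\[
\mathbb E\langle e_{j-1},\rho_j\rangle=\mathbb E\langle e_{j-1},\mathbb E[\rho_j\mid\mathcal F_{j-1}^h]\rangle\le \|e_{j-1}\|_{L^2}\,Ch_j^2\le Ch_j\|e_{j-1}\|_{L^2}^2+Ch_j^3,
\]
where the first bound uses \eqref{eq:local-residual-mean} and the second uses Young's inequality. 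For the remaining part, Cauchy--Schwarz together with the increment stability bound and \eqref{eq:local-residual-L2} give
\[
\mathbb E\langle\Phi_j(Y)-\Phi_j(Z),\rho_j\rangle\le Ch_j^{1/2}\|e_{j-1}\|_{L^2}\,h_j^{3/2}\le Ch_j\|e_{j-1}\|_{L^2}^2+Ch_j^3 .
\]
Collecting the terms,
\[
\mathbb E|e_j|^2\le(1+Ch_j)\mathbb E|e_{j-1}|^2+Ch_j^3 .
\]

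Iterating this with $\prod_j(1+Ch_j)\le e^{CT}$ and $\sum_j h_j^3\le h^2\sum_j h_j=Th^2$ gives $\max_j\mathbb E|e_j|^2\le Ch^2$. Taking square roots yields \eqref{eq:main-convergence-final}.

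The step I expect to be the main obstacle is keeping the constants uniform in $h$. The constant in Lemma~\ref{lem:one-step-stability} depends on the moment bounds of $Y$ and $Z$, so I must check that Lemma~\ref{lem:numerical-moment} supplies these uniformly in $h$ and $j$ for every order $q$. The choice of $\ell$ in the projection tail estimates is fixed once, depending only on $\alpha$ and $\kappa$, so this part should be fine. The genuinely delicate point is the cross term $\langle e_{j-1},\rho_j\rangle$: a crude Cauchy--Schwarz bound there would only give order $1/2$. It therefore has to be handled through the conditional mean bound \eqref{eq:local-residual-mean}, exactly as above.
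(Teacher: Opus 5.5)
Your proposal is correct and matches the paper's proof step for step. The shared ingredients are:
\begin{itemize}
\item the splitting $e_j=\eta_j+\rho_j$ (your $U_j$ is the paper's $\eta_j$);
\item Lemma~\ref{lem:one-step-stability} applied with $Y=X(t_{j-1})$ and $Z=X_h^{j-1}$;
\item for the cross term, the conditional-mean bound \eqref{eq:local-residual-mean} on the $e_{j-1}$ part, and Cauchy--Schwarz with \eqref{eq:increment-difference-stability} and \eqref{eq:local-residual-L2} on the increment-difference part;
\item the final discrete Gronwall step.
\end{itemize}
You are also right that a crude Cauchy--Schwarz bound on $\mathbb E\langle e_{j-1},\rho_j\rangle$ would lose half an order, which is why the paper proves the sharper conditional-mean estimate in Lemma~\ref{lem:local-residual}.
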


\begin{proof}
	Let
	\[
	e_j:=X(t_j)-X_h^j,
	\qquad j=0,1,\ldots,N.
	\]
	By Lemma~\ref{lem:local-residual}, the exact solution satisfies
	\[
	X(t_j)
	=
	X(t_{j-1})
	+
	\Phi_j(X(t_{j-1}))
	+
	\rho_j ,
	\]
where $\rho_j$ is defined by \eqref{eq:local-residual-def}.	On the other hand, recall the definition of \(\Phi_j(\cdot)\) in~\eqref{eq:one-step-Phi-def}, the numerical scheme can be written as
	\[
	X_h^j
	=
	X_h^{j-1}
	+
	\Phi_j(X_h^{j-1}).
	\]
	Hence
	\[
	e_j
	=
	e_{j-1}
	+
	\Phi_j(X(t_{j-1}))
	-
	\Phi_j(X_h^{j-1})
	+
	\rho_j .
	\]
	Set
	\[
	\eta_j
	:=
	e_{j-1}
	+
	\Phi_j(X(t_{j-1}))
	-
	\Phi_j(X_h^{j-1}).
	\]
	Then
	\[
	e_j=\eta_j+\rho_j.
	\]
	The moment bounds from Lemmas~\ref{lem:exact-moment} and~\ref{lem:numerical-moment} guarantees that we can apply  Lemma~\ref{lem:one-step-stability} with
	\(Y=X(t_{j-1})\) and \(Z=X_h^{j-1}\). Hence, from \eqref{eq:one-step-stability} in Lemma~\ref{lem:one-step-stability} we obtain
	\begin{equation}
		\label{eq:conv-eta-j}
		\mathbb E\left[|\eta_j|^2\right]
		\le
		(1+Ch_j)\mathbb E\left[|e_{j-1}|^2\right]
		+
		Ch_j^3 .
	\end{equation}
	Moreover, by \eqref{eq:increment-difference-stability} in Lemma~\ref{lem:one-step-stability}, we can derive
	\begin{equation}
		\label{eq:conv-Phi-diff}
		\mathbb E
		\left[\left|
		\Phi_j(X(t_{j-1}))
		-
		\Phi_j(X_h^{j-1})
		\right|^2\right]
		\le
		Ch_j\mathbb E\left[|e_{j-1}|^2\right] .
	\end{equation}
	Expanding \(e_j=\eta_j+\rho_j\), we get
\begin{equation}\label{eq:expansion-ej}
	\mathbb E\left[|e_j|^2\right]
=
\mathbb E\left[|\eta_j|^2\right]
+
2\mathbb E[\langle \eta_j,\rho_j\rangle]
+
\mathbb E\left[|\rho_j|^2\right] .
\end{equation}
	We estimate the cross term. By the definition of $\eta_j$,	we have
\begin{equation}\label{eq:product-eta-rho}
	\mathbb E[\langle \eta_j,\rho_j\rangle]
	=
	\mathbb E[\langle e_{j-1},\rho_j\rangle]
	+
	\mathbb E\left[\langle
	\Phi_j(X(t_{j-1}))
	-
	\Phi_j(X_h^{j-1})
	,\rho_j\rangle
	\right].
\end{equation}
	For the first term, since \(e_{j-1}\) is
	\(\mathcal F_{j-1}^h\)-measurable, the tower property of conditional expectation
	gives
	\[
	\mathbb E[\langle e_{j-1},\rho_j\rangle]
	=
	\mathbb E\left[
 \langle e_{j-1},\mathbb E\left[ \rho_j\mid\mathcal F_{j-1}^h\right]\rangle
	\right].
	\]
	Therefore, by the Cauchy--Schwarz inequality and
	\eqref{eq:local-residual-mean} in Lemma~\ref{lem:local-residual},
$$
	\begin{aligned}
		|\mathbb E\left[\langle e_{j-1},\rho_j\rangle\right]|
		&=
		\left|
		\mathbb E\left[
		\langle e_{j-1},\mathbb E\left[\rho_j\mid\mathcal F_{j-1}^h\right]\rangle
		\right]
		\right|
		\\
		&\le
		\|e_{j-1}\|_{L^2(\Omega)}
		\left\|
		\mathbb E\left[\rho_j\mid\mathcal F_{j-1}^h\right]
		\right\|_{L^2(\Omega)}
		\\
		&\le
		Ch_j^2\|e_{j-1}\|_{L^2(\Omega)}.
	\end{aligned}
$$
	Finally, by Young's inequality,
\begin{equation}\label{eq:product-ej-rho}
	|\mathbb E\left[\langle e_{j-1},\rho_j\rangle\right]|\leq Ch_j^2\|e_{j-1}\|_{L^2(\Omega)}
	\le
	Ch_j\|e_{j-1}\|_{L^2(\Omega)}^2
	+
	Ch_j^3
	=
	Ch_j\mathbb E\left[|e_{j-1}|^2\right]
	+
	Ch_j^3.
\end{equation}
	For the second term, by the Cauchy--Schwarz inequality, we have
	\[
	\begin{aligned}
		&
		\left|
		\mathbb E\left[\langle
		\Phi_j(X(t_{j-1}))
		-
		\Phi_j(X_h^{j-1})
		,\rho_j\rangle
		\right]
		\right|\le
		\left\|
		\Phi_j(X(t_{j-1}))
		-
		\Phi_j(X_h^{j-1})
		\right\|_{L^2(\Omega)}
		\|\rho_j\|_{L^2(\Omega)}.
	\end{aligned}
	\]
	Therefore, by \eqref{eq:conv-Phi-diff} and \eqref{eq:local-residual-L2} in Lemma \eqref{lem:local-residual}
\begin{equation}\label{eq:product-Psi-rho}
	\begin{aligned}
	\left|
	\mathbb E\left[\langle
	\Phi_j(X(t_{j-1}))
	-
	\Phi_j(X_h^{j-1})
	,\rho_j\rangle
	\right]
	\right|
	&\le
	Ch_j^{1/2}\|e_{j-1}\|_{L^2(\Omega)}\cdot Ch_j^{3/2}
	\\
	&=
	Ch_j^2\|e_{j-1}\|_{L^2(\Omega)}
	\\
	&\le
	Ch_j\mathbb E\left[|e_{j-1}|^2\right]+Ch_j^3,
\end{aligned}
\end{equation}
	where the last step follows from Young's inequality.
	Furthermore, Lemma~\ref{lem:local-residual} gives
\begin{equation}\label{eq:estimate-rho}
	\mathbb E\left[|\rho_j|^2\right]\le Ch_j^3.
\end{equation}
	Subscribing these estimates~\eqref{eq:product-eta-rho}-\eqref{eq:product-Psi-rho}, \eqref{eq:conv-eta-j} and \eqref{eq:estimate-rho} into \eqref{eq:expansion-ej}, we arrive at
	\[
	\mathbb E[|e_j|^2]
	\le
	(1+Ch_j)\mathbb E\left[|e_{j-1}|^2\right]
	+
	Ch_j^3 .
	\]
	Since \(e_0=0\), the discrete Gronwall lemma yields
	\[
	\mathbb E\left[|e_j|^2\right]
	\le
	C\sum_{i=1}^j h_i^3
	\le
	Ch^2\sum_{i=1}^j h_i
	\le
	Ch^2 .
	\]
	Taking square roots gives
	\[
	\|X(t_j)-X_h^j\|_{L^2(\Omega)}
	\le
	Ch,
	\qquad j=0,1,\ldots,N.
	\]
	Therefore,
	\[
	\max_{0\le j\le N}
	\|X(t_j)-X_h^j\|_{L^2(\Omega)}
	\le
	Ch.
	\]
	This proves \eqref{eq:main-convergence-final}.
\end{proof}
	\section{Numerical experiment}\label{sec:numerical-experiment}
	
	In this section, we will use three examples to validate our theoretical result. The first example is a one-dimensional SDE.
\subsection{Example 1}\label{EX1}

We first consider the scalar SDE
\begin{equation}
	\label{eq:test-sde-nondiff}
	dX(t)
	=
	\left(
	-X(t)^3+|X(t)|+a|\sin(\omega t)|
	\right)dt
	+
	\sigma\sin(X(t))\,dW(t),
	\qquad t\in[0,T],
\end{equation}
with initial value \(X(0)=1\). In this experiment, we choose
\[
T=1,\qquad
a=0.25,\qquad
\omega=20\pi,\qquad
\sigma=0.15.
\]
The drift and diffusion coefficients are therefore given by
\[
f(t,x)=-x^3+|x|+a|\sin(\omega t)|,
\qquad
g(x)=\sigma\sin(x).
\]

We briefly verify that this test equation satisfies the assumptions of
Theorem~\ref{thm:main-convergence}. The drift coefficient is not differentiable
with respect to \(x\) at \(x=0\). Moreover, for all \(x,y\in\mathbb R\),
\[
|f(t,x)-f(t,y)|
\le
C\left(1+|x|^2+|y|^2\right)|x-y|,
\]
and
\[
|f(t,x)|
\le
C(1+|x|^3).
\]
Thus the polynomial Lipschitz condition holds with exponent \(\kappa=2\).

The dissipative cubic term also implies a one-sided Lipschitz condition.
Indeed,
\[
\begin{aligned}
	(x-y)\bigl(f(t,x)-f(t,y)\bigr)
	&=
	-(x-y)^2(x^2+xy+y^2)
	+
	(x-y)\bigl(|x|-|y|\bigr)
	\\
	&\le
	|x-y|^2.
\end{aligned}
\]
Furthermore, for \(s,t\in[0,T]\),
\[
\begin{aligned}
	|f(t,x)-f(s,x)|
	&=
	a\left||\sin(\omega t)|-|\sin(\omega s)|\right|
	\\
	&\le
	a|\sin(\omega t)-\sin(\omega s)|
	\\
	&\le
	a\omega|t-s|
	\\
	&\le
	C(1+|x|^3)|t-s|^{1/2}.
\end{aligned}
\]
Hence the required temporal regularity condition is satisfied.

The diffusion coefficient \(g(x)=\sigma\sin(x)\) is globally Lipschitz, and its
derivative \(g'(x)=\sigma\cos(x)\) is bounded and globally Lipschitz. Moreover,
the Milstein coefficient
\[
g(x)g'(x)=\sigma^2\sin(x)\cos(x)
\]
is also globally Lipschitz. Since the diffusion coefficient is independent of
time, its temporal regularity condition is trivially satisfied. Therefore,
\eqref{eq:test-sde-nondiff} satisfies all the assumptions of
Theorem~\ref{thm:main-convergence}.

Since \(r=2\), we take the projection parameter
\[
\alpha=0.15<\frac{1}{2(\kappa+1)}=\frac16.
\]

To examine the strong convergence rate, we compute the root mean square error
at the terminal time \(T\),
\begin{equation}
	\label{eq:error-criterion}
	\mathrm{RMSE}(h)
	=
	\left(
	\frac1M
	\sum_{i=1}^M
	\left|X_h^{(i)}(T)-X_{\mathrm{ref}}^{(i)}(T)\right|^2
	\right)^{1/2}.
\end{equation}
Here \(M=2000\) sample paths are used. The reference solution
\(X_{\mathrm{ref}}\) is computed by the PRM method with the reference step size
\[
h_{\mathrm{ref}}=2^{-14},
\]
whereas the coarse step sizes are chosen as
\[
h=2^{-i},\qquad i=4,5,6,7,8.
\]
For each sample path, the coarse and reference solutions are driven by the same
Brownian path. The numerical results are shown in Fig.~\ref{E1}. The observed
convergence rate is \(1.206\), which is consistent with the theoretical
first-order convergence result.

\begin{figure}[!ht]
	\centering
	\includegraphics[width=100mm]{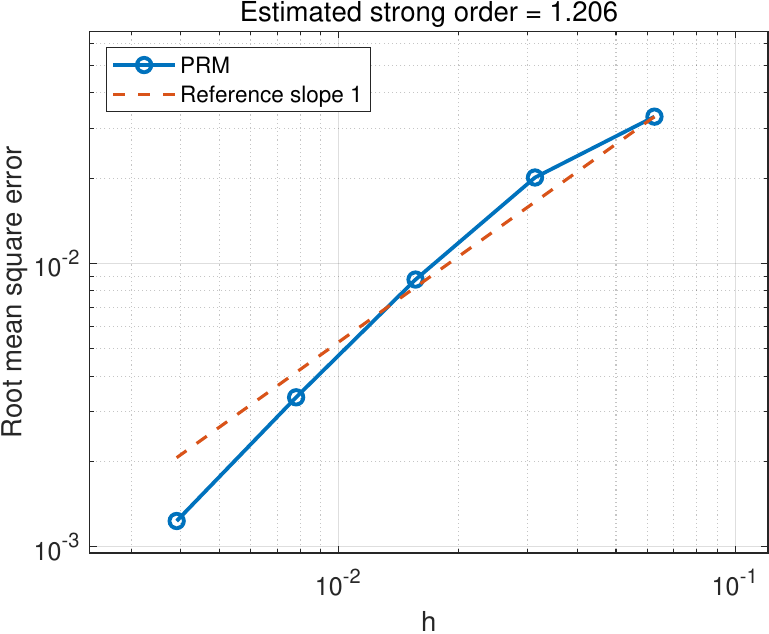}
	\caption{Strong convergence of the PRM method for the scalar SDE
		\eqref{eq:test-sde-nondiff}.}
	\label{E1}
\end{figure}

\subsection{Example 2}\label{Ex:2}

Although all three test equations satisfy the temporal regularity condition in
Assumption~\ref{ass:drift}, the role of this condition is different in the
present example. In \eqref{eq:test-sde-nondiff} and
\eqref{eq:fhn-example}, the time-dependent terms are Lipschitz continuous.
Hence, their \(1/2\)-H\"older estimates follow from the stronger Lipschitz
regularity. In contrast, the present example is designed to test the critical
\(1/2\)-H\"older setting, in which drift randomization is expected to improve
the convergence behavior of the classical left-point drift approximation.

We compare the proposed PRM method with the classical projected Milstein method
\cite{beyn2017stochastic}. To construct a drift coefficient which is smooth
with respect to the state variable but has critical temporal regularity, we use
a Takagi--Landsberg type function with Hurst parameter \(H=1/2\). Such functions
are classical examples of deterministic rough functions and have been studied
in connection with fractal regularity, \(p\)-variation, and pathwise calculus;
see, for example, \cite{mishura2019signed}. This choice is also consistent with
the motivation of drift-randomized Milstein methods, where randomized
quadrature is used to approximate drift integrals with H\"older-continuous time
dependence \cite{KruWu2019}.

Define
\begin{equation}
	\label{eq:eta}
	\eta(t)
	=
	\sum_{k=0}^{\infty}
	2^{-k/2}\operatorname{dist}(2^kt,\mathbb Z),
	\qquad
	\operatorname{dist}(u,\mathbb Z)
	=
	\min_{n\in\mathbb Z}|u-n|.
\end{equation}
We first verify the temporal regularity of \(\eta\). Since the mapping
\[
u\longmapsto\operatorname{dist}(u,\mathbb Z)
\]
is globally Lipschitz with Lipschitz constant one and satisfies
\[
0\le \operatorname{dist}(u,\mathbb Z)\le\frac12,
\]
we have, for all \(s,t\in[0,T]\),
\[
\left|
\operatorname{dist}(2^kt,\mathbb Z)
-
\operatorname{dist}(2^ks,\mathbb Z)
\right|
\le
\min\left\{2^k|t-s|,\frac12\right\}.
\]
Consequently,
\[
|\eta(t)-\eta(s)|
\le
\sum_{k=0}^{\infty}
2^{-k/2}
\min\left\{2^k|t-s|,\frac12\right\}.
\]
Set \(\delta=|t-s|\). The case \(\delta=0\) is trivial. For
\(0<\delta\le1\), choose \(n\in\mathbb N_0\) such that
\[
2^{-(n+1)}<\delta\le2^{-n}.
\]
It follows that
\[
\begin{aligned}
	|\eta(t)-\eta(s)|
	&\le
	\delta\sum_{k=0}^{n}2^{k/2}
	+
	\frac12\sum_{k=n+1}^{\infty}2^{-k/2}
	\\
	&\le
	C\delta 2^{n/2}
	+
	C2^{-n/2}
	\\
	&\le
	C\delta^{1/2}.
\end{aligned}
\]
Therefore,
\begin{equation}
	\label{eq:eta-holder}
	|\eta(t)-\eta(s)|
	\le
	C|t-s|^{1/2},
	\qquad s,t\in[0,T].
\end{equation}

Moreover, the exponent \(1/2\) is critical. Indeed, let \(t_n=2^{-n}\).
Since \(\eta(0)=0\), we have
\[
\begin{aligned}
	\eta(t_n)
	&=
	\sum_{k=0}^{n-1}
	2^{-k/2}\operatorname{dist}(2^{k-n},\mathbb Z)
	\\
	&=
	2^{-n}\sum_{k=0}^{n-1}2^{k/2}
	\ge
	2^{-(n+1)/2}.
\end{aligned}
\]
Hence, for every \(\nu>1/2\),
\[
\frac{|\eta(t_n)-\eta(0)|}{|t_n|^\nu}
\ge
2^{-1/2}2^{n(\nu-1/2)}
\longrightarrow\infty
\qquad\text{as }n\to\infty.
\]
Thus, \(\eta\) is not \(\nu\)-H\"older continuous for any
\(\nu>1/2\). In particular, it is not Lipschitz continuous.

We consider the scalar SDE
\begin{equation}
	\label{eq:test-sde-holder-time}
	dX(t)
	=
	\left[
	-X(t)^3+X(t)+a\eta(t)
	\right]dt
	+
	\sigma\sin(X(t))\,dW(t),
	\qquad t\in[0,T],
\end{equation}
with initial value \(X(0)=1\).

For the numerical implementation, the infinite series \(\eta\) is approximated
by its truncated version
\begin{equation}
	\label{eq:eta-K}
	\eta_K(t)
	=
	\sum_{k=0}^{K}
	2^{-k/2}\operatorname{dist}(2^kt,\mathbb Z).
\end{equation}
The truncation error satisfies
\[
\begin{aligned}
	\sup_{t\in[0,T]}|\eta(t)-\eta_K(t)|
	&\le
	\frac12\sum_{k=K+1}^{\infty}2^{-k/2}
	\\
	&=
	\frac{2^{-(K+1)/2}}
	{2(1-2^{-1/2})}.
\end{aligned}
\]
In the experiment, we choose
\[
T=1,\qquad
a=1,\qquad
\sigma=0.15,\qquad
K=36.
\]
For \(K=36\), the above uniform truncation bound is approximately
$
4.61\times10^{-6},
$
which is considerably smaller than the reference step size
\(h_{\mathrm{ref}}=2^{-14}\). Thus, the truncation error is negligible on the
temporal scales considered in the experiment.

We next verify that \eqref{eq:test-sde-holder-time} satisfies the assumptions of
Theorem~\ref{thm:main-convergence}. Its drift coefficient is
\[
f(t,x)=-x^3+x+a\eta(t).
\]
Since \(\eta\) is uniformly bounded on \([0,T]\), we have
\[
|f(t,x)-f(t,y)|
\le
C(1+|x|^2+|y|^2)|x-y|
\]
and
\[
|f(t,x)|
\le
C(1+|x|^3).
\]
Moreover,
\[
\begin{aligned}
	(x-y)\bigl(f(t,x)-f(t,y)\bigr)
	&=
	-(x-y)^2(x^2+xy+y^2)+(x-y)^2
	\\
	&\le
	|x-y|^2.
\end{aligned}
\]
Thus, the polynomial Lipschitz and one-sided Lipschitz conditions hold with
\(\kappa=2\).

By \eqref{eq:eta-holder},
\[
\begin{aligned}
	|f(t,x)-f(s,x)|
	&=
	a|\eta(t)-\eta(s)|
	\\
	&\le
	C|t-s|^{1/2}
	\\
	&\le
	C(1+|x|^{\kappa+1})|t-s|^{1/2}.
\end{aligned}
\]
Hence, the drift coefficient has precisely the temporal regularity required in
Assumption~\ref{ass:drift}. Unlike the time-dependent terms in \eqref{eq:test-sde-nondiff} and
\eqref{eq:fhn-example}, this temporal dependence is not Lipschitz continuous.

The diffusion coefficient \(g(x)=\sigma\sin(x)\) satisfies the same regularity
conditions as in \eqref{eq:test-sde-nondiff}. Therefore, all assumptions of
Theorem~\ref{thm:main-convergence} are satisfied. We choose
\[
\alpha=0.15
<
\frac{1}{2(\kappa+1)}
=
\frac16.
\]

Using the same error criterion \eqref{eq:error-criterion}, number of sample
paths, reference step size, and coarse step sizes as in the previous example,
we compare the proposed PRM method with the classical projected Milstein
method. In both methods, the function \(\eta\) is evaluated numerically through
the truncation \(\eta_{36}\) in \eqref{eq:eta-K}. The numerical results are
shown in Fig.~\ref{E2}. Over the tested step-size range, the PRM method exhibits
an approximately first-order convergence rate, whereas the classical projected
Milstein method has a lower empirical rate. This comparison illustrates the
advantage of randomized drift quadrature in the critical \(1/2\)-H\"older
temporal regularity regime.

\begin{figure}[!ht]
	\centering
	\includegraphics[width=100mm]{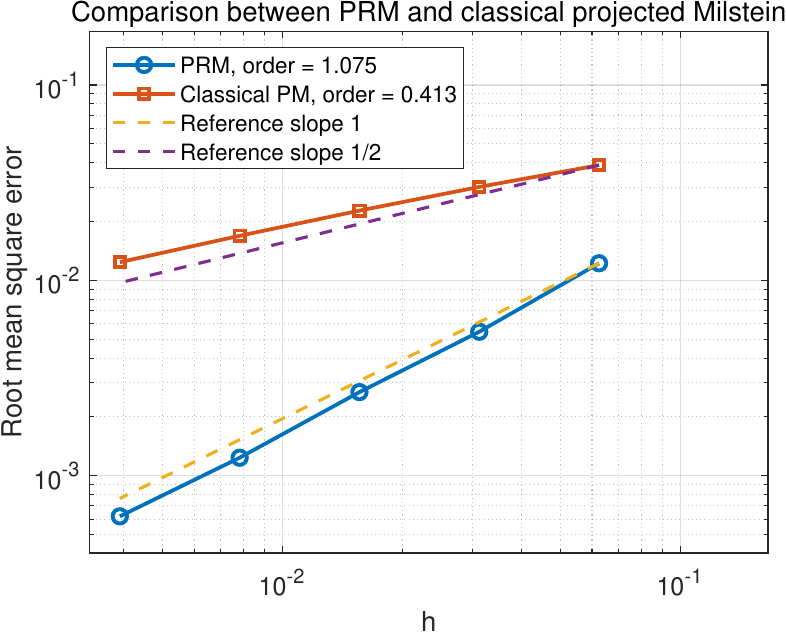}
	\caption{Strong convergence of the PRM and classical PM methods for the
		scalar SDE \eqref{eq:test-sde-holder-time}, where the Takagi--Landsberg
		function \(\eta\) is approximated by \(\eta_{36}\).}
	\label{E2}
\end{figure}

\subsection{Example 3}\label{Ex:3}

The third example is used to examine the applicability of the proposed PRM
method to a practically motivated non-differentiable drift coefficient. It is
motivated by FitzHugh--Nagumo type neuron models, which have also been used as
test problems in the numerical analysis of Milstein-type methods; see, for
example, \cite{biswas2026randomized}. The FitzHugh--Nagumo system is a classical
reduced model for excitable dynamics of nerve membranes
\cite{FitzHugh1961,Nagumo1962}, where the first component represents the
membrane potential and the second component represents a recovery variable.
Piecewise-linear and stochastic variants of this model have also been studied
in the literature; see, for example, \cite{SimpsonKuske2010}.

Motivated by these models, we consider a nonsmooth FitzHugh--Nagumo type system
with a threshold activation term. More precisely, we add the nonsmooth term
\((V-b)^+\) to the cubic FitzHugh--Nagumo drift, where
\[
(V-b)^+:=\max\{V-b,0\}.
\]
This term models an additional activation effect once the membrane potential
exceeds the threshold \(b\), and it is not differentiable at \(V=b\).
We consider
\begin{equation}
	\label{eq:fhn-example}
	\begin{aligned}
		dV(t)
		&=
		\left[
		-V(t)^3+V(t)-R(t)
		+\beta(V(t)-b)^+
		+a\sin(\omega t)
		\right]dt
		+\sigma\sin(V(t))\,dW(t),
		\\
		dR(t)
		&=
		\varepsilon
		\left(
		V(t)+\gamma-\lambda R(t)
		\right)dt,
		\qquad t\in[0,T].
	\end{aligned}
\end{equation}

In the numerical experiment, we take
\[
T=1,\qquad
X(0)=(V(0),R(0))^\top=(0.20,0)^\top,
\]
and choose
\[
\sigma=0.6,\qquad
\beta=0.5,\qquad
b=0.2,\qquad
a=0.5,\qquad
\omega=2\pi,
\]
\[
\varepsilon=0.08,\qquad
\gamma=0.5,\qquad
\lambda=1.
\]

We briefly verify that \eqref{eq:fhn-example} satisfies the assumptions of
Theorem~\ref{thm:main-convergence}. Let
\[
X=(V,R)^\top,
\qquad
Y=(\bar V,\bar R)^\top.
\]
The drift and diffusion coefficients are
\[
F(t,X)
=
\begin{pmatrix}
	-V^3+V-R+\beta(V-b)^++a\sin(\omega t)
	\\
	\varepsilon(V+\gamma-\lambda R)
\end{pmatrix},
\qquad
G(X)
=
\begin{pmatrix}
	\sigma\sin V\\
	0
\end{pmatrix}.
\]
The map \(V\mapsto(V-b)^+\) is globally Lipschitz but not differentiable at
\(V=b\). Consequently, \(F\) is not differentiable with respect to the state
variable on the hyperplane
\[
\{(V,R)\in\mathbb R^2:V=b\}.
\]

Using
\[
|V^3-\bar V^3|
\le
C(1+|V|^2+|\bar V|^2)|V-\bar V|
\]
and
\[
|(V-b)^+-(\bar V-b)^+|
\le
|V-\bar V|,
\]
we obtain
\[
|F(t,X)-F(t,Y)|
\le
C(1+|X|^2+|Y|^2)|X-Y|,
\]
and
\[
|F(t,X)|\le C(1+|X|^3).
\]
Thus the polynomial Lipschitz condition holds with \(r=2\).

The one-sided Lipschitz condition follows from
\[
\begin{aligned}
	&\left\langle X-Y,F(t,X)-F(t,Y)\right\rangle
	\\
	&=
	-(V-\bar V)^2(V^2+V\bar V+\bar V^2)
	+(V-\bar V)^2
	\\
	&\quad
	+\beta(V-\bar V)
	\left((V-b)^+-(\bar V-b)^+\right)
	\\
	&\quad
	+(\varepsilon-1)(V-\bar V)(R-\bar R)
	-\varepsilon\lambda(R-\bar R)^2.
\end{aligned}
\]
Since
\[
0
\le
(V-\bar V)
\left((V-b)^+-(\bar V-b)^+\right)
\le
|V-\bar V|^2,
\]
Young's inequality yields
\[
\left\langle X-Y,F(t,X)-F(t,Y)\right\rangle
\le
C|X-Y|^2.
\]

The temporal regularity follows from
\[
\begin{aligned}
	|F(t,X)-F(s,X)|
	&=
	a|\sin(\omega t)-\sin(\omega s)|
	\\
	&\le
	a\omega|t-s|
	\\
	&\le
	C(1+|X|^3)|t-s|^{1/2}.
\end{aligned}
\]
Finally, \(G\) is globally Lipschitz, and its Jacobian is
\[
\partial_XG(X)
=
\begin{pmatrix}
	\sigma\cos V&0\\
	0&0
\end{pmatrix},
\]
which is bounded and globally Lipschitz. The corresponding Milstein coefficient
is
\[
\partial_XG(X)G(X)
=
\begin{pmatrix}
	\sigma^2\sin V\cos V\\
	0
\end{pmatrix},
\]
which is also globally Lipschitz. Since \(G\) is independent of time, its
temporal regularity condition is trivially satisfied. Therefore,
\eqref{eq:fhn-example} satisfies all the assumptions of
Theorem~\ref{thm:main-convergence}.

The projection parameter is chosen as
\[
\alpha=0.15<\frac{1}{2(\kappa+1)}=\frac16.
\]
We use \(M=2500\) sample paths, the reference step size
\[
h_{\mathrm{ref}}=2^{-15},
\]
and the coarse step sizes
\[
h=2^{-i},\qquad i=4,5,6,7,8.
\]
Using the error criterion \eqref{eq:error-criterion}, we compute the RMSE at the
terminal time \(T\). The numerical results are shown in Fig.~\ref{E3}. The
observed convergence rate is \(1.073\), which is consistent with the strong
convergence order proved in Theorem~\ref{thm:main-convergence}. This example
illustrates that the proposed PRM method can be effectively applied to a
practically motivated stochastic FitzHugh--Nagumo type system with a
non-differentiable and super-linearly growing drift coefficient.

\begin{figure}[!ht]
	\centering
	\includegraphics[width=100mm]{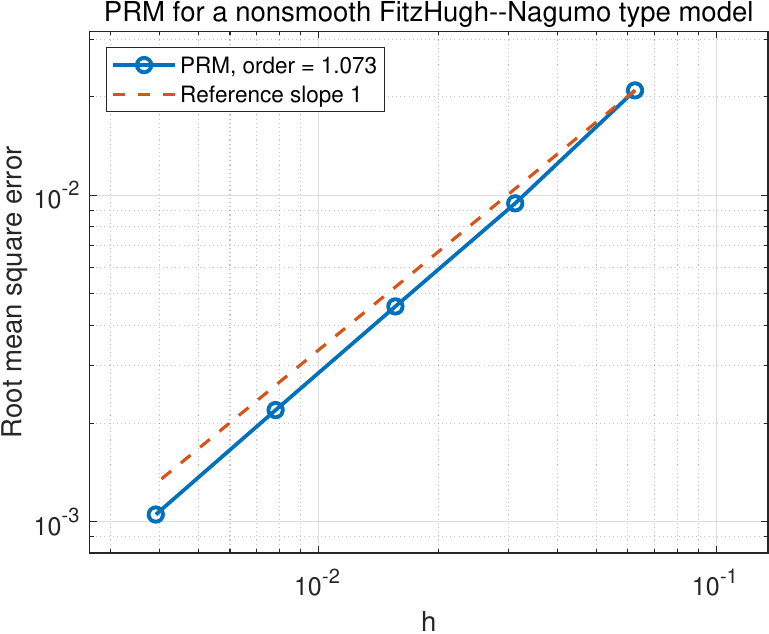}
	\caption{Strong convergence of the PRM method for the nonsmooth
		FitzHugh--Nagumo type system \eqref{eq:fhn-example}.}
	\label{E3}
\end{figure}
	\section*{Conclusion and Discussion}

In this paper, we proposed a projected drift-randomized Milstein method for
stochastic differential equations with non-differentiable and super-linearly
growing drift coefficients. The method combines the drift randomization technique
with a projection applied only to the drift coefficient, which allows us to
control the super-linear growth while preserving the applicability to
non-differentiable drifts.

Under suitable one-sided Lipschitz, polynomial growth, and regularity
assumptions, we proved that the proposed method converges strongly with order
one uniformly over the temporal grid points in the \(L^2\)-sense. The analysis
is based on uniform moment estimates, a one-step mean-square stability argument,
and local residual estimates for the randomized drift and Milstein diffusion
terms.

To the best of our knowledge, this is the first randomized Milstein-type method
that achieves first-order strong convergence for SDEs whose drift coefficients
are both non-differentiable and super-linearly growing. This extends the
applicability of randomized Milstein methods beyond the globally Lipschitz
setting. Moreover, compared with existing Milstein-type methods for SDEs with
super-linearly growing drift coefficients, the proposed method does not require
differentiability of the drift coefficient. Motivated by related works on
randomized numerical methods for more general stochastic systems
\cite{biswas2024explicit,morkisz2021randomized,przybylowicz2024randomized}, it
would be interesting to extend the present approach to other classes of SDEs
with super-linearly growing coefficients, such as McKean--Vlasov SDEs and
jump--diffusion SDEs. These extensions are left for future work.
\section*{Declaration on the Use of Generative AI}

Generative artificial intelligence tools were used in the preparation of this
manuscript for language polishing, improving the clarity of presentation,
assisting with some routine mathematical derivations, and suggesting the
construction and implementation of numerical examples. All mathematical
arguments, assumptions, proofs, numerical algorithms, and computational results
were independently checked and verified by the author. The author takes full
responsibility for the content of the manuscript.
\bibliography{ref}
\bibliographystyle{unsrt}
\end{document}